\documentclass[11pt,a4paper]{article}
\usepackage{amsmath}\usepackage{epsf,amsfonts,amsthm}\usepackage{amscd,amssymb}
\usepackage{xcolor,epic,eepic}\usepackage{epsfig}
\usepackage{fontenc,indentfirst, delarray,amsfonts,amsmath,amssymb}
\usepackage{rotating}
\usepackage{mathdots}
\usepackage[T1]{fontenc}
\usepackage[matrix,arrow,curve]{xy}
\usepackage{amsmath}
\usepackage{amssymb}
\usepackage{amsthm}
\usepackage{amscd}
\usepackage{amsfonts}
\usepackage{graphicx}
\usepackage{fancyhdr}
\usepackage{dsfont,texdraw}
\usepackage{amsmath}\usepackage{epsf,amsfonts,amsthm}\usepackage{amscd,amssymb}
\usepackage{xcolor,epic,eepic}\usepackage{epsfig}
\usepackage{fontenc,indentfirst, delarray,amsfonts,amsmath,amssymb}
\usepackage{rotating}
%%%%%%%%%%%%%%%%%%Riemman Hilbert correspondence%%%%%%%%%%%%%%%%%%%%%%%%%%%%%%%%%%%%%%%%%%%%%%%%%%%%%
\usepackage{amscd}
\usepackage{amsmath}
\usepackage{amsthm}
\usepackage{mathrsfs}
\usepackage{latexsym}
\usepackage{amssymb}
\usepackage{amsfonts}
\theoremstyle{plain}

%\usepackage{ran-thesis}
%%%%%%%%%%% tikz library
%%%%%%%%%%% tikz library
\usepackage{tikz}
\usetikzlibrary{arrows,chains,matrix,positioning,scopes,snakes}
\makeatletter
\tikzset{join/.code=\tikzset{after node path={%
\ifx\tikzchainprevious\pgfutil@empty\else(\tikzchainprevious)%
edge[every join]#1(\tikzchaincurrent)\fi}}}
\makeatother

\tikzset{>=stealth',every on chain/.append style={join},
         every join/.style={->}}
\tikzset{
    %Define standard arrow tip
    >=stealth',
    %Define style for boxes
    punkt/.style={
           rectangle,
           rounded corners,
           draw=black, very thick,
           text width=6.5em,
           minimum height=2em,
           text centered},
    % Define arrow style
    pil/.style={
           ->,
           thick,
           shorten <=2pt,
           shorten >=2pt,}
}

%%%%%%%%%%%%%%%%%%%%%%
\setcounter{tocdepth}{5}
\usepackage[T1]{fontenc}
\setcounter{page}{1}

\tolerance=500 \textwidth15.6cm \textheight22cm \hoffset-1.6cm

\newcommand{\bee}{\begin{enumerate}}
\newcommand{\eee}{\end{enumerate}}
\newcommand{\benn}{\begin{equation*}}
\newcommand{\eenn}{\end{equation*}}
\newcommand{\be}{\begin{equation}}
\newcommand{\ee}{\end{equation}}
\newcommand{\bean}{\begin{eqnarray}}
\newcommand{\eean}{\end{eqnarray}}
\newcommand{\bea}{\begin{eqnarray*}}
\newcommand{\eea}{\end{eqnarray*}}
\newcommand{\w}{\wedge}

\newcommand{\p}{\partial}

\newcommand{\ra}{\rangle}

\newcommand{\Ci}{C^{\infty}}

\newcommand{\N}{\mathbb{N}}
\newcommand{\Z}{\mathbb{Z}}
\newcommand{\R}{\mathbb{R}}
\newcommand{\K}{\mathbb{K}}

\newcommand{\Q}{\mathbb{Q}}

\newcommand{\lp}{\left(}
\newcommand{\rp}{\right)}

\newcommand{\op}[1]{\!\!\mathop{\rm ~#1}\nolimits}

\newcommand{\mbi}{\mathbb{I}}

%\newcommand{\nc}{\newcommand}
%\nc{\rnc}{\renewcommand} \setlength{\unitlength}{1ex}
%\linethickness{.15ex}
\mathchardef\za="710B  %\alpha
\mathchardef\zb="710C  %\beta
\mathchardef\zg="710D  %\gamma
\mathchardef\zd="710E  %\delta
\mathchardef\zve="710F %\epsilon
\mathchardef\zz="7110  %\zeta
\mathchardef\zh="7111  %\eta
\mathchardef\zy="7112 %\theta

\mathchardef\zi="7113  %\iota
\mathchardef\zk="7114  %\kappa
\mathchardef\zl="7115  %\lambda
\mathchardef\zm="7116  %\mu
\mathchardef\zn="7117  %\nu
\mathchardef\zx="7118  %\xi
\mathchardef\zp="7119  %\pi
\mathchardef\zr="711A  %\rho
\mathchardef\zs="711B  %\sigma
\mathchardef\zt="711C  %\tau
\mathchardef\zu="711D  %\upsilon
\mathchardef\zf="711E %\phi
\mathchardef\zq="711F  %\chi
\mathchardef\zc="7120  %\psi
\mathchardef\zw="7121  %\omega
\mathchardef\ze="7122  %\varepsilon
\mathchardef\zvy="7123  %\vartheta
\mathchardef\zvw="7124  %\varomega
\mathchardef\zvr="7125 %\varrho
\mathchardef\zvs="7126 %\varsigma
\mathchardef\zvf="7127  %\varphi
\mathchardef\zG="7000  %\Gamma
\mathchardef\zD="7001  %\Delta
\mathchardef\zY="7002  %\Theta
\mathchardef\zL="7003  %\Lambda
\mathchardef\zX="7004  %\Xi
\mathchardef\zP="7005  %\Pi
\mathchardef\zS="7006  %\Sigma
\mathchardef\zU="7007  %\Upsilon
\mathchardef\zF="7008  %\Phi
\mathchardef\zW="700A  %\Omega

\newcommand{\cyclic}{\mathop{\kern0.9ex{{+}
\kern-2.15ex\raise-.25ex\hbox{\Large\hbox{$\circlearrowright$}}}}\limits}

\newcommand{\cE}{{\cal E}}
 \newcommand{\cS}{{\cal S}}

 \newcommand{\cH}{{\cal H}}
 \newcommand{\cP}{{\cal P}}
 
 \newcommand{\cA}{{\cal A}}
 \newcommand{\cM}{{\cal M}}
 \newcommand{\cD}{{\cal D}}
 \newcommand{\cO}{{\cal O}}

 \newcommand{\cQ}{{\cal Q}}
 \newcommand{\cI}{{\cal I}}

\newtheorem{rem}{Remark}
\newtheorem{theo}{Theorem}
\newtheorem{prop}{Proposition}
\newtheorem{lem}{Lemma}
\newtheorem{cor}{Corollary}

\newtheorem{defi}{Definition}

\newcommand{\h}{\op{Hom}}
\newcommand{\0}{\otimes}
\newcommand{\tc}{{\tt C}}
\newcommand{\id}{\op{id}}
\newcommand{\coker}{\op{coker}}

%%%%%FOR ARTICLE MAPPING SPACE%%%%%%%%%%%%
\DeclareMathAlphabet{\mathpzc}{OT1}{pzc}{m}{it}

 \newcommand{\colim}{\op{colim}}
%%%%%%%%%%%%%%%%%%%%%%%%%%%%%%%%%%%%%%%%%%%%

\pagestyle{myheadings}\markright{Model structure in $\tt DG\cD A$}

%%%%%%%%%%%%%%%%%%%%%%%%%%%%%%%%%%%%%%%%%%
\begin{document}
\title{\bf Model structure on differential graded commutative algebras over the ring of differential operators}
\date{}
\author{Gennaro di Brino, Damjan Pi\v{s}talo, and Norbert Poncin\footnote{University of Luxembourg, Mathematics Research Unit, 1359 Luxembourg City, Luxembourg, gennaro.dibrino@gmail.com, damjan.pistalo@uni.lu, norbert.poncin@uni.lu}}

\maketitle

\begin{abstract} We construct a cofibrantly generated model structure on the category of differential non-negatively graded quasi-coherent commutative $\cD_X$-algebras, where $\cD_X$ is the sheaf of differential operators of a smooth affine algebraic variety $X$. %The paper contains an extensive appendix on $\cD$-modules, sheaves versus global sections, some more technical model categorical issues, as well as on relative Sullivan algebras.
This article is the first of a series of works %-- located at the interface of homotopical algebra, algebraic geometry, and mathematical physics --
on a derived $\cD$-geometric approach to the Batalin-Vilkovisky formalism.

\end{abstract}

\vspace{2mm} \noindent {\bf MSC 2010}: 18G55, 16E45, 35A27, 32C38, 16S32

\noindent{\bf Keywords}: Differential operator, model category, chain complex, sheaf, global section, $\cD$-module, commutative monoid, relative Sullivan algebra, transfer theorem
\thispagestyle{empty}

\section{Introduction}

The solution functor of a system of {\it linear} {\small PDE}-s $D\cdot m=0$ is a functor $\op{Sol}:{\tt Mod}(\cal D)\to {\tt Set}$ defined on the category of modules over the ring $\cD$ of (linear) differential operators of a suitable base space: for $D\in\cD$ and $M\in{\tt Mod}(\cD)$, we have $$\op{Sol}(M)=\{m\in M: D\cdot m=0\}\;.$$ For a system of {\it polynomial} {\small PDE}-s, we get (locally) a functor $\op{Sol}:{\tt Alg}(\cD)\to {\tt Set}$ defined on the category of $\cD$-algebras, i.e., commutative monoids in ${\tt Mod}(\cD)$. Just as the solutions of a system of polynomial algebraic equations lead to the concept of algebraic variety, the solutions of a system of {\it nonlinear} {\small PDE}-s are related to diffieties, to $\cal D$-schemes, or to locally representable sheaves $\op{Sol}:{\tt Alg}({\cal D})\to {\tt Set}$. To allow for still more general spaces, sheaves ${\tt Alg}({\cal D})\to {\tt SSet}$ valued in simplicial sets, or sheaves ${\tt DGAlg}({\cal D})\to {\tt SSet}$ on (the opposite of) the category ${\tt DGAlg}({\cal D})$ of differential graded $\cal D$-algebras have to be considered. The latter spaces are referred to as derived ${\cal D}$-stacks. Derived Algebraic $\cal D$-Geometry is expected to be the proper framework for a coordinate-free treatment of the `space of solutions of nonlinear {\small PDE}-s (modulo symmetries)', as well as of the Batalin-Vilkovisky formalism ({\small BV}) for gauge theories. The present paper is the first of a series on covariant derived $\cD$-geometric {\small BV}. The sheaf condition for functors ${\tt DGAlg}({\cal D})\to {\tt SSet}$ appears as the fibrant object condition with respect to a model structure ({\small MS}) on the category of these functors. The appropriate {\small MS} uses both, the model structure on $\tt SSet$ and the model structure on ${\tt DGAlg}(\cD)$. Moreover, the $\cD$-geometric counterpart of an algebra $\Ci(\zS)$ of on-shell functions turns out to be an algebra $A\in{\tt Alg}(\cD)\subset {\tt DGAlg}(\cD)$, so that the Koszul-Tate resolution of $\Ci(\zS)$ corresponds to a suitable cofibrant replacement of $A$ \cite{PP}.\medskip

% Let us first observe that quite often young people who work in this field start publishing a bit later than in other areas. Hence, one of our goals in this series of articles, is to make them accessible, not only for a broad community of researchers, but also for graduate students: we provide quite extensive information to ensure that the papers are perceived as self-contained by various types of people coming from different fields.\medskip

In this first article, we describe a cofibrantly generated model structure on the category ${\tt DGAlg}(\cD)$ of differential non-negatively graded quasi-coherent commutative algebras over the sheaf $\cD$ of differential operators of a smooth affine algebraic variety $X$. This restriction on the underlying space $X$ allows to substitute global sections to sheaves, i.e., to consider the category of differential non-negatively graded commutative algebras over the ring $\cD(X)$ of global sections of $\cD$. The mentioned model structure is constructed, via Quillen's transfer theorem, from the cofibrantly generated projective model structure on the category ${\tt DGMod}(\cD)$ of differential non-negatively graded $\cD(X)$-modules. The latter is obtained from results of \cite{GS} and \cite{Hov}. Since, in contrast with \cite{GS,Hov}, we work over the special (sheaf of) noncommutative ring(s) of differential operators, a careful analysis is needed and some local subtleties cannot be avoided. Further, our restriction to affine varieties is not merely a comfort solution: the existence of a projective model structure requires that the underlying category have enough projectives -- this is in general not the case for a category of sheaves over a not necessarily affine scheme. Eventually, although results that hold over an affine base are of interest by themselves, we also expect them to provide insight into the structure of the main ingredients and hope that the fundamental aspects of the latter still make sense for arbitrary smooth schemes.\medskip

The paper is organized as follows:
\tableofcontents

\section{Conventions and notation}

According to the anglo-saxon nomenclature, we consider the number 0 as being neither positive, nor negative.\medskip

All the rings used in this text are implicitly assumed to be unital.\medskip

In most parts of our paper, the underlying space is a smooth affine algebraic variety.

\section{Sheaves of modules}\label{ShMod}

\newcommand{\cR}{{\cal R}}
\newcommand{\cF}{{\cal F}}

Let $\tt Top$ be the category of topological spaces and, for $X\in \tt Top$, let ${\tt Open}_X$ be the category of open subsets of $X$. If $\cR_X$ is a sheaf of rings, a {\bf left $\cR_X$-module} is a {\it sheaf $\cP_X$, such that, for each $U\in{\tt Open}_X$, $\cP_X(U)$ is an $\cR_X(U)$-module, and the $\cR_X(U)$-actions are compatible with the restrictions}. We denote by ${\tt Mod}(\cR_X)$ the Abelian category of $\cR_X$-modules and of their (naturally defined) morphisms.\medskip

In the following, we omit subscript $X$ if no confusion arises.\medskip

If $\cP,\cQ\in {\tt Mod}(\cR)$, the (internal) Hom ${\cH}om_{\cR}(\cP,\cQ)$ is the sheaf of Abelian groups (of $\cR$-modules, i.e., is the element of ${\tt Mod}(\cR)$, if $\cR$ is commutative) that is defined by \be{\cH}om_{\cR}(\cP,\cQ)(U):=\op{Hom}_{\cR|_U}(\cP|_U,\cQ|_U)\;,\label{HomSh}\ee $U\in{\tt Open}_X$. The {\small RHS} is made of the morphisms of (pre)sheaves of $\cR|_U$-modules, i.e., of the families $\zf_V:\cP(V)\to \cQ(V)$, $V\in{\tt Open}_U$, of $\cR(V)$-linear maps that commute with restrictions. Note that ${\cH}om_{\cR}(\cP,\cQ)$ is a sheaf of Abelian groups, whereas $\op{Hom}_{\cR}(\cP,\cQ)$ is the Abelian group of morphisms of (pre)sheaves of $\cR$-modules. We thus obtain a bi-functor \be\label{HomFun}{\cal H}om_{\cR}(\bullet,\bullet): ({\tt Mod}(\cR))^{\op{op}}\times {\tt Mod}(\cR)\to {\tt Sh}(X)\;,\ee valued in the category of sheaves of Abelian groups, which is left exact in both arguments. \medskip

Further, if $\cP\in{\tt Mod}(\cR^{\op{op}})$ and $\cQ\in{\tt Mod}(\cR)$, we denote by $\cP\otimes_\cR\cQ$ the sheaf of Abelian groups (of $\cR$-modules, if $\cR$ is commutative) associated to the presheaf \be\label{TensSh}(\cP\oslash_\cR\cQ)(U):=\cP(U)\otimes_{\cR(U)}\cQ(U)\;,\ee $U\in{\tt Open}_X$. The bi-functor \be\label{TensFun}\bullet\0_{\cR}\bullet:{\tt Mod}(\cR^{\op{op}})\times{\tt Mod}(\cR)\to {\tt Sh}(X)\;\ee is right exact in its two arguments.\medskip

If $\cS$ is a sheaf of commutative rings and $\cR$ a sheaf of rings, and if $\cS\to\cR$ is a morphism of sheafs of rings, whose image is contained in the center of $\cR$, we say that $\cR$ is a sheaf of $\cS$-algebras. Remark that, in this case, the above functors ${\cH}om_{\cR}(\bullet,\bullet)$ and $\bullet\0_\cR\bullet$ are valued in ${\tt Mod}(\cS)$.

\section{$\cD$-modules and $\cD$-algebras}

Depending on the author(s), the concept of $\cD$-module is considered over a base space $X$ that is a finite-dimensional smooth \cite{Cos} or complex \cite{KS} manifold, or a smooth algebraic variety \cite{HTT} or scheme \cite{BD}, over a fixed base field $\K$ of characteristic zero. We denote by $\cO_X$ (resp., $\Theta_X$, $\cD_X$) the sheaf of functions (resp., vector fields, differential operators acting on functions) of $X$, and take an interest in the category ${\tt Mod}(\cO_X)$ (resp., ${\tt Mod}(\cD_X)$) of $\cO_X$-modules (resp., $\cD_X$-modules).\medskip

Sometimes a (sheaf of) $\cD_X$-module(s) is systematically required to be {\it coherent} or {\it quasi-coherent} as (sheaf of) $\cO_X$-module(s). In this text, we will explicitly mention such extra assumptions.

\subsection{Construction of $\cD$-modules from $\cO$-modules}\label{D-ModulesAlgebras}

It is worth recalling the following

\begin{prop}\label{DModFlatConnSh} Let $\cM_X$ be an $\cO_X$-module. A left $\,\cD_X$-module structure on $\cM_X$ that extends its $\cO_X$-module structure is equivalent to a $\K$-linear morphism
$$\nabla :  \Theta_X \to {\cal E}nd_\K(\cM_X)\;,$$ such that, for all $f\in\cO_X$, $\zy,\zy'\in\Theta_X$, and all $m\in\cM_X$,
\begin{enumerate}
\item $\nabla_{f\zy}\,m = f\cdot\nabla_\zy m\,,$
\item $\nabla_\zy(f\cdot m)=f\cdot\nabla_\zy m+\zy(f)\cdot m\,,$
\item $\nabla_{[\zy,\zy']}m=[\nabla_\zy,\nabla_{\zy'}]m\,.$
\end{enumerate}
\end{prop}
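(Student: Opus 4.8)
The plan is to prove the equivalence by exhibiting the two constructions and checking that conditions 1--3 correspond exactly to the defining relations of $\cD_X$. The crucial structural input is that $\cD_X$ is generated, as a sheaf of $\K$-algebras, by $\cO_X$ and $\Theta_X$, subject precisely to the relations $f\cdot\zy=f\zy$, $\zy\cdot f-f\cdot\zy=\zy(f)$, and $\zy\cdot\zy'-\zy'\cdot\zy=[\zy,\zy']$ for $f\in\cO_X$ and $\zy,\zy'\in\Theta_X$; equivalently, $\cD_X$ is the universal enveloping algebra of the Lie--Rinehart pair $(\cO_X,\Theta_X)$.

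First I would treat the forward direction. Given a left $\cD_X$-module structure on $\cM_X$ extending the $\cO_X$-action, I use the inclusion $\Theta_X\hookrightarrow\cD_X$ of vector fields as first-order operators and set $\nabla_\zy m:=\zy\cdot m$. This is manifestly $\K$-linear in $\zy$. Condition 1 follows from associativity of the module action together with $f\zy$ being the product of $f$ and $\zy$ in $\cD_X$: $\nabla_{f\zy}m=(f\zy)\cdot m=f\cdot(\zy\cdot m)=f\cdot\nabla_\zy m$. Condition 2 is the image, under the action, of the commutation relation $\zy f=f\zy+\zy(f)$ in $\cD_X$, giving $\nabla_\zy(f\cdot m)=(\zy f)\cdot m=f\cdot\nabla_\zy m+\zy(f)\cdot m$. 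Condition 3 is the image of $[\zy,\zy']=\zy\zy'-\zy'\zy$, yielding $\nabla_{[\zy,\zy']}m=[\nabla_\zy,\nabla_{\zy'}]m$. Thus each of the three conditions is simply the evaluation of one of the three generating relations of $\cD_X$ on $m$.

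For the reverse direction, given $\nabla$ satisfying 1--3, I would construct the $\cD_X$-action by extending the given $\cO_X$-action and the rule $\zy\cdot m:=\nabla_\zy m$ along the generators of $\cD_X$. Conditions 1--3 assert exactly that this assignment respects the three generating relations, so by the universal property of $\cD_X$ as the enveloping algebra of $(\cO_X,\Theta_X)$ it extends uniquely to a left $\cD_X$-module structure; by construction this structure restricts to the original $\cO_X$-action and recovers $\nabla$ on $\Theta_X$, and the two constructions are mutually inverse. Since all relations involved are local, it suffices to carry this out on a cover by coordinate charts and glue: in local coordinates $x_1,\dots,x_n$ with $\partial_1,\dots,\partial_n$, every operator is uniquely $\sum_\alpha f_\alpha\,\partial^\alpha$, and one sets $(\sum_\alpha f_\alpha\partial^\alpha)\cdot m:=\sum_\alpha f_\alpha\,\nabla_{\partial_1}^{\alpha_1}\cdots\nabla_{\partial_n}^{\alpha_n}m$.

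The main obstacle is the \emph{well-definedness and associativity} of this reverse construction, i.e.\ showing that the action is independent of the chosen coordinates and that composition of operators is sent to composition of the induced endomorphisms. This is precisely where condition 3 (flatness) is indispensable: it guarantees that the operators $\nabla_{\partial_i}$ satisfy the same commutation relations as the $\partial_i$, so that reordering monomials $\partial^\alpha$ produces no inconsistency, while conditions 1--2 ensure compatibility of the $\cO_X$-linear and derivation parts under a change of frame. Packaging these checks through the universal enveloping algebra description of $\cD_X$ is the cleanest route, as it reduces the entire verification to the three generating relations and avoids an explicit induction on the order of differential operators.
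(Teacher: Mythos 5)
Your proof is correct and follows essentially the same route as the paper's own justification (given in its Appendix on $\cD$-modules): both rest on the fact that $\cD_X$ is generated by $\cO_X$ and $\Theta_X$ modulo the commutation relations $\zy f=f\zy+\zy(f)$ and $\zy\zy'=\zy'\zy+[\zy,\zy']$, so that conditions 1--3 are exactly the images of these relations and, conversely, $\nabla$ extends along the generators to a $\cD_X$-action. Your packaging via the universal enveloping algebra of the Lie--Rinehart pair $(\cO_X,\Theta_X)$ makes the well-definedness of the reverse construction explicit, a point the paper's sketch leaves implicit, but it is the same underlying argument.
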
\medskip

In the sequel, we omit again subscript $X$, whenever possible.\medskip

In Proposition \ref{DModFlatConnSh}, the target ${\cE}nd_\K(\cM)$ is interpreted in the sense of Equation (\ref{HomSh}), and $\nabla$ is viewed as a morphism of sheaves of $\K$-vector spaces. Hence, $\nabla$ is a family $\nabla^U$, $U\in{\tt Open}_X$, of $\K$-linear maps that commute with restrictions, and $\nabla^U_{\zy_U}$, $\zy_U\in\Theta(U)$, is a family $(\nabla^U_{\zy_U})_V$, $V\in{\tt Open}_U$, of $\K$-linear maps that commute with restrictions. It follows that $\lp\nabla^U_{\zy_U}m_U\rp|_V=\nabla^V_{\zy_U|_V}m_U|_V\,$, with self-explaining notation: {\it the concept of sheaf morphism captures the locality of the connection $\nabla$ with respect to both arguments}.\medskip

Further, the requirement that the conditions (1) -- (3) be satisfied for all $f\in\cO$, $\zy,\zy'\in\Theta$, and $m\in\cM$, means that they must hold for any $U\in{\tt Open}_X$ and all $f_U\in\cO(U)$, $\zy_U,\zy_U'\in\Theta(U)$, and $m_U\in\cM(U)$.\medskip

%Eventually, we see that a morphism $\nabla$ of sheaves of $\K$-vector spaces satisfying the conditions (1) -- (3) is exactly a family of $\cD(U)$-module structures that are compatible with restrictions, i.e., is exactly a $\cD$-module structure.\medskip

We now detailed notation used in Proposition \ref{DModFlatConnSh}. An explanation of the underlying idea of this proposition can be found in Appendix \ref{D-modules}.

\subsection{Closed symmetric monoidal structure on ${\tt Mod}(\cD)$}

\newcommand{\cL}{{\cal L}}
\newcommand{\cN}{{\cal N}}

If we apply the Hom bi-functor (resp., the tensor product bi-functor) over $\cD$ (see (\ref{HomFun}) (resp., see (\ref{TensFun}))) to two left $\cD$-modules (resp., a right and a left $\cD$-module), we get only a (sheaf of) $\K$-vector space(s) (see remark at the end of Section \ref{ShMod}). {\it The good concept is the Hom bi-functor (resp., the tensor product bi-functor) over $\cO$}. Indeed, if $\cP,\cQ\in{\tt Mod}(\cD_X)\subset {\tt Mod}(\cO_X)$, the Hom sheaf ${\cal H}om_{\cO_X}(\cP,\cQ)$ (resp., the tensor product sheaf $\cP\0_{\cO_X} \cQ$) is a sheaf of $\cO_X$-modules. To define on this $\cO_X$-module, an extending left $\cD_X$-module structure, it suffices, as easily checked, to define the action of $\theta\in\Theta_X$ on $\zf\in{\cH}om_{\cO_X}(\cP,\cQ)$, for any $p\in\cP$, by \be\label{HomDMod}(\nabla_\theta\zf)(p)=\nabla_\theta(\zf(p))-\zf(\nabla_\theta p)\;\ee ( resp., on $p\0 q$, $p\in\cP, q\in \cQ$, by \be\label{TensDMod}\nabla_\theta(p\0 q)=(\nabla_\theta p)\0 q+p\0(\nabla_\theta q)\;)\;.\ee

The functor $${\cal H}om_{\cO_X}(\cP,\bullet):{\tt Mod}(\cD_X)\to {\tt Mod}(\cD_X)\;,$$ $\cP\in{\tt Mod}(\cD_X)$, is the right adjoint of the functor $$\bullet\0_{\cO_X}\cP:{\tt Mod}(\cD_X)\to {\tt Mod}(\cD_X)\;:$$ for any $\cN,\cP,\cQ\in{\tt Mod}(\cD_X)$, there is an isomorphism $${\cH}om_{\cD_X}(\cN\0_{\cO_X}\cP,\cQ)\ni f \mapsto (n\mapsto(p\mapsto f(n\0 p)))\in{\cH}om_{\cD_X}(\cN,{\cal H}om_{\cO_X}(\cP,\cQ))\;.$$
Hence, {\it the category $({\tt Mod}(\cD_X),\0_{\cO_X},\cO_X,{\cH}om_{\cO_X})$ is Abelian closed symmetric monoidal}. % Many similar properties, derived properties, direct and inverse images, as well as the external tensor product can be studied. Further, the categories ${\tt Mod}(\cD_X)$ and ${\tt Mod}(\cD_X^{\op{op}})$ are equivalent.
More details on $\cD$-modules can be found in \cite{KS, Scha, Schn}.

\begin{rem} In the following, the underlying space $X$ is a smooth algebraic variety over an algebraically closed field $\K$ of characteristic 0.\end{rem}

We denote by ${\tt qcMod}(\cO_X)$ (resp., ${\tt qcMod}(\cD_X)$) the Abelian category of quasi-coherent $\cO_X$-modules (resp., $\cD_X$-modules that are quasi-coherent as $\cO_X$-modules \cite{HTT}). This category is a full subcategory of ${\tt Mod}(\cO_X)$ (resp., ${\tt Mod}(\cD_X)$). Since further the tensor product of two quasi-coherent $\cO_X$-modules (resp., $\cO_X$-quasi-coherent $\cD_X$-modules) is again of this type, and since $\cO_X\in{\tt qcMod}(O_X)$ (resp., $\cO_X\in {\tt qcMod}(\cD_X)$), the category $({\tt qcMod}(\cO_X),\0_{\cO_X},\cO_X)$ (resp., $({\tt qcMod}(\cD_X),\0_{\cO_X},\cO_X)$) is a symmetric monoidal subcategory of $({\tt Mod}(\cO_X),\0_{\cO_X},\cO_X)$ (resp., $({\tt Mod}(\cD_X),\0_{\cO_X},\cO_X)$). For additional information on coherent and quasi-coherent modules over a ringed space, we refer to Appendix \ref{FinCondShMod}.

\subsection{Commutative $\cD$-algebras}

A $\cD_X$-algebra is a commutative monoid in the symmetric monoidal category ${\tt Mod}(\cD_X)$. More explicitly, a $\cD_X$-algebra is a $\cD_X$-module $\cA,$ together with $\cD_X$-linear maps $$\zm:\cA\0_{\cO_X}\cA\to \cA\quad\text{and}\quad \zi:\cO_X\to \cA\;,$$ which respect the usual associativity, unitality, and commutativity constraints. This means exactly that $\cA$ is a commutative associative unital $\cO_X$-algebra, which is endowed with a flat connection $\nabla$ -- see Proposition \ref{DModFlatConnSh} -- such that vector fields $\zy$ act as derivations $\nabla_\zy$. Indeed, when omitting the latter requirement, we forget the linearity of $\zm$ and $\zi$ with respect to the action of vector fields. Let us translate the $\Theta_X$-linearity of $\zm$. If $\zy\in\Theta_X,$ $a,a'\in\cA$, and if $a\ast a':=\zm(a\0 a')$, we get \be \nabla_\theta(a\ast a')=\nabla_\theta(\zm(a\0 a'))=\zm((\nabla_\theta a)\0 a'+a\0 (\nabla_\theta a'))=(\nabla_\theta a)\ast a'+a\ast (\nabla_\theta a')\;.\label{LeibDAlg}\ee If we set now $1_\cA:=\zi(1)$, Equation (\ref{LeibDAlg}) shows that $\nabla_\zy (1_\cA)=0$. It is easily checked that the $\Theta_X$-linearity of $\zi$ does not encode any new information. Hence,

\begin{defi} A commutative {\bf $\cD_X$-algebra} is a commutative monoid in ${\tt Mod}(\cD_X)$, i.e., a commutative associative unital $\cO_X$-algebra that is endowed with a flat connection $\nabla$ such that $\nabla_\zy$, $\zy\in\Theta_X,$ is a derivation.\end{defi}

\section{Differential graded $\cD$-modules and differential graded $\cD$-algebras}

\subsection{Monoidal categorical equivalence between chain complexes of $\cD_X$-modules and of $\cD_X(X)$-modules}\label{MonEquShMod}

Note first that any equivalence $F:{\tt C} \rightleftarrows {\tt D}:G$ between Abelian categories is exact. To see, for instance, that $F$ is an exact functor, let $0\to C'\to C\to C''\to 0$ be an exact sequence in $\tt C$. Exactness means that at each spot $\ker g=\op{im} f=\ker(\op{coker} f)$, where $f$ (resp., $g$) is the incoming (resp., outgoing) morphism. In other words, exactness means that at each spot $\lim \mathbf{G}=\lim(\op{colim}\mathbf{F})$, for some diagrams $\mathbf{G}:J\to\tt C$ and $\mathbf{F}:I\to \tt C$. However, in view of the equivalence, a functor $\mathbf{H}:K\to \tt C$ has the (co)limit $L$ if and only if the functor $F\mathbf{H}:K\to \tt D$ has the (co)limit $F(L)$. Consider now the $\tt D$-sequence $0\to F(C')\to F(C)\to F(C'')\to 0$. The kernels $\ker F(g)$ and $\ker(\op{coker}F(f))$ are the limits $\lim F\mathbf{G}=F(\lim \mathbf{G})$ and $\lim(\op{colim}F\mathbf{F})=F(\lim(\op{colim\, \mathbf{F}}))$, respectively, so that the considered kernels coincide and the $\tt D$-sequence is exact.\medskip

On the other hand, if $F:{\tt C} \rightleftarrows {\tt D}:G$ is an equivalence between monoidal categories, and if one of the functors $F$ or $G$ is strongly monoidal, then the other is strongly monoidal as well \cite{Hopf}. For instance, if $G$ is strongly monoidal, we have $$F(C\0 C')\simeq F(G(F(C))\0 G(F(C')))\simeq F(G(F(C)\0 F(C')))\simeq F(C)\0F(C')\;,$$ and, if $I\in\tt C$ and $J\in\tt D$ are the monoidal units, $$F(I)\simeq F(G(J))\simeq J\;.$$

% In some situations, a category of sheaves can be replaced by the corresponding category of global sections. Depending on the reader, this issue is more or less well-known: the present section may be skipped, or, on the contrary, additional information can be found in Appendix \ref{ShVsGlobSec}.\medskip

It is well-known, see (\ref{ShVsSectqcMod}), that, for any affine algebraic variety $X$, we have the equivalence \be\label{ShVsSectqcMod1}\zG(X,\bullet):{\tt qcMod}(\cO_X)\to {\tt Mod}(\cO_X(X)):\widetilde\bullet\;\ee between Abelian symmetric monoidal categories, where $\widetilde\bullet$ is isomorphic to $\cO_X\0_{\cO_X(X)}\bullet\,$. Since the latter is obviously strongly monoidal, both functors, $\zG(X,\bullet)$ and $\widetilde\bullet\,$, are exact and strongly monoidal.\medskip

%For embedded spaces, more precisely for Stein manifolds $X$ (Cartan's theorems A and B) and for affine algebraic varieties $X$ (\cite{Serre}), any coherent sheaf is generated by its global sections and is acyclic in positive degrees. Acyclicity of all {\it coherent} sheaves implies that the global section functor $\zG(X,\bullet)$ is exact. Similarly, for an affine algebraic variety $X$ with function sheaf $\cO_X$ (\cite{Gro}), the functor $\zG(X,\bullet):{\tt qcMod}(\cO_X)\to {\tt Mod}(\cO_X(X))$, from the category of {\it quasi-coherent} $\cO_X$-modules to the category of $\cO_X(X)$-modules, is exact (roughly speaking, exactness is also a sufficient condition for the algebraic variety to be affine). This exactness property is tightly related to the categorical equivalence (\ref{ShVsSectqcMod}) \be\label{ShVsSectqcMod1}\zG(X,\bullet):{\tt qcMod}(\cO_X)\to {\tt Mod}(\cO_X(X))\;.\ee

% the authors prove a similar equivalence under an analogous exactness condition. More precisely, they call a smooth algebraic variety $X$ {\bf $\cD$-affine}, if its global section functor $\zG(X,\bullet):{\tt qcMod}(\cD_X)\to {\tt Mod}(\cD_X(X))$ is exact, and if, further, $\zG(X,\cP)=0$, $\cP\in{\tt qcMod}(\cD_X)$, implies that $\cP=0$. Any smooth affine algebraic variety is $\cD$-affine, but there exist also non-affine $\cD$-affine varieties. Eventually, they prove that, if $X$ is $\cD$-affine,
Similarly,

\begin{prop}\label{MonoidEquiv1} If $X$ is a smooth affine algebraic variety, its global section functor $\zG(X,\bullet)$ yields an equivalence \be\label{ShVsSectqcModDMon}\zG(X,\bullet):({\tt qcMod}(\cD_X),\0_{\cO_X},\cO_X)\to ({\tt Mod}(\cD_X(X)),\0_{\cO_X(X)},\cO_X(X))\;\ee between Abelian symmetric monoidal categories, and it is exact and strongly monoidal.\end{prop}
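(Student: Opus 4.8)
The plan is to bootstrap the statement from the $\cO$-module equivalence (\ref{ShVsSectqcMod1}) by transporting the extra datum that distinguishes a $\cD$-module from its underlying $\cO$-module, namely the flat connection of Proposition \ref{DModFlatConnSh}. By that proposition, an object of ${\tt qcMod}(\cD_X)$ is a quasi-coherent $\cO_X$-module $\cM$ together with a flat connection $\nabla:\Theta_X\to{\cal E}nd_\K(\cM)$ satisfying conditions (1)--(3); dually, the smooth affine hypothesis yields the global analogue of Proposition \ref{DModFlatConnSh}, so that an object of ${\tt Mod}(\cD_X(X))$ is an $\cO_X(X)$-module $M$ equipped with a flat connection $\nabla:\Theta_X(X)\to\End_\K(M)$ obeying the global versions of (1)--(3). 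Thus it suffices to promote the underlying $\cO$-level equivalence $\zG(X,\bullet)$, with quasi-inverse $\widetilde\bullet\cong\cO_X\0_{\cO_X(X)}\bullet$, to a correspondence between these connections.

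First I would define the functor on the nose. On underlying modules $\zG(X,\bullet)$ is already the equivalence (\ref{ShVsSectqcMod1}); it remains to produce the $\cD_X(X)$-action on $M:=\zG(X,\cM)$. Since $\cD_X(X)=\zG(X,\cD_X)$ and $\cM$ is a sheaf of $\cD_X$-modules, the presheaf-level action maps $\cD_X(U)\times\cM(U)\to\cM(U)$ evaluated at $U=X$ endow $M$ with a $\cD_X(X)$-module structure extending its $\cO_X(X)$-module structure; equivalently, one takes global sections of $\nabla$ and postcomposes with the tautological map $\zG(X,{\cal E}nd_\K(\cM))\to\End_\K(\zG(X,\cM))$. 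For the quasi-inverse, given a $\cD_X(X)$-module $M$ I would put on $\widetilde M=\cO_X\0_{\cO_X(X)}M$ the connection forced by conditions (1)--(2), that is $\nabla_{f\theta}(g\0 m)=f\big(\theta(g)\0 m+g\0\nabla_\theta m\big)$ for $f,g\in\cO_X$, $\theta\in\Theta_X(X)$, $m\in M$, extended additively over sums of such local vector fields.

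The step I expect to be the main obstacle is precisely the one the introduction warns about: reconciling the sheaf-theoretic locality of $\nabla$ (it is a morphism of sheaves, local in both arguments) with the purely global datum of an action of $\Theta_X(X)$ on $M$. The key input is that on a smooth affine $X$ the tangent sheaf is quasi-coherent, so (\ref{ShVsSectqcMod1}) applied to $\Theta_X$ gives $\Theta_X\cong\cO_X\0_{\cO_X(X)}\Theta_X(X)$: every local vector field is an $\cO_X$-combination of global ones. Using condition (1), the $\cO$-linearity in the vector-field slot, this lets me extend the global connection to all local vector fields and check that it is well defined and compatible with restrictions; quasi-coherence guarantees that the reconstructed sheaf connection, restricted to global data, returns the original one, so the two constructions are mutually inverse. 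Flatness and the Leibniz rule, conditions (2)--(3), transfer between the local and global pictures because they are identities in the generators that are preserved by the exact strongly monoidal functor $\widetilde\bullet\cong\cO_X\0_{\cO_X(X)}\bullet$ and by its quasi-inverse $\zG(X,\bullet)$.

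Finally, exactness and strong monoidality come almost for free. Both categories are Abelian, so once the functor is an equivalence it is automatically exact, by the general argument recorded at the start of this subsection. For the monoidal structure, recall that the tensor product on $\cD$-modules is the $\cO$-module tensor product carried along with the connection (\ref{TensDMod}); since $\zG(X,\bullet)$ is already strongly monoidal at the $\cO$-level and the unit on both sides is $\cO_X$, respectively $\cO_X(X)$, with trivial connection, it remains only to check that the isomorphism $\zG(X,\cP\0_{\cO_X}\cQ)\cong\zG(X,\cP)\0_{\cO_X(X)}\zG(X,\cQ)$ intertwines the two connections, which is immediate from the coinciding Leibniz formulas (\ref{TensDMod}). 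Alternatively, one may invoke the lemma recalled above that in an equivalence of monoidal categories strong monoidality of one functor forces it on the other.
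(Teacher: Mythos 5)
Your proposal is correct in substance but follows a genuinely different route from the paper. The paper disposes of the categorical equivalence in one line by citing Proposition 1.4.4 of \cite{HTT}, and spends its effort only on strong monoidality: it checks that under the $\cO$-level isomorphism $\zG(X,\cP\0_{\cO_X}\cQ)\simeq\zG(X,\cP)\0_{\cO_X(X)}\zG(X,\cQ)$ of (\ref{MonoidGlobSect}) the $\cD_X(X)$-module structures on the two sides coincide, precisely because both are given by the Leibniz formula (\ref{TensDMod}); exactness is the general fact, recorded at the start of the subsection, that any equivalence of Abelian categories is exact. You instead reconstruct the equivalence itself, in effect reproving the cited result of \cite{HTT}: you describe both categories via flat connections (Proposition \ref{DModFlatConnSh} and its global analogue) and transport connections across the $\cO$-module equivalence (\ref{ShVsSectqcMod1}), the key input being that $\Theta_X$ is quasi-coherent, so that $\Theta_X\simeq\cO_X\0_{\cO_X(X)}\Theta_X(X)$ and condition (1) makes your extension formula $\cO_X(X)$-balanced in the vector-field slot, hence well defined; your treatment of monoidality and exactness then agrees with the paper's. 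What each approach buys: the paper's is short and shifts the real content to a standard reference; yours is self-contained and makes visible exactly where smoothness and affineness enter. The price is twofold. First, several verifications you label routine (the balancedness and flatness computations, gluing the reconstructed connection over a basis of principal affine opens, where quasi-coherence gives $\cM(U)\simeq\cO_X(U)\0_{\cO_X(X)}\cM(X)$, and the bijection on morphisms) would need to be written out in full. Second, and more importantly, your ``global analogue of Proposition \ref{DModFlatConnSh}'' -- the assertion that $\cD_X(X)$ is generated by $\cO_X(X)$ and $\Theta_X(X)$ subject to the relations (1)--(3) -- is itself a nontrivial theorem about smooth affine varieties (it fails without smoothness), so it requires a citation of the same calibre as the one the paper leans on; with such a reference in hand, your argument closes.
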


\begin{proof} For the categorical equivalence, see \cite[Proposition 1.4.4]{HTT}. Exactness is now clear and it suffices to show that $\zG(X,\bullet)$ is strongly monoidal. We know that $\zG(X,\bullet)$ is strongly monoidal as functor between modules over functions, see (\ref{ShVsSectqcMod1}). Hence, if $\cP,\cQ\in{\tt qcMod}(\cD_X)$, then \be\label{MonoidGlobSect}\zG(X,\cP\0_{\cO_X}\cQ)\simeq \zG(X,\cP)\0_{\cO_X(X)}\zG(X,\cQ)\ee as $\cO_X(X)$-modules. To define the $\cD_X$-module structure on $\cP\0_{\cO_X}\cQ$, we defined a family of compatible $\cD_X(U)$-module structures on the $\lp\cP\0_{\cO_X}\cQ\rp(U)$, $U\in{\tt Open}_X$, by setting, for $\zy_U\in\Theta_X(U)\,,$ $$\nabla^U_{\zy_U}:\cP(U)\0_{\cO_X(U)}\cQ(U)\ni p\0 q\mapsto $$ $$(\nabla^U_{\zy_U}p)\0 q+p\0(\nabla^U_{\zy_U}q)\in \cP(U)\0_{\cO_X(U)}\cQ(U)\subset(\cP\0_{\cO_X}\cQ)(U)\;,$$ where the inclusion means that any section of the presheaf $\cP\oslash_{\cO_X}\cQ$ can be viewed as a section of its sheafification $\cP\0_{\cO_X}\cQ$ (see Equation (\ref{TensSh}), Equation (\ref{TensDMod}) and Appendix \ref{D-modules}). We then (implicitly) `extended' $\nabla^U_{\zy_U}$ from $\cP(U)\0_{\cO_X(U)}\cQ(U)$ to $(\cP\0_{\cO_X}\cQ)(U)$. In view of (\ref{MonoidGlobSect}), the action $\nabla^X$ of $\Theta_X(X)$ on $\cP(X)\0_{\cO_X(X)}\cQ(X)$ and $(\cP\0_{\cO_X}\cQ)(X)$ `coincide', and so do the $\cD_X(X)$-module structures of these modules. Eventually, the global section functor is strongly monoidal.% and implements an equivalence of symmetric monoidal categories.
% The following should be true: The monoidal character of $\widetilde{\bullet}$ is obvious, if we use the isomorphic functor $\cO_X\0_{\cO_X(X)}\bullet$ (this isomorphism, which is mentioned in Morye, is seen, if one remembers that $\0$ is a sheafification). The monoidal character of its quasi-inverse $\zG(X,\bullet)$ is (possibly) non-obvious via a direct approach, and the difficulty of its proof is here hidden in the difficulty of the proof of the categorical equivalence. Similarly, the difficulty of the proof of the analogous categorical equivalence for $\cD$-modules -- in this situation the monoidal character of the global section functor is clear -- hides the non-triviality of the proof of the monoidal character of the quasi-inverse $\cD_X\0_{\cD_X(X)}\bullet$ of $\zG(X,\bullet)$.
\end{proof}

\begin{rem} In the sequel, we work systematically over a smooth affine algebraic variety $X$ over an algebraically closed field $\K$ of characteristic 0.\end{rem}

Since the category ${\tt qcMod}(\cD_X)$ is Abelian symmetric monoidal, the category ${\tt DG_+qcMod}(\cD_X)$ of differential non-negatively graded $\cO_X$-quasi-coherent $\cD_X$-modules is Abelian and symmetric monoidal as well -- for the usual tensor product of chain complexes and chain maps -- . The unit of this tensor product is the chain complex $\cO_X$ concentrated in degree 0. The braiding $\zb:\cP_\bullet\0 \cQ_\bullet\to \cQ_\bullet\0 \cP_\bullet$ is given by $$\zb(p\0 q)=(-1)^{\tilde p\tilde q}q\0 p\;,$$ where `tilde' denotes the degree and where the sign is necessary to obtain a chain map. Let us also mention that the zero object of ${\tt DG_+qcMod}(\cD_X)$ is the chain complex $(\{0\},0)\,$.

% Mention that $\cO_X$-quasi-coherent $\cD$-modules are an Abelian subcategory of quasi-coherent $\cO_X$-modules, itself a full Abelian subcategory of $\cO_X$-modules (see PSch's answer), if $X$ is an algebraic variety (seems implicit in HTT).

\begin{prop}\label{MonoidEquiv2} If $X$ is a smooth affine algebraic variety, its global section functor induces an equivalence \be\label{ShVsSectDGqcModDMon}\zG(X,\bullet):({\tt DG_+qcMod}(\cD_X),\0_{\cO_X},\cO_X)\to ({\tt DG_+Mod}(\cD_X(X)),\0_{\cO_X(X)},\cO_X(X))\;\ee of Abelian symmetric monoidal categories, and is exact and strongly monoidal.\end{prop}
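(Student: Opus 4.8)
The plan is to upgrade the already-established monoidal equivalence at the level of plain modules (Proposition \ref{MonoidEquiv1}) to the level of chain complexes, by observing that passing to differential non-negatively graded objects is a purely formal, functorial construction that preserves equivalences, exactness, and strong monoidality. First I would note that an equivalence $F:{\tt C}\rightleftarrows{\tt D}:G$ between Abelian categories induces an equivalence $F_\bullet:{\tt DG_+C}\rightleftarrows{\tt DG_+D}:G_\bullet$ between the associated categories of non-negatively graded chain complexes, by applying $F$ (resp.\ $G$) degreewise and to each differential. Since $F$ is exact (as recalled in the opening paragraph of Section \ref{MonEquShMod}, every equivalence of Abelian categories is automatically exact), it sends a differential $d$ with $d^2=0$ to a differential $F(d)$ with $F(d)^2=F(d^2)=0$, and likewise sends chain maps to chain maps; the natural isomorphisms $GF\simeq\id$ and $FG\simeq\id$ extend degreewise to natural isomorphisms $G_\bullet F_\bullet\simeq\id$ and $F_\bullet G_\bullet\simeq\id$, giving the categorical equivalence (\ref{ShVsSectDGqcModDMon}) with $F=\zG(X,\bullet)$.

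Next I would address exactness of $\zG(X,\bullet)_\bullet$. By the very first paragraph of Section \ref{MonEquShMod}, any equivalence between Abelian categories is exact, so once the equivalence of the graded categories is in place, exactness of the induced functor is automatic and requires no separate argument.

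For strong monoidality, the key point is that the tensor product on ${\tt DG_+qcMod}(\cD_X)$ is built degreewise from the tensor product $\0_{\cO_X}$ on ${\tt qcMod}(\cD_X)$ via the usual formula $(\cP_\bullet\0\cQ_\bullet)_n=\bigoplus_{i+j=n}\cP_i\0_{\cO_X}\cQ_j$, with the Koszul-signed differential, and similarly on the target category over $\cD_X(X)$; the monoidal unit on each side is the relevant ring concentrated in degree $0$. Since $\zG(X,\bullet)$ is strongly monoidal on the underlying module categories (Proposition \ref{MonoidEquiv1}), it commutes with the finite direct sums defining each graded piece, so I would assemble the degreewise isomorphisms (\ref{MonoidGlobSect}) into an isomorphism $\zG(X,(\cP_\bullet\0_{\cO_X}\cQ_\bullet))\simeq \zG(X,\cP_\bullet)\0_{\cO_X(X)}\zG(X,\cQ_\bullet)$ of graded objects. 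I would invoke the second paragraph of Section \ref{MonEquShMod}: since the inverse functor $\widetilde{\bullet}$ is strongly monoidal, it suffices to exhibit strong monoidality of one of the two, and then the other follows automatically.

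The main obstacle -- really the only substantive verification -- is checking that the degreewise isomorphism is compatible with the differentials and with the Koszul signs in the braiding, i.e.\ that the assembled map is a genuine isomorphism of chain complexes commuting with the symmetry constraints, not merely of graded $\cO_X(X)$-modules. Concretely I would check that the structural isomorphism of Proposition \ref{MonoidEquiv1} is natural in both arguments, so that it intertwines the maps induced by $d_\cP$ and $d_\cQ$ on source and target; naturality is exactly what guarantees commutativity with the total differential, and compatibility with the braiding $\zb$ follows because the sign $(-1)^{\tilde p\tilde q}$ depends only on degrees, which $\zG(X,\bullet)$ preserves. Granting this naturality (which is implicit in the construction of the module-level equivalence), the extension to chain complexes is formal, and the proof concludes that $\zG(X,\bullet)$ is an exact, strongly monoidal equivalence of Abelian symmetric monoidal categories.
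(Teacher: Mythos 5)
Your proposal is correct and takes essentially the same route as the paper: the equivalence and its quasi-inverse are induced degreewise (with the natural isomorphisms extending to chain maps), exactness is automatic for an equivalence of Abelian categories, and strong monoidality is obtained by assembling the degreewise isomorphisms of Proposition \ref{MonoidEquiv1} across the finite direct sums $\bigoplus_{k+\ell=n}$ and using naturality to get compatibility with the total differential. The only slip is your justification that $\zG(X,\bullet)$ ``commutes with the finite direct sums'' \emph{because} it is strongly monoidal -- strong monoidality does not give this; the correct reason (used in the paper) is that $\zG(X,\bullet)$ is additive and, being an equivalence, a left adjoint, hence preserves all colimits.
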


\begin{proof} We first show that the categories ${\tt DG_+qcMod}(\cD_X)$ and ${\tt DG_+Mod}(\cD_X(X))$ are equivalent, then that the equivalence is strongly monoidal.\medskip

Let $F=\zG(X,\bullet)$ and $G$ be quasi-inverse (additive) functors that implement the equivalence (\ref{ShVsSectqcModDMon}). They induce functors $\mathbf{F}$ and $\mathbf{G}$ between the corresponding categories of chain complexes. Moreover, the natural isomorphism $a:\id\Rightarrow G\circ F$ induces, for each chain complex $\cP_\bullet\in{\tt DG_+qcMod}(\cD_X)$, a chain isomorphism $\mathbf{a}_{\cP_\bullet}:\cP_\bullet\rightarrow (\mathbf{G\circ F})(\cP_\bullet)$, which is functorial in $\cP_\bullet\,$. Both, the chain morphism property of $\mathbf{a}_{\cP_\bullet}$ and the naturality of $\mathbf{a}$, are direct consequences of the naturality of $a$. Similarly, the natural isomorphism $b:F\circ G\Rightarrow \id$ induces a natural isomorphism $\mathbf{b}:\mathbf{F\circ G}\Rightarrow\id$, so that ${\tt DG_+qcMod}(\cD_X)$ and ${\tt DG_+Mod}(\cD_X(X))$ are actually equivalent categories.\medskip

It suffices now to check that Proposition \ref{MonoidEquiv1} implies that $\mathbf{F}$ is strongly monoidal. Let $(\cP_\bullet,d),(\cQ_\bullet,\zd)\in{\tt DG_+qcMod}(\cD_X)$: $$\cP_\bullet\0_{\cO_X}\cQ_\bullet:\ldots\longrightarrow \bigoplus_{k+\ell=n+1}\cP_k\0_{\cO_X}\cQ_\ell\stackrel{\p}{\longrightarrow}\bigoplus_{k+\ell=n}\cP_k\0_{\cO_X}\cQ_\ell\longrightarrow\ldots\;,$$ where $\p=d\0\id+\id\0\,\zd\,.$ Since $F:{\tt qcMod}(\cD_X)\to{\tt Mod}(\cD_X(X))$ is strongly monoidal and commutes with colimits (recall that $F$ is left adjoint of $G$ and that left adjoints commute with colimits), its application to the preceding sequence leads to $$\mathbf{F}(\cP_\bullet\0_{\cO_X}\cQ_\bullet):\ldots\longrightarrow \bigoplus_{k+\ell=n+1}F(\cP_k)\0_{\cO_X(X)}F(\cQ_\ell)\stackrel{F(\p)}{\longrightarrow}\bigoplus_{k+\ell=n}F(\cP_k)\0_{\cO_X(X)}F(\cQ_\ell)\longrightarrow\ldots\;,$$ with $F(\p)=F(d)\0\id+\id\0\,F(\zd)$. In other words, $\mathbf{F}(\cP_\bullet\0_{\cO_X}\cQ_\bullet)$ coincides up to isomorphism, as chain complex, with $\mathbf{F}(\cP_\bullet)\0_{\cO_X(X)}\mathbf{F}(\cQ_\bullet)$. The remaining requirements are readily checked. \end{proof}

\subsection{Differential graded $\cD_X$-algebras vs. differential graded $\cD_X(X)$-algebras}

The strongly monoidal functors $\mathbf{F}:{\tt DG_+qcMod}(\cD_X)\rightleftarrows{\tt DG_+Mod}(\cD_X(X)):\mathbf{G}$ yield an equivalence between the corresponding categories of commutative monoids:

\begin{cor} For any smooth affine variety $X$, there is an equivalence of categories \be\label{ShVsSectqcDAlg} \zG(X,\bullet): {\tt DG_+qcCAlg}(\cD_X)\rightarrow{\tt DG_+CAlg}(\cD_X(X))\;\ee between the category of differential graded quasi-coherent commutative $\cD_X$-algebras and the category of differential graded commutative $\cD_X(X)$-algebras.\end{cor}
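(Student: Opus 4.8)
The plan is to treat this as a formal consequence of Proposition~\ref{MonoidEquiv2}: by definition the objects of ${\tt DG_+qcCAlg}(\cD_X)$ (resp.\ ${\tt DG_+CAlg}(\cD_X(X))$) are the commutative monoids in the symmetric monoidal category ${\tt DG_+qcMod}(\cD_X)$ (resp.\ ${\tt DG_+Mod}(\cD_X(X))$), and a strong symmetric monoidal equivalence always restricts to an equivalence on the associated categories of commutative monoids. So the task is to promote the equivalence $\mathbf{F}=\zG(X,\bullet):{\tt DG_+qcMod}(\cD_X)\rightleftarrows{\tt DG_+Mod}(\cD_X(X)):\mathbf{G}$ of Proposition~\ref{MonoidEquiv2} to the level of commutative monoids.

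First I would recall the general fact that a strong symmetric monoidal functor carries commutative monoids to commutative monoids. Concretely, given a commutative monoid $(\cA,\zm,\zi)$ in ${\tt DG_+qcMod}(\cD_X)$, I would equip $\mathbf{F}(\cA)$ with the multiplication obtained by composing the structure isomorphism $\mathbf{F}(\cA)\0_{\cO_X(X)}\mathbf{F}(\cA)\isoto\mathbf{F}(\cA\0_{\cO_X}\cA)$ with $\mathbf{F}(\zm)$, and with the unit obtained from $\cO_X(X)\isoto\mathbf{F}(\cO_X)$ followed by $\mathbf{F}(\zi)$. Associativity, unitality and --- because $\mathbf{F}$ is \emph{symmetric} monoidal --- commutativity of the resulting monoid follow from the coherence axioms for the monoidal structure isomorphisms of $\mathbf{F}$ together with the corresponding axioms for $(\cA,\zm,\zi)$; morphisms of monoids are sent to morphisms of monoids for the same reason. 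The same construction applied to $\mathbf{G}$, which is strongly monoidal as well (in an equivalence of monoidal categories, strong monoidality of one functor forces it for the other, as noted before Proposition~\ref{MonoidEquiv1}), produces the reverse assignment on monoids.

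It then remains to check that the natural isomorphisms $\mathbf{a}:\id\Rightarrow\mathbf{G}\circ\mathbf{F}$ and $\mathbf{b}:\mathbf{F}\circ\mathbf{G}\Rightarrow\id$ of Proposition~\ref{MonoidEquiv2} are \emph{monoidal} natural isomorphisms, i.e.\ that their components are compatible with the transported multiplications and units. Granting this, each component $\mathbf{a}_\cA$ is an isomorphism of commutative monoids, so $\mathbf{a}$ and $\mathbf{b}$ descend to natural isomorphisms between the induced endofunctors on the categories of commutative monoids, which yields the desired equivalence (\ref{ShVsSectqcDAlg}).

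I expect the only genuine verification --- hence the main obstacle --- to be this compatibility of $\mathbf{a}$ and $\mathbf{b}$ with the monoidal structures, which is precisely where the strong monoidality of both $\mathbf{F}$ and $\mathbf{G}$ is used; everything else is the routine bookkeeping of coherence diagrams. Since $\zG(X,\bullet)$ is here an honest equivalence whose quasi-inverse is the strongly monoidal functor $\widetilde{\bullet}\simeq\cO_X\0_{\cO_X(X)}\bullet$, one can alternatively read off the transported monoid structure on a global-sections algebra explicitly and verify the comparison isomorphisms by hand, bypassing the abstract coherence argument.
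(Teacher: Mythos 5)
Your proposal is correct and takes essentially the same route as the paper: the paper derives the corollary in a single line, stating that the strongly monoidal equivalence $\mathbf{F}:{\tt DG_+qcMod}(\cD_X)\rightleftarrows{\tt DG_+Mod}(\cD_X(X)):\mathbf{G}$ of Proposition~\ref{MonoidEquiv2} ``yields an equivalence between the corresponding categories of commutative monoids.'' The details you supply --- transporting the monoid structure along the strong symmetric monoidal functors and checking that the comparison isomorphisms are monoidal --- are exactly the standard verification the paper leaves implicit.
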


The main goal of the present paper is to construct a model category structure on the {\small LHS} category. In view of the preceding corollary, it suffices to build this model structure on the {\small RHS} category. We thus deal in the sequel exclusively with this category of {\bf differential graded $\cD$-algebras}, where $\cD:=\cD_X(X)$, which we denote simply by $\tt DG\cD A$. Similarly, the objects of ${\tt DG_+Mod}(\cD_X(X))$ are termed {\bf differential graded $\cD$-modules} and their category is denoted by $\tt DG\cD M$.

\subsection{The category $\tt DG\cD A$}

In this subsection we describe the category $\tt DG\cD A$ and prove first properties.\medskip

Whereas $$\op{Hom}_\cD(P,Q)=\op{Hom}_{{\tt Mod}(\cD)}(P,Q)\;,$$ $P,Q\in{\tt Mod}(\cD)$, is a $\K$-vector space, the set $$\op{Hom}_{\cD{\tt A}}(A,B)=\op{Hom}_{{\tt CAlg}(\cD)}(A,B)\;,$$ $A,B\in{\tt CAlg}(\cD)$, is even not an Abelian group. Hence, there is no category of chain complexes over commutative $\cD$-algebras and the objects of $\tt DG\cD A$ are (probably useless to say) no chain complexes of algebras.\medskip

As explained above, a $\cD$-algebra is a commutative unital $\cO$-algebra, endowed with a (an extending) $\cD$-module structure, such that vector fields act by derivations. Analogously, a differential graded $\cD$-algebra is easily seen to be a differential graded commutative unital $\cO$-algebra (a graded $\cO$-module together with an $\cO$-bilinear degree respecting multiplication, which is associative, unital, and graded-commutative; this module comes with a square 0, degree $-1$, $\cO$-linear, graded derivation), which is also a differential graded $\cD$-module (for the same differential, grading, and $\cO$-action), such that vector fields act as non-graded derivations.

\begin{prop} A differential graded $\cD$-algebra is a differential graded commutative unital $\cO$-algebra, as well as a differential graded $\cD$-module, such that vector fields act as derivations. Further, the morphisms of $\tt DG\cD A$ are the morphisms of $\tt DG\cD M$ that respect the multiplications and units. \end{prop}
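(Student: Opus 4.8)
The plan is to read off the three structures in the statement by unfolding the definition of $\tt DG\cD A$ as the category of commutative monoids in the Abelian symmetric monoidal category $({\tt DG_+Mod}(\cD),\0_\cO,\cO)$ established above, in direct analogy with the non-graded Definition and Equation (\ref{LeibDAlg}). Thus an object is a differential graded $\cD$-module $A$ equipped with morphisms $\zm:A\0_\cO A\to A$ and $\zi:\cO\to A$ of $\tt DG\cD M$ subject to the associativity, unitality and commutativity constraints. The underlying differential graded $\cD$-module structure on $A$ is already one of the asserted structures, so it remains to analyse $\zm$ and $\zi$ and to separate the $\cO$-algebra content from the genuinely $\cD$-linear content.

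For the $\cO$-algebra part I would invoke the forgetful functor $U:{\tt Mod}(\cD)\to{\tt Mod}(\cO)$. Because the monoidal products on both categories are $\0_\cO$ with the common unit $\cO$ (this is exactly the closed symmetric monoidal structure on ${\tt Mod}(\cD)$ recalled above), $U$ is strong monoidal — indeed strict and the identity on underlying graded $\cO$-modules, differentials and $\cO$-actions. Passing to chain complexes, the induced functor ${\tt DG_+Mod}(\cD)\to{\tt DG_+Mod}(\cO)$ is again strong symmetric monoidal, and such a functor carries commutative monoids to commutative monoids. Hence $(A,\zm,\zi)$ becomes a commutative monoid in $({\tt DG_+Mod}(\cO),\0_\cO,\cO)$, i.e.\ a differential graded commutative unital $\cO$-algebra; the requirement that $\zm$ be a degree $0$ chain map is precisely what forces the differential to be a square-zero degree $-1$ graded $\cO$-linear derivation of the multiplication, the Koszul sign entering through the differential on $A\0_\cO A$. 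This yields the first two structures, with the \emph{same} grading, differential and $\cO$-action.

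The only remaining information is the gap between $\cD$-linearity and mere $\cO$-linearity of $\zm$ and $\zi$. By Proposition \ref{DModFlatConnSh}, transported to global sections through the equivalence of Proposition \ref{MonoidEquiv2}, the $\cD$-module structure on $A$ is encoded by a flat connection $\nabla$, and $\cD$-linearity of $\zm$ beyond its $\cO$-linearity is exactly $\Theta$-linearity, $\zm\circ\nabla_\theta=\nabla_\theta\circ\zm$. Unwinding the connection on the tensor product by Equation (\ref{TensDMod}) converts this identity, just as in Equation (\ref{LeibDAlg}), into the Leibniz rule $\nabla_\theta(a\ast a')=(\nabla_\theta a)\ast a'+a\ast(\nabla_\theta a')$; since vector fields sit in degree $0$, this is a non-graded, sign-free derivation. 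As already noted in the non-graded case, $\Theta$-linearity of $\zi$ contributes nothing new, merely recording $\nabla_\theta 1_A=0$. This produces the third structure. The morphism statement is then immediate: a morphism of commutative monoids is by definition a morphism of the underlying objects — that is, of $\tt DG\cD M$ — commuting with $\zm$ and $\zi$, i.e.\ respecting multiplications and units.

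The step I expect to require the most care is not any single computation but the bookkeeping that identifies the grading, differential and $\cO$-action across the three descriptions, together with the sign conventions: one must check that the degree $0$ operators $\nabla_\theta$ give ungraded derivations while the degree $-1$ differential gives a graded one, and that the monoid axioms are transported faithfully by the (identity-on-$\cO$-modules) strong monoidal forgetful functor, so that beyond the Koszul sign nothing further needs verification.
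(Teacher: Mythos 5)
Your proposal is correct and follows essentially the same route as the paper, which offers no formal proof but treats the statement as immediate from unwinding the commutative-monoid definition exactly as in the non-graded case: the $\cO$-linear content of $\zm,\zi$ together with the chain-map condition gives the differential graded commutative unital $\cO$-algebra, and $\Theta$-linearity of $\zm$, unwound via the tensor-product connection of Equation (\ref{TensDMod}), reproduces the Leibniz computation of Equation (\ref{LeibDAlg}), i.e.\ vector fields act as non-graded derivations. Your packaging of the first step through the strong monoidal forgetful functor to $\cO$-modules is a tidier way of saying what the paper says in prose, not a genuinely different argument.
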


In fact:

\begin{prop} The category $\tt DG\cD A$ is symmetric monoidal for the tensor product of $\tt DG\cD M$ with values on objects that are promoted canonically from $\tt DG\cD M$ to $\tt DG\cD A$ and same values on morphisms. The tensor unit is $\cO$; the initial object $(\,$resp., terminal object$\,)$ is $\cO$ $(\,$resp., $\{0\}$$\,)$.\end{prop}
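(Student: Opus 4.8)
The statement is the standard fact that the commutative monoids in a symmetric monoidal category again form a symmetric monoidal category, so the plan is to transport the structure recorded for $\tt DG\cD M$ in Proposition \ref{MonoidEquiv2} to its commutative monoids, and then to identify the distinguished objects by hand. Recall that $\tt DG\cD M$ is Abelian symmetric monoidal with tensor product $\otimes_\cO$, unit $\cO$ (concentrated in degree $0$), and Koszul braiding $\beta$, and that an object of $\tt DG\cD A$ is by definition a commutative monoid $(A,\mu_A,\iota_A)$ in $\tt DG\cD M$ while a morphism is a morphism of $\tt DG\cD M$ respecting $\mu$ and $\iota$. The point to stress at the outset is that, once the monoidal structure on $\tt DG\cD M$ is in hand, no further $\cD$-specific computation (no checking of the Leibniz rule for $\nabla_\zy$, no sign checks for the differential) is required: everything will be assembled from morphisms of $\tt DG\cD M$, which are already chain maps and $\cD$-linear.

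First I would define the promoted tensor product. Given $A,B\in\tt DG\cD A$, I equip the $\cD$-module $A\otimes_\cO B$ with the multiplication
$$\mu_{A\otimes B} = (\mu_A\otimes\mu_B)\circ(\id_A\otimes\,\beta_{B,A}\otimes\id_B)$$
(the outer associativity isomorphisms being understood) and with the unit $\iota_{A\otimes B}=(\iota_A\otimes\iota_B)\circ\lambda_\cO^{-1}$, where $\lambda_\cO\colon\cO\otimes_\cO\cO\to\cO$ is the unitor. Since $\mu_A,\mu_B,\iota_A,\iota_B,\beta$ are morphisms of $\tt DG\cD M$, so is $\mu_{A\otimes B}$. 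I would then verify that $(A\otimes_\cO B,\mu_{A\otimes B},\iota_{A\otimes B})$ is a commutative monoid: associativity and unitality are coherence diagram chases in $\tt DG\cD M$, while commutativity uses that $\beta$ is a symmetry, i.e. $\beta_{B,A}\circ\beta_{A,B}=\id$, together with naturality and the hexagon axiom.

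Next I would establish functoriality and the monoidal constraints. For algebra morphisms $f\colon A\to A'$ and $g\colon B\to B'$, naturality of $\beta$ shows that $f\otimes g$ intertwines the promoted multiplications and units, so it is a morphism of $\tt DG\cD A$; this produces the bifunctor with the claimed values on objects and the same values on morphisms. The associator, unitors and braiding of $\tt DG\cD M$ then have to be promoted to isomorphisms of $\tt DG\cD A$: one checks, again from naturality and coherence, that each respects the promoted multiplications and units, after which the pentagon, triangle and hexagon identities hold in $\tt DG\cD A$ simply because they hold in $\tt DG\cD M$. The tensor unit is $\cO$, viewed as a commutative monoid via $\mu_\cO=\lambda_\cO$ and $\iota_\cO=\id_\cO$, with $\cO\otimes_\cO A\cong A$ implemented by the unitor, which is an algebra morphism.

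Finally I would identify the distinguished objects. For any $A$, the structure map $\iota_A\colon\cO\to A$ is an algebra morphism, and any algebra morphism $f\colon\cO\to A$ satisfies $f=f\circ\iota_\cO=\iota_A$ since $\iota_\cO=\id_\cO$ and $f$ preserves units; hence $\cO$ is initial. The zero object $\{0\}$ of $\tt DG\cD M$ carries the unique (trivial) monoid structure, and for any $A$ the zero morphism $A\to\{0\}$ is the only morphism of $\tt DG\cD M$ into $\{0\}$, since $\{0\}$ is terminal there, hence the only and trivially multiplicative morphism of $\tt DG\cD A$; thus $\{0\}$ is terminal. The one genuinely load-bearing ingredient, and the place I would be most careful, is the systematic use of the symmetry $\beta$: it is exactly what makes $\mu_{A\otimes B}$ associative and commutative and what makes the structure isomorphisms into algebra morphisms, so the Koszul sign conventions must be tracked consistently throughout; the remainder is formal coherence bookkeeping inherited from $\tt DG\cD M$.
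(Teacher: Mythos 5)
Your proof is correct, and in outline it parallels the paper's: both promote the $\tt DG\cD M$-tensor product to $\tt DG\cD A$, take $\cO$ as tensor unit, and then identify the initial and terminal objects; moreover your multiplication $(\mu_A\0\mu_B)\circ(\id_A\0\,\beta_{B,A}\0\id_B)$ is exactly the paper's explicit formula $(a\0 b)\star(a'\0 b')=(-1)^{\tilde a'\tilde b}(a\star_A a')\0(b\star_B b')$ once the Koszul sign in the braiding is unwound. The genuine difference is the level at which the verifications happen. The paper works with elements: it writes the sign by hand, declares the coherence checks straightforward, and, for the initial object, constructs the unique candidate $\zf:\cO\to A$, $\zf(f)=f\cdot 1_A$, and then explicitly verifies its $\Theta$-linearity using the two $\cD$-specific facts that vector fields act as derivations and annihilate $1_A$. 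You instead stay entirely inside the symmetric monoidal category $\tt DG\cD M$ and invoke the general fact that commutative monoids in a symmetric monoidal category again form a symmetric monoidal category; in particular your initial-object argument is slicker: the unique algebra map $\cO\to A$ is the structural unit $\iota_A$, which is $\cD$-linear and a chain map by definition of a commutative monoid in $\tt DG\cD M$, so no derivation computation is needed --- that computation is precisely what the paper's earlier identification (commutative monoid in ${\tt Mod}(\cD)$ $=$ commutative $\cO$-algebra with flat connection acting by derivations) already encodes. What the paper's concrete treatment buys is visibility: one sees exactly where the Koszul sign and the derivation property enter, which matters later when these structures are manipulated by hand (e.g.\ in the Sullivan-algebra constructions). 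What your categorical treatment buys is economy and generality: the argument works verbatim for commutative monoids in any symmetric monoidal category with a zero object, and it cleanly explains the paper's own cautionary remark that the initial object of $\tt DG\cD A$ is the monoidal unit $\cO$ rather than the initial object $\{0\}$ of $\tt DG\cD M$.
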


\begin{proof} Let $A_\bullet,B_\bullet\in\tt DG\cD A$. Consider homogeneous vectors $a\in A_{\tilde{a}}$, $a'\in A_{\tilde{a}'}$, $b\in B_{\tilde{b}}$, $b'\in B_{\tilde{b}'}$, such that $\tilde a+\tilde b=m$ and $\tilde a'+\tilde b'=n$. Endow now the tensor product $A_\bullet\0_\cO B_\bullet\in\tt DG\cD M$ with the multiplication $\star$ defined by \be\label{MultTensMod}(A_\bullet\0_\cO B_\bullet)_m\times (A_\bullet\0_\cO B_\bullet)_n\ni(a\0 b,a'\0 b')\mapsto $$ $$(a\0 b)\star(a'\0 b') = (-1)^{\tilde a'\tilde b}(a\star_A a')\0 (b\star_B b')\in (A_\bullet\0_\cO B_\bullet)_{m+n}\;,\ee where the multiplications of $A_\bullet$ and $B_\bullet$ are denoted by $\star_A$ and $\star_B$, respectively. The multiplication $\star$ equips $A_\bullet\0_\cO B_\bullet$ with a structure of differential graded $\cD$-algebra. Note also that the multiplication of $A_\bullet\in\tt DG\cD A$ is a $\tt DG\cD A$-morphism $\zm_A:A_\bullet\0_\cO A_\bullet\to A_\bullet\,$.\medskip

Further, the unit of the tensor product in $\tt DG\cD A$ is the unit $(\cO,0)$ of the tensor product in $\tt DG\cD M$.\medskip

Finally, let $A_\bullet,B_\bullet,C_\bullet,D_\bullet\in\tt DG\cD A$ and let $\zf:A_\bullet\to C_\bullet$ and $\psi:B_\bullet\to D_\bullet$ be two $\tt DG\cD A$-morphisms. Then the $\tt DG\cD M$-morphism $\zf\0\psi:A_\bullet\0_\cO B_\bullet\to C_\bullet\0_\cO D_\bullet$ is also a $\tt DG\cD A$-morphism.\medskip

All these claims (as well as all the additional requirements for a symmetric monoidal structure) are straightforwardly checked.\medskip

The initial and terminal objects in $\tt DG\cD A$ are the differential graded $\cD$-algebras $(\cO,0)$ and $(\{0\},0)$, respectively. As concerns the terminal object, this is the expected and easily verified result. The initial object however is not the same as the one in $\tt DG\cD M$. The problem with the initial object candidate $(\{0\},0)\,$, is that a $\tt DG\cD A$-morphism $\zf:(\{0\},0)\to (A_\bullet,d_A)$ has to map $0$ to $0_A$ and to $1_A$, what in only possible if $0_A=1_A$, i.e., if $A_\bullet=\{0\}\,$. As for $(\cO,0)$, the sole point to check is that the unique morphism $\zf:(\cO,0)\to (A_\bullet,d_A)$, which is necessarily defined by $$\zf(f)=\zf(f\cdot 1_\cO)=f\cdot \zf(1_\cO)=f\cdot 1_A\;,$$ is a $\tt DG\cD A$-morphism. For the latter, only $\cD$-linearity, i.e., $\Theta$-linearity, has to be checked. We get $$\zf(\nabla_\zy f)=\zf(\zy(f))=\zy(f)\cdot 1_A\;,$$ whereas $$\nabla_\zy(\zf(f))=\nabla_\zy(f\cdot 1_A)=\nabla_{\zy\circ f} 1_A=\zy(f)\cdot 1_A+\nabla_{f\circ \zy} 1_A=\zy(f)\cdot 1_A\;,$$ as in a differential graded $\cD$-algebra vector fields act as derivations and thus annihilate the unit.\end{proof}

Let us still mention the following

\begin{prop}\label{ProdTensProd} If $\zf:A_\bullet\to C_\bullet$ and $\psi:B_\bullet\to C_\bullet$ are $\tt DG\cD A$-morphisms, then $\chi:A_\bullet\0_\cO B_\bullet\to C_\bullet$, which is well-defined by $\chi(a\0 b)=\zf(a)\star_C \psi(b),$ is a $\tt DG\cD A$-morphism that restricts to $\zf$ (resp., $\psi$) on $A_\bullet$ (resp., $B_\bullet$).\end{prop}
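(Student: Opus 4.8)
The plan is to recognize this statement as the universal property of the coproduct in the category of commutative monoids: in any symmetric monoidal category the tensor product of two commutative monoids is their coproduct, and $\chi$ is precisely the comparison morphism out of $A_\bullet\0_\cO B_\bullet$. Accordingly I would proceed by verifying, one structure at a time, that the formula $\chi(a\0 b)=\zf(a)\star_C\psi(b)$ defines a $\tt DG\cD A$-morphism. First I would check that $\chi$ is well defined on the tensor product \emph{over} $\cO$: since the multiplication $\star_C$ is $\cO$-bilinear and $\zf,\psi$ are $\cO$-linear, the assignment $(a,b)\mapsto\zf(a)\star_C\psi(b)$ is $\cO$-balanced, hence factors through $A_\bullet\0_\cO B_\bullet$; it manifestly preserves degree, because $\zf,\psi$ do and $\star_C$ adds degrees.

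Next I would show $\chi$ is a $\tt DG\cD M$-morphism. The chain-map property is obtained by evaluating both $\chi\circ\p$ and $d_C\circ\chi$ on a homogeneous $a\0 b$, expanding $\p(a\0 b)=d_A a\0 b+(-1)^{\tilde a}a\0 d_B b$ with its Koszul sign, and using that $\zf,\psi$ are chain maps together with the graded Leibniz rule for $d_C$; the two resulting signs coincide. The $\Theta$-linearity is verified the same way: applying $\chi$ to $\nabla_\theta(a\0 b)=(\nabla_\theta a)\0 b+a\0(\nabla_\theta b)$ and invoking the $\Theta$-linearity of $\zf,\psi$ reproduces $\nabla_\theta(\chi(a\0 b))$, precisely because $\nabla_\theta$ acts as a derivation on the product of $C_\bullet$ (Equation (\ref{LeibDAlg})).

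It remains to check compatibility with the multiplications and units. Unitality is immediate, $\chi(1_A\0 1_B)=\zf(1_A)\star_C\psi(1_B)=1_C\star_C 1_C=1_C$. For multiplicativity I would expand $\chi\lp(a\0 b)\star(a'\0 b')\rp$ using the definition (\ref{MultTensMod}) of $\star$ and the fact that $\zf,\psi$ are algebra morphisms, obtaining $(-1)^{\tilde a'\tilde b}\,(\zf(a)\star_C\zf(a'))\star_C(\psi(b)\star_C\psi(b'))$, and then compare with $\chi(a\0 b)\star_C\chi(a'\0 b')=(\zf(a)\star_C\psi(b))\star_C(\zf(a')\star_C\psi(b'))$. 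Matching the two requires commuting $\psi(b)$ past $\zf(a')$ via the graded commutativity of $\star_C$, i.e. $\psi(b)\star_C\zf(a')=(-1)^{\tilde b\tilde a'}\zf(a')\star_C\psi(b)$, which produces exactly the sign $(-1)^{\tilde a'\tilde b}$ appearing in (\ref{MultTensMod}). This sign bookkeeping is the only delicate point of the argument; everything else is forced by functoriality.

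Finally, the restriction claim follows from $\chi(a\0 1_B)=\zf(a)\star_C\psi(1_B)=\zf(a)\star_C 1_C=\zf(a)$ and, symmetrically, $\chi(1_A\0 b)=\psi(b)$, so that composing $\chi$ with the canonical inclusions $A_\bullet\ni a\mapsto a\0 1_B$ and $B_\bullet\ni b\mapsto 1_A\0 b$ into $A_\bullet\0_\cO B_\bullet$ returns $\zf$ and $\psi$, respectively. I would close by remarking that all the remaining symmetric-monoidal coherence verifications are routine and have already been recorded in the preceding proposition.
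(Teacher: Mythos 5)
Your proof is correct, but it takes a genuinely different route from the paper's. The paper disposes of the whole statement in one line: $\chi=\zm_C\circ(\zf\0\psi)$. Both factors are already known to be $\tt DG\cD A$-morphisms by the preceding proposition --- $\zf\0\psi$ because the tensor product of two $\tt DG\cD A$-morphisms is again one, and $\zm_C$ because the multiplication of any object of $\tt DG\cD A$ is a $\tt DG\cD A$-morphism out of its tensor square --- so $\chi$ is a composite of morphisms, and the restriction claims follow by evaluating on $a\0 1_B$ and $1_A\0 b$. Your element-wise verification is essentially the content that the paper delegated to the words ``straightforwardly checked'' in that earlier proposition: in particular, the sign bookkeeping you carry out, matching the Koszul sign of (\ref{MultTensMod}) against the graded commutativity of $\star_C$, is exactly what makes $\zm_C$ multiplicative for the product $\star$ on $C_\bullet\0_\cO C_\bullet$. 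So your argument is self-contained and makes every check explicit, at the cost of redoing work the paper reuses; the paper's factorization is shorter and more structural, but opaque unless one unwinds the previous proposition. Your opening observation --- that $A_\bullet\0_\cO B_\bullet$ is the coproduct of commutative monoids in the symmetric monoidal category $\tt DG\cD M$, with $\chi$ the comparison morphism --- is a nice conceptual framing absent from the paper; note, though, that as written you use it only as motivation, since proving the coproduct property amounts to the same verifications you then perform.
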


\begin{proof} It suffices to observe that $\chi=\zm_C\circ(\zf\0 \psi)\,$.\end{proof}

\section{Finitely generated model structure on $\tt DG{\cD}M$}

All relevant information on model categories, small objects, and on cofibrantly and finitely generated model structures, can be found in Appendices \ref{ModCat}, \ref{Small}, and \ref{CofGenModCat}.\medskip

Let us recall that $\tt DG\cD M$ is the category ${\tt Ch}_+(\cD)$ of non-negatively graded chain complexes of left modules over the non-commutative unital ring $\cD=\cD_X(X)$ of differential operators of a smooth affine algebraic variety $X$. The remaining part of this section actually holds for any not necessarily commutative unital ring $R$ and the corresponding category ${\tt Ch}_+(R)$. We will show that ${\tt Ch}_+(R)$ is a finitely (and thus cofibrantly) generated model category.\medskip

In fact, most of the familiar model categories are cofibrantly generated. For instance, in the model category $\tt SSet$ of simplicial sets, the generating cofibrations $I$ (resp., the generating trivial cofibrations $J$) are the canonical simplicial maps $\p\zD[n]\to \zD[n]$ from the boundaries of the standard simplicial $n$-simplices to these simplices (resp., the canonical maps $\zL^r[n]\to \zD[n]$ from the $r$-horns of the standard $n$-simplices, $0\le r\le n$, to these simplices). The generating cofibrations and trivial cofibrations of the model category $\tt Top$ of topological spaces -- which is Quillen equivalent to $\tt SSet$ -- are defined similarly. The homological situation is analogous to the topological and combinatorial ones. In the case of ${\tt Ch}_+(R)$, the set $I$ of generating cofibrations (resp., the set $J$ of generating trivial cofibrations) is made (roughly) of the maps $S^{n-1}\to D^n$ from the $(n-1)$-sphere to the $n$-disc (resp., of the maps $0\to D^n$). In fact, the $n$-disc $D^n$ is the chain complex \be\label{Disc}D^n_{\bullet}: \cdots \to 0\to 0\to \stackrel{(n)}{R} \to \stackrel{(n-1)}{R}\to 0\to \cdots\to \stackrel{(0)}{0}\;,\ee whereas the $n$-sphere $S^n$ is the chain complex \be\label{Sphere}S^n_\bullet: \cdots \to 0\to 0\to \stackrel{(n)}{R}\to 0\to \cdots\to \stackrel{(0)}{0}\;.\ee Definition (\ref{Disc}), in which the differential is necessarily the identity of $R$, is valid for $n\ge 1$. Definition (\ref{Sphere}) makes sense for $n\ge 0$. We extend the first (resp., second) definition to $n=0$ (resp., $n=-1$) by setting $D^0_\bullet:=S^0_\bullet$ (resp., $S^{-1}_\bullet:=0_\bullet$). The chain maps $S^{n-1}\to D^n$ are canonical (in degree $n-1$, they necessarily coincide with $\id_R$), and so are the chain maps $0\to D^n$. We now define the set $I$ (resp., $J$) by \be\label{GenCof} I=\{\iota_n: S^{n-1} \to D^n, n\ge 0\}\ee $(\,$resp., \be\label{GenTrivCof}J=\{\zeta_n: 0 \to D^n, n\ge 1\}\;)\;.\ee

\begin{theo}\label{FinGenModDGDM}
For any unital ring $R$, the category ${\tt Ch}_+(R)$ of non-negatively graded chain complexes of left $R$-modules is a finitely $(\,$and thus a cofibrantly$\,)$ generated model category $(\,$in the sense of \cite{GS} and in the sense of \cite{Hov}$\,)$, with $I$ as its generating set of cofibrations and $J$ as its generating set of trivial cofibrations. The weak equivalences are the chain maps that induce isomorphisms in homology, the cofibrations are the injective chain maps with degree-wise projective cokernel $(\,$projective object in ${\tt Mod}(R)$$\,)$, and the fibrations are the chain maps that are surjective in $(\,$strictly$\,)$ positive degrees. Further, the trivial cofibrations are the injective chain maps $i$ whose cokernel $\coker(i)$ is strongly projective as a chain complex $(\,$strongly projective object $\coker(i)$ in ${\tt Ch}_+(R)$, in the sense that, for any chain map $c:\coker(i)\to C$ and any chain map $p:D\to C$, there is a chain map $\ell:\coker(i)\to D$ such that $p\circ\ell=i$, if $p$ is surjective in $(\,$strictly$\,)$ positive degrees$\,)$.\end{theo}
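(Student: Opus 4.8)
The plan is to invoke Kan's recognition theorem for cofibrantly generated model structures (the version recorded in \cite{GS}, compatible with \cite{Hov} and recalled in Appendix~\ref{CofGenModCat}): for the complete and cocomplete category ${\tt Ch}_+(R)$ (limits and colimits are formed degreewise), the class $W$ of quasi-isomorphisms, and the sets $I$, $J$ of \eqref{GenCof} and \eqref{GenTrivCof}, it suffices to check (i) $W$ has the two-out-of-three property and is closed under retracts; (ii) the domains of $I$ and $J$ are small relative to the corresponding cell complexes; (iii) the relative $J$-cell complexes lie in $W\cap\op{cof}(I)$; and (iv) $\op{inj}(I)=W\cap\op{inj}(J)$. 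Property (i) is immediate, since $H_\bullet$ is a functor, isomorphisms satisfy two-out-of-three, and a retract of an isomorphism of graded modules is an isomorphism. For (ii) I would observe that each $S^{n-1}$ and each $D^n$ is a bounded complex of finitely generated free modules, hence a compact (finite) object; this both feeds the small object argument and yields the stronger conclusion that the model structure is \emph{finitely} generated.

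First I would compute the relevant lifting classes explicitly. A direct inspection of the lifting square shows that $p\colon X\to Y$ has the right lifting property against $J=\{0\to D^n\}$ precisely when $p_n$ is surjective for every $n\ge 1$; these are the fibrations. Likewise, $p$ lies in $\op{inj}(I)$ exactly when, for each $n\ge 0$ and each cycle $a\in X_{n-1}$ and each $b\in Y_n$ with $p(a)=db$, there is $x\in X_n$ with $dx=a$ and $p(x)=b$; unwinding the case $n=0$ forces $p_0$ to be surjective, so that $\op{inj}(I)$ consists of the degreewise surjective quasi-isomorphisms (the trivial fibrations). Dually, the left lifting class $\op{cof}(I)$ is identified with the injective chain maps whose cokernel is degreewise projective: pushouts of $S^{n-1}\to D^n$ attach free summands, and a retract argument together with projective lifting against trivial fibrations gives both inclusions. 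Finally, a pushout of $0\to D^n$ along $0\to X$ is the inclusion $X\hookrightarrow X\oplus D^n$, so a relative $J$-cell complex is an inclusion $X\hookrightarrow X\oplus\big(\bigoplus_i D^{n_i}\big)$ with degreewise free, contractible cokernel; such a map is simultaneously in $\op{cof}(I)$ and in $W$, which is (iii). Since each $0\to D^n$ is itself in $\op{cof}(I)$ (its cokernel $D^n$ is free), we get $J\subseteq\op{cof}(I)$ and hence $\op{inj}(I)\subseteq\op{inj}(J)$; combined with $\op{inj}(I)\subseteq W$ this gives one half of (iv).

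The decisive step, and the one I expect to be the main obstacle, is the reverse inclusion $W\cap\op{inj}(J)\subseteq\op{inj}(I)$: a fibration that is a quasi-isomorphism must be a trivial fibration. The subtlety is that the fibration condition provides surjectivity only in \emph{strictly} positive degrees, whereas membership in $\op{inj}(I)$ also demands surjectivity in degree $0$. I would recover degree-$0$ surjectivity from the quasi-isomorphism property: given $y\in Y_0$, its class is hit by some $[x]\in H_0(X)$, so $y-p(x)=dw$ with $w\in Y_1$; writing $w=p_1(v)$ via surjectivity in degree $1$ gives $y=p_0(x+dv)$. Once $p$ is surjective in all degrees, the short exact sequence $0\to\ker p\to X\to Y\to 0$ and its long exact homology sequence show that $\ker p$ is acyclic; the required lift in the $\op{inj}(I)$-criterion is then produced by choosing any preimage $x_0$ of $b$, observing that $a-dx_0$ is a cycle in the acyclic complex $\ker p$, and correcting $x_0$ by a boundary there. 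This establishes (iv) and completes the hypotheses of the recognition theorem.

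It then remains to read off the last assertion, the description of the trivial cofibrations $\op{cof}(J)=\op{cof}(I)\cap W$. I would combine the already-established form of $\op{cof}(I)$ (injective with degreewise projective cokernel) with the weak-equivalence condition to conclude that the cokernel $C=\coker(i)$ of a trivial cofibration is a degreewise projective \emph{acyclic} complex, and conversely; the stated lifting characterization of such a $C$ against chain maps surjective in positive degrees is precisely the assertion that $C$ is a strongly projective object of ${\tt Ch}_+(R)$, which one checks by the same preimage-plus-correction manipulation used above. All remaining verifications (two-out-of-three bookkeeping and retract stability of the various classes) are routine.
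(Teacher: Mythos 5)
Your proposal is correct, but it follows a genuinely different route from the paper's. You construct the model structure from scratch by checking the hypotheses of the recognition theorem for cofibrantly generated model categories (Hovey, Theorem 2.1.19): you compute $\op{inj}(J)$ and $\op{inj}(I)$ by unwinding lifting squares, identify $\op{cof}(I)$ with the injections having degree-wise projective cokernel, observe that relative $J$-cell complexes are the inclusions $X\to X\oplus\bigoplus_i D^{n_i}$, and --- this is the mathematical core --- prove $W\cap\op{inj}(J)\subseteq\op{inj}(I)$ by recovering degree-$0$ surjectivity from the bijectivity of $H_0(p)$, deducing acyclicity of $\ker p$ from the long exact homology sequence, and producing the required lifts by your preimage-plus-correction argument; all of this is sound. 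The paper instead takes the model structure itself as known input: it quotes Dwyer--Spalinski (Theorem 7.2 of \cite{DS} for the model structure with exactly the stated weak equivalences, cofibrations and fibrations, and Proposition 7.19 of \cite{DS} for $\op{Fib}=\op{RLP}(J)$ and $\op{TrivFib}=\op{RLP}(I)$), so that the only points left to verify are the existence of all small limits and colimits, the smallness of the domains and codomains of $I$ and $J$ (via Hovey's Lemma 2.3.2 on bounded complexes of finitely presented modules), and the existence of functorial factorizations, which the small object argument then supplies; the strong-projectivity description of the trivial cofibrations is obtained by rerunning the proof of Hovey's Lemma 2.2.11. In effect, your verification of the recognition-theorem hypotheses re-proves the Dwyer--Spalinski results that the paper cites, which makes your argument self-contained but longer; conversely, the paper's proof is shorter and isolates exactly what is new, namely cofibrant (indeed finite) generation in both the \cite{GS} and \cite{Hov} senses. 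Be aware, though, that a complete write-up of your route still owes the reader three standard but non-trivial verifications that you only sketch: that every map in $\op{inj}(I)$ is a degree-wise surjective quasi-isomorphism (this needs the ``cycles lift along $\op{inj}(I)$-maps'' argument, not merely the $n=0$ case), that injections with degree-wise projective cokernel do lift against such maps (an inductive, degree-by-degree construction), and the smallness lemma for the spheres and discs.
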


\begin{proof}

The following proof uses the differences between the definitions of (cofibrantly generated) model categories given in \cite{DS}, \cite{GS}, and \cite{Hov}: we refer again to the Appendices \ref{ModCat}, \ref{Small}, and \ref{CofGenModCat}.\medskip

It is known that ${\tt Ch}_+(R)$, with the described weq-s, cofibrations, and fibrations is a model category (Theorem 7.2 in \cite{DS}). A model category in the sense of \cite{DS} contains all finite limits and colimits; the $\op{Cof}-\op{TrivFib}$ and $\op{TrivCof}-\op{Fib}$ factorizations are neither assumed to be functorial, nor, of course, to be chosen functorial factorizations. Moreover, we have $\op{Fib}=\op{RLP}(J)$ and $\op{TrivFib}=\op{RLP}(I)$ (Proposition 7.19 in \cite{DS}).\medskip

Note first that ${\tt Ch}_+(R)$ has all small limits and colimits, which are taken degree-wise.\medskip

Observe also that the domains and codomains $S^n$ ($n\ge 0$) and $D^n$ ($n\ge 1$) of the maps in $I$ and $J$ are bounded chain complexes of finitely presented $R$-modules (the involved modules are all equal to $R$). However, every bounded chain complex of finitely presented $R$-modules is $n$-small, $n\in\N$, relative to all chain maps (Lemma 2.3.2 in \cite{Hov}). Hence, the domains and codomains of $I$ and $J$ satisfy the smallness condition of a finitely generated model category, and are therefore small in the sense of the finite and transfinite definitions of a cofibrantly generated model category.\medskip

It thus follows from the Small Object Argument -- see Appendix \ref{CofGenModCat} -- that there exist in ${\tt Ch}_+(R)$ a functorial $\op{Cof}-\op{TrivFib}$ and a functorial $\op{TrivCof}-\op{Fib}$ factorization. Hence, the first part of Theorem \ref{FinGenModDGDM}.\medskip

As for the part on trivial cofibrations, its proof is the same as the proof of Lemma 2.2.11 in \cite{Hov}.\end{proof}

In view of Theorem \ref{FinGenModDGDM}, let us recall that any projective chain complex $(K,d)$ is degree-wise projective. Indeed, consider, for $n\ge 0$, an $R$-linear map $k_n:K_n\to N$ and a surjective $R$-linear map $p:M\to N$, and denote by $D^{n+1}(N)$ (resp., $D^{n+1}(M)$) the disc defined as in (\ref{Disc}), except that $R$ is replaced by $N$ (resp., $M$). Then there is a chain map $k:K\to D^{n+1}(N)$ (resp., a surjective chain map $\zp:D^{n+1}(M)\to D^{n+1}(N)$) that is zero in each degree, except in degree $n+1$ where it is $k_n\circ d_{n+1}$ (resp., $p$) and in degree $n$ where it is $k_n$ (resp., $p$). Since $(K,d)$ is projective as chain complex, there is a chain map $\ell:K\to D^{n+1}(M)$ such that $\zp\circ \ell =k$. In particular, $\ell_n:K_n\to M$ is $R$-linear and $p\circ \ell_n=k_n\,.$

\section{Finitely generated model structure on $\tt DG\cD A$}

\subsection{Adjoint functors between $\tt DG\cD M$ and $\tt DG\cD A$}\label{Adjunction}

We aim at transferring to $\tt DG\cD A$, the just described finitely generated model structure on $\tt DG\cD M$. Therefore, we need a pair of adjoint functors.

\begin{prop} The graded symmetric tensor algebra functor $\cS$ and the forgetful functor $\op{For}$ provide an adjoint pair $${\cal S}:{\tt DG\cD M}\rightleftarrows{\tt DG\cD A}:\op{For}\;$$ between the category of differential graded $\cD$-modules and the category of differential graded $\cD$-algebras.\end{prop}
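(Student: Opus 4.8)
The plan is to realize $\cS$ as the free graded-commutative-algebra functor on the symmetric monoidal category $({\tt DG\cD M},\otimes_\cO,\cO)$ and then to extract the adjunction from the universal property of free objects. First I would construct $\cS$ on objects. Given $M_\bullet\in{\tt DG\cD M}$, I form the tensor powers $M_\bullet^{\otimes_\cO n}$ (available since $({\tt DG\cD M},\otimes_\cO,\cO)$ is symmetric monoidal by Proposition \ref{MonoidEquiv2} and the discussion following it) and set
\[
\cS(M_\bullet):=\bigoplus_{n\ge 0}\big(M_\bullet^{\otimes_\cO n}\big)_{\Sigma_n}\,,
\]
the $\Sigma_n$-coinvariants for the action generated by the braiding $\zb$; since $\K$ has characteristic $0$, invariants and coinvariants agree and the relevant symmetrizers exist. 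Each tensor power carries the $\cD$-module structure of (\ref{TensDMod}), the tensor-product differential $\partial=\sum_i\id^{\otimes i}\otimes d\otimes\id^{\otimes(n-1-i)}$ with Koszul signs, and the concatenation product. Because the braiding $\zb$ is a chain map (this is exactly the role of the sign in its definition), the $\Sigma_n$-action is by chain automorphisms commuting with $\nabla$, so both $\partial$ and $\nabla$ descend to the coinvariants. On morphisms one sets $\cS(\phi)=\bigoplus_n(\phi^{\otimes n})_{\Sigma_n}$, with functoriality immediate.

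Next I would exhibit the unit $\eta_M:M_\bullet\to\op{For}(\cS(M_\bullet))$ as the inclusion of the $n=1$ summand $M_\bullet\subset\cS(M_\bullet)$, a morphism of $\tt DG\cD M$. The adjunction then reduces to the universal property: for every $A_\bullet\in{\tt DG\cD A}$ and every $\tt DG\cD M$-morphism $f:M_\bullet\to\op{For}(A_\bullet)$ there is a unique $\tt DG\cD A$-morphism $\tilde f:\cS(M_\bullet)\to A_\bullet$ with $\op{For}(\tilde f)\circ\eta_M=f$. For existence I would put
\[
\tilde f(m_1\cdots m_n)=f(m_1)\star_A\cdots\star_A f(m_n)\,,
\]
with $\tilde f$ equal to the unit $\zi_A:\cO\to A_\bullet$ on the $n=0$ summand. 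Well-definedness on the coinvariants uses the graded commutativity of $\star_A$ together with the sign in $\zb$; the chain-map property follows from $\partial$ and $d_A$ both being graded derivations; and $\cD$-linearity holds because $\nabla$ acts by the Leibniz rule on both sides while $f$ is $\cD$-linear. Uniqueness is clear, since $\cS(M_\bullet)$ is generated as a $\cD$-algebra by $\eta_M(M_\bullet)$.

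Finally, $f\mapsto\tilde f$ is the desired bijection $\op{Hom}_{\tt DG\cD A}(\cS(M_\bullet),A_\bullet)\cong\op{Hom}_{\tt DG\cD M}(M_\bullet,\op{For}(A_\bullet))$, with inverse $g\mapsto\op{For}(g)\circ\eta_M$, and naturality in both arguments is a routine check. I expect the main obstacle to be the first step, namely confirming that $\cS(M_\bullet)$ genuinely lands in $\tt DG\cD A$: one must check that the tensor-product differential and the connection $\nabla$ descend to the $\Sigma_n$-coinvariants and interact correctly with the Koszul signs carried by $\zb$, so that $\partial^2=0$, $\partial$ is a graded derivation of the product, and each $\nabla_\zy$ is a derivation. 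Once $\cS$ is known to take values in $\tt DG\cD A$, the adjunction is the formal universal property of free objects and is unproblematic.
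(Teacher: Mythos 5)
Your proposal is correct and takes essentially the same route as the paper: construct the free graded symmetric algebra $\cS M_\bullet$ as an object of $\tt DG\cD A$ (checking that the differential and the $\cD$-action descend and act as derivations), take the unit to be the inclusion of $M_\bullet$, and obtain the Hom-set bijection from freeness via the same extension formula $\tilde f(m_1\odot\cdots\odot m_k)=f(m_1)\star_A\cdots\star_A f(m_k)$. The only difference is that you realize $\cS M_\bullet$ as the $\mathbb{S}_n$-coinvariants $\bigoplus_{n}(M_\bullet^{\0_\cO n})_{\mathbb{S}_n}$ rather than as the quotient $\0^\ast_\cO M_\bullet/\cI$ by the graded-commutativity ideal; in characteristic $0$ these are isomorphic as differential graded $\cD$-algebras, a fact the paper itself records in its appendix on invariants versus coinvariants, so this is a cosmetic rather than a substantive divergence.
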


\begin{proof} For any $M_\bullet\in{\tt DG\cD M}$, we get $$\0_\cO^\ast M_\bullet=\cO\oplus\bigoplus_{n\ge 1}M_\bullet^{\0_\cO n}\in{\tt DG\cD M}\;.$$ Moreover, $\0_\cO^\ast M_\bullet$ is the free associative unital $\cO$-algebra over the $\cO$-module $M_\bullet\,.$ When passing to graded symmetric tensors, we divide by the $\cO$-ideal $\cI$ generated by the elements of the type $m_k\0 m_\ell-(-1)^{k\ell}m_\ell\0 m_k$, where $m_k\in M_k$ and $m_\ell\in M_\ell\,$. This ideal is also a graded sub $\cD$-module that is stable for the differential, so that $\cI$ is actually a sub {\small DG} $\cD$-module. Therefore, the free graded symmetric unital $\cO$-algebra \be\label{Alg1}{\cal S}_\cO^\ast M_\bullet=\0_\cO^\ast M_\bullet/{\cal I}\ee is also a {\small DG} $\cD$-module. Of course, the graded symmetric tensor product $\odot$ of graded symmetric tensors $[S],[T]$ is defined by \be\label{Mult1}[S]\odot [T]=[S\0 T]\;.\ee Since the differential (resp., the $\cD$-action) on ${\cal S}_\cO^\ast M_\bullet$ is induced by that on $\0_\cO^\ast M_\bullet\,$, it is clear that the differential is a graded derivation of (resp., that vector fields act as derivations on) the graded symmetric tensor product. Hence, ${\cal S}_\cO^\ast M_\bullet\in {\tt DG\cD A}$. The definition of $\cS$ on morphisms is obvious.\medskip

We now prove that the functors $\op{For}$ and $\cal S$ are adjoint, i.e., that \be\label{Adjoint}\h_{\tt DG\cD A}({\cal S}_\cO^\ast M_\bullet,A_\bullet)\simeq \h_{\tt \tt DG\cD M}(M_\bullet,\op{For}A_\bullet)\;,\ee functorially in $M_\bullet\in{\tt DG\cD M}$ and $A_\bullet\in{\tt DG\cD A}\,$.

Since the inclusion $$i:M_\bullet \rightarrow {\cal S}_\cO^\ast M_\bullet=\cO_\bullet\oplus M_\bullet\oplus \bigoplus_{n\ge 2}S_\cO^n M_\bullet$$ is obviously a $\tt DG\cD M$-map, any $\tt DG\cD A$-map $\Phi:{\cal S}_\cO^\ast M_\bullet\to A_\bullet$ gives rise to a $\tt DG\cD M$-map $\Phi\circ i:M_\bullet\to \op{For}A_\bullet\,$.

Conversely, let $\zf:M_\bullet\to \op{For}A_\bullet$ be a $\tt DG\cD M$-map. Since ${\cal S}_\cO^\ast M_\bullet$ is free in the category $\tt GCA$ of graded commutative associative unital graded $\cO$-algebras, a $\tt GCA$-morphism is completely determined by its restriction to the graded $\cO$-module $M_\bullet\,$. Hence, the extension $\bar\zf:{\cal S}_\cO^\ast M_\bullet\to A_\bullet$ of $\zf$, defined by $\bar\zf(1_\cO)=1_{A}$ and by $$\bar\zf(m_1\odot\ldots\odot m_k)=\zf(m_1)\star_A\ldots\star_A\zf(m_k)\;,$$ is a $\tt GCA$-morphism. This extension is also a $\tt DG\cD A$-map, i.e., a $\tt DG\cD M$-map that respects the multiplications and the units, if it intertwines the differentials and is $\cD$-linear. These requirements, as well as functoriality, are straightforwardly checked.\end{proof}

Recall that a free object in a category $\tt D$ over an object $C$ in a category $\tt C$, such that there is a forgetful functor $\op{For}:\tt D\to C$, is a universal pair $(F(C),i)$, where $F(C)\in\tt D$ and $i\in\op{Hom}_{\tt C}(C,\op{For} F(C))\,$.

\begin{rem}\label{FreeDGDA} Equation (\ref{Adjoint}) means that ${\cal S}_\cO^\star M_\bullet$ is the {\bf free differential graded $\cD$-algebra} over the differential graded $\cD$-module $M_\bullet\,$.\end{rem}

A definition of $\cS_\cO^\ast M_\bullet$ via invariants can be found in Appendix \ref{InvCoinv}.

\subsection{Relative Sullivan $\cD$-algebras}\label{RSDA}

To transfer the model structure from $\tt DG\cD M$ to $\tt DG\cD A$, some preparation is necessary. In the present subsection, we define relative Sullivan algebras over the ring of differential operators. Background information on relative Sullivan algebras over a field can be found in Appendix \ref{RSKA}. \medskip

Let $V$ be a {\it free non-negatively graded $\cD$-module}. If we denote its basis by $(v_\za)_{\za\in J}$, we get $$V=\bigoplus_{\za\in J}\,\cD\cdot v_\za\;.$$ In almost all examples that occur in this text, the $v_\za$-s are formal generators and the elements $v\in V$ are formal linear combinations $v=\sum_{\za\in J} D^\za\cdot v_\za$, where only a finite number of coefficients $D^\za\in\cD$ are non-zero. We will further assume that $V$ is non-negatively graded, $$V_\bullet=\bigoplus_{\ell\in\N}V_\ell\;.$$ Often the basis vectors $v_\za$ have homogeneous degrees in $\N$ and the latter decomposition is induced by these degrees. In the next definition, we ask that the index set $J$ be a {\it well-ordered set}, see Appendix \ref{Small}. Even if the basis vectors have homogeneous degrees, we do a priori not require that this well-ordering $\le$ of $J$ be compatible with the degree $\deg$ in $V$, i.e., we do not suppose that \be\label{minimal}\za\le\zb \Rightarrow \deg v_\za\le\deg v_\zb\;.\ee

Examples of such free non-negatively graded $\cD$-modules are the $n$-sphere \be\label{Sphere2}S^n_\bullet: \cdots \to 0\to 0\to \cD\cdot 1_n\to 0\to \cdots\to 0\;\ee ($n\ge 0$) and the $n$-disc  \be\label{Disc2}D^n_{\bullet}: \cdots \to 0\to 0\to \cD\cdot \mathbb{I}_n \to \cD\cdot s^{-1}\mathbb{I}_n\to 0\to \cdots\to 0\;\ee ($n\ge 1\,$). Observe that, in view of future needs, we used different notation for the generator $1\in\cO$ of $\cD$ ($s^{-1}$ denotes the desuspension operator). Of course, if $D\in\cD$, we can identify $D\cdot 1_n, D\cdot \mathbb{I}_n, D\cdot s^{-1}\mathbb{I}_n$ with $D$, whenever no confusion arises.\medskip

If we endow $V$ with the differential $0$, we get $V\in\tt DG\cD M$, so that $\cS^\star_\cO V_\bullet\in\tt DG\cD A$, again with differential $0$. Let now $(A_\bullet,d_A)\in\tt DG\cD A$. The tensor product $A_\bullet\0_\cO\cS^\star_\cO V_\bullet$, where $A$ is considered, not with its original differential $d_A$, but with differential $0$, is a {\small DG$\cD$A} with differential $0$. In the following definition, we assume that this tensor product {\small G$\cD$A} is equipped with a differential $d$, which makes it an element $$(A_\bullet\0_\cO\cS^\star_\cO V_\bullet,d)\in\tt DG\cD A$$ that contains $(A_\bullet,d_A)$ as sub-{\small DG$\cD$A}. The point is here that $(A_\bullet,d_A)$ is a differential submodule of the tensor product differential module, but that usually the module $\cS^\star_\cO V_\bullet$ is not. The condition that $(A_\bullet,d_A)$ be a sub-{\small DG$\cD$A} can be rephrased by asking that the inclusion $$A_\bullet\ni a\mapsto a\0 1\in A_\bullet\0_\cO\cS^\star_\cO V_\bullet\;$$ be a $\tt DG\cD A$-morphism. This algebra morphism condition or subalgebra condition would be automatically satisfied, if the differential $d$ on $A_\bullet\0_\cO\cS^\star_\cO V_\bullet$ was obtained as \be\label{split}d=d_A\0 \id + \id\0 d_\cS\ee from the differential $d_A$ on $A_\bullet$ and a differential $d_\cS$ on $\cS^\star_\cO V_\bullet$. However, as mentioned, this is generally not the case.\medskip

We omit in the sequel $\bullet,$ $\star,$ as well as subscript $\cO$, provided clarity does not suffer hereof. Further, to avoid confusion, we sometimes substitute $\boxtimes$ to $\0$ to emphasize that the differential $d$ of $A\boxtimes\cS V$ is not necessarily obtained from the differential $d_A$ and a differential $d_\cS$.

\begin{defi} A {\bf relative Sullivan $\cD$-algebra} $(\,${\small RS$\cD\!$A}$\,)$ is a $\tt DG\cD A$-morphism
$$(A,d_A)\to(A\boxtimes \cS V,d)\;$$ that sends $a\in A$ to $a\0 1\in A\boxtimes \cS V$. Here $V$ is a free non-negatively graded $\mathcal{D}$-module, which admits a homogeneous basis $(v_\za)_{\za\in J}$ that is indexed by a well-ordered set $J$, and is such that \be\label{Lowering}d v_\za \in A\boxtimes \cS V_{<\za}\;,\ee for all $\za\in J$. In the last requirement, we set $V_{<\za}:=\bigoplus_{\zb<\za}\cD\cdot v_\zb\,$. We refer to Property (\ref{Lowering}) by saying that $d$ is {\bf lowering}.

A {\small RS$\cD\!$A} with Property (\ref{minimal}) $(\,$resp., with Property (\ref{split}); over $(A,d_A)=(\cO,0)$$\,)$ is called a {\bf minimal} {\small RS$\cD\!$A} $(\,$resp., a {\bf split} {\small RS$\cD\!$A}; a {\bf Sullivan $\cD$-algebra} $(\,${\small S$\cD\!$A}$\,)$$\,)$  and it is often simply denoted by $(A\boxtimes \cS V,d)$ $(\,$resp., $(A\otimes \cS V,d)$; $(\cS V,d)$$\,)$.
\end{defi}

The next two lemmas are of interest for the split situation.

\begin{lem}\label{DiffGen} Let $(v_{\za})_{\za\in I}$ be a family of generators of homogeneous non-negative degrees, and let $$V:=\langle v_\za: \za\in I\ra:=\bigoplus_{\za\in I}\,\cD\cdot v_\za$$ be the free non-negatively graded $\cD$-module over $(v_\za)_{\za\in I}$. Then, any degree $-1$ map $d\in {\tt Set}((v_\za),V)$ uniquely extends to a degree $-1$ map $d\in{\tt \cD M}(V,V)$. If moreover $d^2=0$ on $(v_\za)$, then $(V,d)\in\tt DG\cD M\,.$  \end{lem}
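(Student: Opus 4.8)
The plan is to invoke the universal property of the free graded $\cD$-module $V$, adapted to the degree $-1$, $\cD$-linear setting. First I would fix the normal form of a general element: since $V=\bigoplus_{\za\in I}\cD\cdot v_\za$ is free, every $v\in V$ admits a \emph{unique} expansion $v=\sum_{\za\in I}D^\za\cdot v_\za$ with $D^\za\in\cD$ almost all zero. This uniqueness is exactly what makes the defining formula unambiguous, and it is the only place where freeness is used. I then define the extension by $d\big(\sum_\za D^\za\, v_\za\big):=\sum_\za D^\za\, d(v_\za)$, where the right-hand side uses the prescribed values $d(v_\za)\in V$.

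Next I would verify that this $d$ has the required properties. Additivity is immediate from the formula. For $\cD$-linearity, given $E\in\cD$ one has $E\cdot v=\sum_\za (E D^\za)\, v_\za$, whence $d(E\cdot v)=\sum_\za (E D^\za)\, d(v_\za)=E\cdot d(v)$; no Koszul sign intervenes, because the elements of $\cD$ are homogeneous of degree $0$, so the sign factor $(-1)^{|d|\,|E|}$ is trivial. For the degree, each homogeneous component $V_n$ is spanned over $\cD$ by the generators $v_\za$ with $\deg v_\za=n$ (as $\cD$ preserves the grading), and $d$ sends these to $d(v_\za)$ of degree $n-1$; in particular, if $\deg v_\za=0$ the prescribed value lies in $V_{-1}=0$, so $d$ vanishes there automatically, consistently with $V$ being non-negatively graded. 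Uniqueness is then forced: any degree $-1$, $\cD$-linear $\tilde d$ agreeing with $d$ on the $v_\za$ must satisfy $\tilde d\big(\sum_\za D^\za v_\za\big)=\sum_\za D^\za\, d(v_\za)$, so it coincides with the map just built. This gives $d\in{\tt \cD M}(V,V)$.

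For the final assertion I would compute $d^2$ on a general element by pushing $d$ through the $\cD$-coefficients twice. Writing $d(v_\za)=\sum_\zb E^{\za\zb}\, v_\zb$, the hypothesis $d^2=0$ on generators reads $d^2(v_\za)=\sum_\zb E^{\za\zb}\, d(v_\zb)=0$. Then for $v=\sum_\za D^\za v_\za$, additivity together with the $\cD$-linearity already established yields $d^2(v)=d\big(\sum_\za D^\za\, d(v_\za)\big)=\sum_\za D^\za\, d^2(v_\za)=0$, so $d^2=0$ on all of $V$ and hence $(V,d)\in\tt DG\cD M$.

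I expect no genuine obstacle: the content is precisely the universal property of a free module, and the proof is a short bookkeeping argument. The only points that need care are the uniqueness of the normal form (freeness), the absence of Koszul signs since $\cD$ lives in degree $0$, and the forced vanishing of $d$ on degree-$0$ generators imposed by non-negative grading.
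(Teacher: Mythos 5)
Your proof is correct, and it is precisely the routine freeness argument that the paper itself leaves implicit: Lemma \ref{DiffGen} is stated there without any proof, being treated as an immediate consequence of the universal property of the free $\cD$-module. The three points you single out — well-definedness via the unique expansion $v=\sum_\za D^\za\cdot v_\za$, the absence of Koszul signs because $\cD$ is concentrated in homological degree $0$, and the forced vanishing of $d$ on degree-$0$ generators since $V_{-1}=0$ — are exactly the details worth making explicit, and your $\cD$-linearity argument for propagating $d^2=0$ from the generators to all of $V$ closes the last assertion correctly.
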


Since $\cS V$ is the free differential graded $\cD$-algebra over the differential graded $\cD$-module $V$, a morphism $f\in {\tt DG\cD A}(\cS V,B),$ valued in $(B,d_B)\in {\tt DG\cD A}$, is completely defined by its restriction $f\in {\tt DG\cD M}(V,B)$. Hence, the

\begin{lem}\label{MorpGen} Consider the situation of Lemma \ref{DiffGen}. Any degree 0 map $f\in {\tt Set}((v_\za), B)$ uniquely extends to a morphism $f\in{\tt G\cD M}(V,B)$. Furthermore, if $d_B\,f=f\,d$ on $(v_\za)$, this extension is a morphism $f\in{\tt DG\cD M}(V,B),$ which in turn admits a unique extension $f\in{\tt DG\cD A}(\cS V,B)$.\end{lem}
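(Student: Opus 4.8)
The plan is to treat the three assertions in sequence, each resting on the universal property invoked in the previous one.

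First I would establish the extension to a graded $\cD$-module morphism. Since $V=\bigoplus_{\za\in I}\cD\cdot v_\za$ is \emph{free}, every $v\in V$ has a unique expression $v=\sum_{\za\in I}D^\za\cdot v_\za$ with only finitely many nonzero $D^\za\in\cD$. Given a degree-preserving set map $f\colon(v_\za)\to B$, I define its extension by $f(v):=\sum_{\za\in I}D^\za\cdot f(v_\za)$. Freeness guarantees this is well defined, $\cD$-linearity is immediate from the formula, and the extension respects the grading because each $f(v_\za)$ has the same homogeneous degree as $v_\za$ while $\cD$ acts in a degree-preserving way. Any $\cD$-linear extension is forced to obey the displayed formula, so the extension is unique; this is exactly the universal property of the free graded $\cD$-module, yielding $f\in{\tt G\cD M}(V,B)$.

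Next I would upgrade this to a chain map under the hypothesis $d_B f=f d$ on the generators. The crucial input is Lemma \ref{DiffGen}: the differential $d$ on $V$ is itself $\cD$-linear (a degree $-1$ ${\tt \cD M}$-map), so for $v=\sum_\za D^\za\cdot v_\za$ one has $dv=\sum_\za D^\za\cdot d v_\za$. Using $\cD$-linearity of $f$, then the generator-level identity, then $\cD$-linearity of $d_B$, I compute
$$f(dv)=\sum_\za D^\za\cdot f(d v_\za)=\sum_\za D^\za\cdot d_B f(v_\za)=d_B\Big(\sum_\za D^\za\cdot f(v_\za)\Big)=d_B f(v)\;.$$
No Koszul signs intervene, as $\cD$ sits in chain degree $0$. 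Hence $f$ intertwines the differentials on all of $V$, i.e.\ $f\in{\tt DG\cD M}(V,B)$.

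Finally I would extend this ${\tt DG\cD M}$-morphism to a ${\tt DG\cD A}$-morphism on $\cS V$. This is immediate from the freeness recorded in Remark \ref{FreeDGDA}: the adjunction isomorphism (\ref{Adjoint}), $\h_{\tt DG\cD A}(\cS V,B)\simeq\h_{\tt DG\cD M}(V,\op{For}B)$, carries the morphism just constructed to the unique $\bar f\in{\tt DG\cD A}(\cS V,B)$ determined by $\bar f(1_\cO)=1_B$ and $\bar f(v_1\odot\cdots\odot v_k)=f(v_1)\star_B\cdots\star_B f(v_k)$ on generators (cf.\ (\ref{Mult1})); bijectivity of the adjunction gives uniqueness. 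There is no single hard step here, the statement being essentially a chain of universal properties. The one point meriting care is the differential-compatibility propagation in the second paragraph, which works \emph{only} because Lemma \ref{DiffGen} guarantees that $d$ on $V$ is $\cD$-linear, so that $d$ may be pushed through the $\cD$-coefficients before the generator-level identity is applied; were $d$ not $\cD$-linear, commuting with differentials on generators would not suffice.
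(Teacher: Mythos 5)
Your proposal is correct and follows essentially the same route as the paper: freeness of $V=\bigoplus_\za\cD\cdot v_\za$ gives the unique ${\tt G\cD M}$-extension, $\cD$-linearity of $d$ (Lemma \ref{DiffGen}) and of $d_B$ propagates the generator-level identity $d_B f=f d$ to all of $V$, and the adjunction (\ref{Adjoint}) (freeness of $\cS V$, Remark \ref{FreeDGDA}) yields the unique ${\tt DG\cD A}$-extension. The paper leaves these steps implicit, merely citing the freeness of $\cS V$ before stating the lemma; you have simply written out the details it suppresses.
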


\subsection{Quillen's transfer theorem}

We use the adjoint pair \be\label{Ad}{\cal S}:{\tt DG\cD M}\rightleftarrows{\tt DG\cD A}:\op{For}\;\ee to transfer the cofibrantly generated model structure from the source category $\tt DG\cD M$ to the target category $\tt DG\cD A$. This is possible if Quillen's transfer theorem \cite{GS} applies.

\begin{theo}\label{QTT}
Let $ F : {\tt C} \rightleftarrows {\tt D} : G $ be a pair of adjoint functors. Assume that $\tt C$ is a cofibrantly generated model category and denote by $I$ (resp., $J$) its set of generating cofibrations (resp., trivial cofibrations). Define a morphism $f : X \to Y$ in $\tt D$ to be a weak equivalence (resp., a fibration), if $Gf$ is a weak equivalence (resp., a fibration) in $\tt C$. If
\begin{enumerate}
\item
the right adjoint $G : {\tt D} \to {\tt C}$ commutes with sequential colimits, and
\item
any cofibration in $\tt D$ with the {\small LLP} with respect to all fibrations is a weak equivalence,
\end{enumerate}
then $\tt D$ is a cofibrantly generated model category that admits $\{ Fi: i \in I \}$ (resp., $\{ Fj : j \in J \}$) as set of generating cofibrations (resp., trivial cofibrations).
\end{theo}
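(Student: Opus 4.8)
The plan is to follow the standard proof of the transfer theorem: transport the generating data across the adjunction, run the Small Object Argument in ${\tt D}$, and then check the model axioms, the only genuinely non-formal inputs being hypotheses (1) and (2). First I would fix the candidate classes. Write $\op{W}$ and $\op{Fib}$ for the weak equivalences and fibrations in ${\tt D}$ defined as in the statement (preimages under $G$), and \emph{define} the cofibrations to be $\op{Cof}=\op{LLP}(\op{Fib}\cap\op{W})$, the maps with the left lifting property against the trivial fibrations. Setting $FI=\{Fi:i\in I\}$ and $FJ=\{Fj:j\in J\}$, the key preliminary observation is purely formal: by the adjunction $F\dashv G$, a map $p$ in ${\tt D}$ lies in $\op{RLP}(Fi)$ iff $Gp$ lies in $\op{RLP}(i)$, so that $\op{RLP}(FI)=G^{-1}(\op{RLP}(I))=\op{Fib}\cap\op{W}$ and $\op{RLP}(FJ)=G^{-1}(\op{RLP}(J))=\op{Fib}$, using that ${\tt C}$ is cofibrantly generated. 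Thus $FI$ detects the trivial fibrations of ${\tt D}$ and $FJ$ detects the fibrations of ${\tt D}$, exactly as a cofibrantly generated structure requires.

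Next I would install the two functorial factorizations via the Small Object Argument applied to $FI$ and to $FJ$. For this I must verify the smallness hypothesis: if $A$ is a (co)domain of a map in $I$, then $\op{Hom}_{\tt D}(FA,-)\simeq\op{Hom}_{\tt C}(A,G(-))$, and since hypothesis (1) guarantees that $G$ commutes with the relevant sequential colimits while $A$ is small in ${\tt C}$, the object $FA$ is small in ${\tt D}$ relative to the cell complexes in question. The Small Object Argument then yields functorial $(\op{cell}(FI),\op{RLP}(FI))$ and $(\op{cell}(FJ),\op{RLP}(FJ))$ factorizations; by the previous paragraph the first is a $\op{Cof}-\op{TrivFib}$ factorization (the left factor lies in $\op{cell}(FI)\subseteq\op{LLP}(\op{RLP}(FI))=\op{Cof}$) and the second is a candidate $\op{TrivCof}-\op{Fib}$ factorization. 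Here I would also record the routine facts: two-out-of-three and closure under retracts for $\op{W}$ and $\op{Fib}$ hold because $G$ is a functor and these classes are $G$-preimages, cofibrations are closed under retracts because they are defined by a lifting property, and ${\tt D}$ is bicomplete (as is the intended ${\tt D}=\tt DG\cD A$).

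The heart of the argument, and the step where hypothesis (2) is indispensable, is the identification $\op{Cof}\cap\op{W}=\op{LLP}(\op{Fib})$. The inclusion $\op{LLP}(\op{Fib})\subseteq\op{Cof}$ is immediate from $\op{Fib}\cap\op{W}\subseteq\op{Fib}$; and any map in $\op{LLP}(\op{Fib})$ is then a cofibration with the left lifting property against all fibrations, hence a weak equivalence \emph{by hypothesis (2)}, giving $\op{LLP}(\op{Fib})\subseteq\op{Cof}\cap\op{W}$. For the reverse inclusion I would run the retract argument: given $f\in\op{Cof}\cap\op{W}$, factor $f=p\circ i$ with $i\in\op{cell}(FJ)\subseteq\op{LLP}(\op{Fib})$ and $p\in\op{Fib}$; since $i\in\op{W}$ by what was just shown and $f\in\op{W}$, two-out-of-three forces $p\in\op{Fib}\cap\op{W}$, so the cofibration $f$ lifts against the trivial fibration $p$, and the standard retract argument exhibits $f$ as a retract of $i$, whence $f\in\op{LLP}(\op{Fib})$. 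This single equality simultaneously supplies the missing half of the lifting axiom (trivial cofibrations lift against fibrations) and certifies that the $\op{cell}(FJ)$-factor produced above is a genuine trivial cofibration, so the second factorization is legitimate.

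I expect this third paragraph to be the main obstacle: everything before it is formal manipulation of the adjunction and of the Small Object Argument, whereas closing the loop between ``cofibration with the left lifting property against all fibrations'' and ``weak equivalence'' is precisely the content that the transfer machinery cannot produce on its own and that must be imported as hypothesis (2). With $\op{Cof}\cap\op{W}=\op{LLP}(\op{Fib})$ in hand, the remaining axioms follow: the $\op{Cof}-\op{TrivFib}$ lifting is the very definition of $\op{Cof}$, both factorizations are now established, and the structure is cofibrantly generated with $FI$ and $FJ$ as generating sets by construction.
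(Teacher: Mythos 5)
Your proof is correct, but note that the paper itself never proves Theorem \ref{QTT}: it imports the transfer theorem from \cite{GS} and devotes its effort instead to verifying hypotheses (1) and (2) for the specific adjunction ${\cal S}:{\tt DG\cD M}\rightleftarrows{\tt DG\cD A}:\op{For}$. Your argument --- transporting lifting problems across the adjunction to get $\op{RLP}(FI)=\op{TrivFib}$ and $\op{RLP}(FJ)=\op{Fib}$, running the small object argument with smallness of $FA$ deduced from smallness of $A$ together with hypothesis (1), and closing the loop via the retract argument to establish $\op{Cof}\cap\op{W}=\op{LLP}(\op{Fib})$ using hypothesis (2) --- is precisely the standard proof of the cited result, so it matches the intended (external) proof rather than anything reproduced in the paper. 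The only step to watch is the smallness claim: sequential smallness of $A$ in $\tt C$ must apply to the sequences $G(X_\beta)$ arising from $FI$-cell and $FJ$-cell complexes, which is not automatic if $A$ is only small relative to cofibrations of $\tt C$; this is harmless in the paper's application, where the domains and codomains of $I$ and $J$ are small relative to \emph{all} morphisms of $\tt DG\cD M$.
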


Of course, in this version of the transfer principle, the mentioned model structures are cofibrantly generated model structures in the sense of \cite{GS}.\medskip

Condition 2 is the main requirement of the transfer theorem. It can be checked using the following lemma \cite{GS}:

\begin{lem}\label{SuffCondFor2}
Assume in a category $\tt D$ (which is not yet a model category, but has weak equivalences, and fibrations),
\begin{enumerate}
\item there is a functorial fibrant replacement functor, and
\item every object has a natural path object, i.e., for any $D\in \tt D$, we have a natural commutative
diagram

\begin{center}
\begin{tikzpicture}
\node(C){$D$};
\node(A)[right of=C, xshift=2cm]{$D\times D$};
\node(P)[above of=A,yshift=2cm]{$\op{Path}(D)$};
\draw[->](C) to node[above]{${{\Delta}}$} (A);
\draw[->](C) to node{$\hspace{-30pt}i$} (P);
\draw[->](P) to node[below]{$\hspace{17pt} q$} (A);
%\draw[->](A) to node [below]{$\hspace{15pt} pr_1$}  (P);
\end{tikzpicture}
\end{center}

\end{enumerate}
where $\zD$ is the diagonal map, $i$ is a weak equivalence and $q$ is a fibration.
Then every cofibration in $\tt D$ with the {\small LLP} with respect to all fibrations is a weak
equivalence.
\end{lem}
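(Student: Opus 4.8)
The plan is to establish the conclusion directly via the classical \emph{path-object argument}, applied to an arbitrary map $f\colon A\to B$ carrying the left lifting property ({\small LLP}) with respect to all fibrations (the ``cofibration with the {\small LLP}'' of the statement). Throughout I would use that the weak equivalences of $\tt D$ --- which in the situation of Theorem \ref{QTT} are created by the functor $G$ from the weak equivalences of the model category $\tt C$ --- satisfy both the two-out-of-three and the two-out-of-six (saturation) properties, since $G$ is a functor and both hold in any model category. I write $R$ for the functorial fibrant replacement, $\eta_X\colon X\to RX$ for the attached natural weak equivalence with $RX$ fibrant, and $i_D\colon D\to\mathrm{Path}(D)$, $q_D=(q_0,q_1)\colon\mathrm{Path}(D)\to D\times D$ for the structure maps of the natural path object, so that $q_D\,i_D=\Delta$ and hence $q_0\,i_D=q_1\,i_D=\mathrm{id}_D$.

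First I would lift $\eta_A$ through $f$. As $RA$ is fibrant, the map $RA\to\ast$ to the terminal object is a fibration, so the {\small LLP} of $f$ against it solves the square with top edge $\eta_A$, left edge $f$, and bottom edge the canonical map $B\to\ast$; this produces $s\colon B\to RA$ with $s\circ f=\eta_A$.

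Next I would build a right homotopy from $\eta_B$ to $Rf\circ s$ by lifting $f$ against the path object of the fibrant object $RB$. Consider the square with left edge $f$, right edge the fibration $q_{RB}\colon\mathrm{Path}(RB)\to RB\times RB$, top edge $i_{RB}\circ\eta_B\circ f$, and bottom edge $(\eta_B,\,Rf\circ s)\colon B\to RB\times RB$. It commutes: the top-then-right composite is $q_{RB}\,i_{RB}\,\eta_B f=\Delta\,\eta_B f=(\eta_B f,\eta_B f)$, while the left-then-bottom composite is $(\eta_B f,\,Rf\,s\,f)=(\eta_B f,\,Rf\,\eta_A)=(\eta_B f,\eta_B f)$, using naturality of $\eta$ and the relation $s f=\eta_A$. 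The {\small LLP} of $f$ then furnishes $H\colon B\to\mathrm{Path}(RB)$ with $q_0 H=\eta_B$ and $q_1 H=Rf\circ s$. Since $q_0 i_{RB}=q_1 i_{RB}=\mathrm{id}_{RB}$ with $i_{RB}$ a weak equivalence, two-out-of-three makes $q_0$ and $q_1$ weak equivalences; then $q_0 H=\eta_B$ forces $H$ to be a weak equivalence, whence $Rf\circ s=q_1 H$ is a weak equivalence.

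Finally I would conclude by feeding the three arrows $f\colon A\to B$, $s\colon B\to RA$, $Rf\colon RA\to RB$ into the saturation property: the composites $s\circ f=\eta_A$ and $Rf\circ s$ are both weak equivalences, so two-out-of-six forces each of $f$, $s$, $Rf$ to be a weak equivalence; in particular $f$ is, as required. I expect the main obstacle to be the construction in the third paragraph: one must take the path object of the \emph{fibrant} replacement $RB$ (so that $q_{RB}$ is genuinely a fibration against which $f$ may be lifted) and verify the commutativity of the comparison square, which is exactly where naturality of the fibrant replacement and the identity $s f=\eta_A$ enter. Once that homotopy is in place, every remaining step is a formal application of the closure properties of the weak equivalences.
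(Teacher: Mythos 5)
Your proposal is correct and follows what is essentially the paper's intended argument: the paper states this lemma without proof, citing \cite{GS}, and your path-object argument (lifting $\eta_A$ against the fibration $RA\to\ast$ to obtain $s$ with $s\circ f=\eta_A$, then lifting against the fibration $q_{RB}$ to produce the right homotopy between $\eta_B$ and $Rf\circ s$, and finishing with the closure properties of weak equivalences) is essentially the proof given in that reference. The one non-formal ingredient is your appeal to two-out-of-six, and it is justified exactly as you say: in the situation where the lemma is applied, the weak equivalences of ${\tt DG\cD A}$ are created by $\op{For}$ from the model category ${\tt DG\cD M}$, and weak equivalences in any model category are saturated (a map is a weak equivalence iff it becomes an isomorphism in the homotopy category), so two-out-of-six holds -- this is also how the cited proofs conclude, phrased there as ``$s$ has a left and a right inverse in the homotopy category, hence is a weak equivalence, hence so is $f$ by two-out-of-three.''
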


We think about $\op{Path}(D)\in\tt D$ is an internalized `space' of paths in $D$. In simple cases, $\op{Path}(D)=\h_{\tt D}(I,D)$, where $I\in\tt D$ and where $\h_{\tt D}$ is an internal Hom. Moreover, by fibrant replacement of an object $D\in\tt D$, we mean a weq $D\to \bar{D}$ whose target is a fibrant object.

\subsection{Proof of Condition 1 of Theorem \ref{QTT}}

Let $\zl$ be a non-zero ordinal and let $X:\zl\to \tt C$ be a diagram of type $\zl$ in a category $\tt C$, i.e., a functor from $\zl$ to $\tt C$. Since an ordinal number is a totally ordered set, the considered ordinal $\zl$ can be viewed as a directed poset $(\zl,\le)$. Moreover, the diagram $X$ is a direct system in $\tt C$ over $\zl$ -- made of the $\tt C$-objects $X_\zb$, $\zb<\zl$, and the $\tt C$-morphisms $X_{\zb\zg}:X_\zb\to X_\zg$, $\zb\le\zg$ -- , and the colimit $\colim_{\zb<\zl}X_\zb$ of this diagram $X$ is the inductive limit of the system $(X_\zb,X_{\zb\zg})$.\medskip

Let now $A:\zl\to \tt DG\cD A$ be a diagram of type $\zl$ in $\tt DG\cD A$ and let $\op{For}\circ A:\zl\to \tt DG\cD M$ be the corresponding diagram in $\tt DG\cD M$. If no confusion arises, we denote the latter diagram simply by $A$. As mentioned in the proof of Theorem \ref{FinGenModDGDM}, the colimit of $A$ does exist in $\tt DG\cD M$ and is taken degree-wise in ${\tt Mod}(\cD)$. More precisely, for all $r\in \N$, we denote by $\op{Pr}_r$ the canonical functor ${\tt DG\cD M}\to {\tt Mod}(\cD)$ and consider the diagram $A_r:=\op{Pr}_r\circ A:\zl\to {\tt Mod}(\cD)$. In view of the preceding paragraph, the colimit $\colim_{\zb<\zl}A_{\zb,r}$ of $A_r$ in ${\tt Mod}(\cD)$ is the inductive limit in ${\tt Mod}(\cD)$ of the direct system $(A_{\zb,r}, A_{\zb\zg,r})$. A well-known construction provides the inductive limit $C_r$ of this direct system in $\tt Set$: $$C_r=\coprod_{\zb<\zl} A_{\zb,r}/\sim\;,$$ where $a_{\zb,r}\sim a'_{\zg,r}\,$, if there is $\zd$ such that $A_{\zb\zd,r}\;a_{\zb,r}=A_{\zg\zd,r}\;a'_{\zg,r}$. The set $C_r$ can be made an object $C_r\in{\tt Mod}(\cD)$ in a way such that the projections $\zp_{\zb,r}:A_{\zb,r}\to C_r$ become ${\tt Mod}(\cD)$-morphisms and $C_r$ becomes the colimit in ${\tt Mod}(\cD)$ of $A_r$ -- in particular $\zp_{\zg,r}\,A_{\zb\zg,r}=\zp_{\zb,r}$. Due to universality, there is a ${\tt Mod}(\cD)$-morphism $d_{r}:C_r\to C_{r-1}$ such that $d_r\zp_{\zb,r}=\zp_{\zb,r-1}d_{\zb,r}$, where $d_{\zb,r}$ is the differential $d_{\zb,r}:A_{\zb,r}\to A_{\zb,r-1}$. We thus get a complex $(C_\bullet,d)\in \tt DG\cD M$, together with $\tt DG\cD M$-morphisms $\zp_{\zb,\bullet}:A_{\zb,\bullet}\to C_\bullet$, and this complex is the colimit in $\tt DG\cD M$ of $A$.

We now define on $C_\bullet$ a multiplication $\diamond$ in the usual way: if $c_r\in C_r$ and $c_s\in C_s$, $$c_r\diamond c_s = \zp_{\zb,r}\,a_{\zb,r}\diamond \zp_{\zg,s}\,a'_{\zg,s}=\zp_{\zd,r}\,A_{\zb\zd,r}\,a_{\zb,r}\diamond \zp_{\zd,s}\,A_{\zg\zd,s}\,a'_{\zg,s}$$ $$=\zp_{\zd,r+s}\,\left(A_{\zb\zd,r}\,a_{\zb,r}\star A_{\zg\zd,s}\,a'_{\zg,s}\right)\in C_{r+s}\;,$$ where $\star$ denotes the multiplication of an element in $A_{\zd,r}$ and an element in $A_{\zd, s}$, in the {\small DG} $\cal D$-algebra $A_{\zd,\bullet}\,$. It is straightforwardly checked that $\diamond$ is a well-defined graded commutative unital $\cO$-algebra structure on $C_\bullet\,$, such that the differential of $C_\bullet$ is a degree $-1$ graded derivation of $\diamond$ and that vector fields act as non-graded derivations. Hence, $(C_\bullet,d,\diamond)$ is an object $C_\bullet\in \tt DG\cD A$ and the maps $\zp_{\zb,\bullet}:A_{\zb,\bullet}\to C_\bullet$ are $\tt DG\cD A$-morphisms. It is now easily seen that $C_\bullet$ is the colimit in $\tt DG\cD A$ of $A$.\medskip

Hence, the

\begin{prop}\label{Enrichment} For any ordinal $\zl$, the colimit in $\tt DG\cD A$ of any diagram $A$ of type $\zl$ exists and, if $\zl$ is non-zero, it is obtained as an enrichment of the corresponding colimit in $\tt DG\cD M$: \be\label{EnrichmentEq}\op{For}(\op{colim}_{\zb<\zl}A_{\zb,\bullet})=\op{colim}_{\zb<\zl}\op{For}(A_{\zb,\bullet})\;.\ee\end{prop}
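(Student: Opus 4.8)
The plan is to dispose of the degenerate case $\zl=0$ separately and then, for non-zero $\zl$, to verify that the explicit object $(C_\bullet,d,\diamond)$ assembled in the paragraphs above genuinely satisfies the universal property of the colimit in $\tt DG\cD A$; the enrichment equation (\ref{EnrichmentEq}) then drops out at once. For $\zl=0$ the colimit of the empty diagram is the initial object of $\tt DG\cD A$, which we have already identified as $(\cO,0)$; since the case $\zl=0$ is excluded from (\ref{EnrichmentEq}), nothing further is required there.

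For $\zl\neq 0$ I would take as given, from the preceding construction, the complex $(C_\bullet,d)\in\tt DG\cD M$ together with the $\tt DG\cD M$-morphisms $\zp_{\zb,\bullet}:A_{\zb,\bullet}\to C_\bullet$ exhibiting $C_\bullet$ as the colimit in $\tt DG\cD M$, and the multiplication $\diamond$ that enriches it to an object of $\tt DG\cD A$ for which the $\zp_{\zb,\bullet}$ are $\tt DG\cD A$-morphisms. The key step is then to check the universal property in $\tt DG\cD A$: given $B_\bullet\in\tt DG\cD A$ and a cocone of $\tt DG\cD A$-morphisms $f_\zb:A_{\zb,\bullet}\to B_\bullet$, applying $\op{For}$ yields a cocone in $\tt DG\cD M$, so the universality of $C_\bullet$ in $\tt DG\cD M$ produces a unique $\tt DG\cD M$-morphism $f:C_\bullet\to B_\bullet$ with $f\circ\zp_{\zb,\bullet}=\op{For}f_\zb$. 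It remains to see that this $f$ is automatically a $\tt DG\cD A$-morphism, i.e. that it respects $\diamond$ and the unit, and this is exactly where the definition of $\diamond$ is used. Writing two homogeneous classes with a common representing index as $c_r=\zp_{\zd,r}\,a$ and $c_s=\zp_{\zd,s}\,a'$, with $a\in A_{\zd,r}$ and $a'\in A_{\zd,s}$, so that $c_r\diamond c_s=\zp_{\zd,r+s}(a\star a')$, we obtain
$$f(c_r\diamond c_s)=f_\zd(a\star a')=f_\zd(a)\star_B f_\zd(a')=f(c_r)\star_B f(c_s)\;,$$
using that $f_\zd$ is an algebra morphism; compatibility with the unit is the case $r=s=0$. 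Uniqueness of $f$ as a $\tt DG\cD A$-morphism is inherited from its uniqueness as a $\tt DG\cD M$-morphism, since any $\tt DG\cD A$-morphism satisfying the cocone conditions is in particular such a $\tt DG\cD M$-morphism.

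The enrichment equation (\ref{EnrichmentEq}) is then immediate: the functor $\op{For}$ merely discards the multiplication, so $\op{For}(C_\bullet,d,\diamond)=(C_\bullet,d)$, which is by construction the colimit in $\tt DG\cD M$ of the $\op{For}(A_{\zb,\bullet})$. I expect the real obstacle to lie not in the universal property itself but in the bookkeeping behind the colimit: one must systematically push finitely many elements to a common index $\zd$ -- possible because the poset $(\zl,\le)$ is directed -- both to define $\diamond$ unambiguously on the equivalence classes in $C_r=\coprod_{\zb<\zl}A_{\zb,r}/\!\sim$ and to justify the displayed computation. One must likewise confirm that $\diamond$ is well defined independently of representatives and that $(C_\bullet,d,\diamond)$ is genuinely a DG$\cD$-algebra (associativity and graded commutativity of $\diamond$, the Leibniz rule for $d$, and the derivation property of the $\Theta$-action). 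These compatibilities are precisely the content hidden behind the phrase ``straightforwardly checked'' in the construction.
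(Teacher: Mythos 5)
Your proposal is correct and follows essentially the same route as the paper: the degree-wise direct-limit construction of $(C_\bullet,d)$ in $\tt DG\cD M$, the enrichment by the multiplication $\diamond$ defined on representatives pushed to a common index (using directedness of $(\zl,\le)$, which is precisely what fails for $\zl=0$, where the colimit is instead the initial object $(\cO,0)$). The only difference is one of emphasis: you spell out the universal-property verification --- that the map $f$ induced in $\tt DG\cD M$ automatically respects $\diamond$ and the unit --- which the paper compresses into ``it is now easily seen that $C_\bullet$ is the colimit in $\tt DG\cD A$''.
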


If $\zl$ is the zero ordinal, it can be viewed as the empty category $\emptyset$. Therefore, the colimit in $\tt DG\cD A$ of the diagram of type $\zl$ is in this case the initial object $(\cO,0)$ of $\tt DG\cD A$. Since the initial object in $\tt DG\cD M$ is $(\{0\},0)$, we see that $\op{For}$ does not commute with this colimit. The above proof fails indeed, as $\emptyset$ is not a directed set.\medskip

It follows from Proposition \ref{Enrichment} that the right adjoint $\op{For}$ in (\ref{Ad}) commutes with sequential colimits, so that the first condition of Theorem \ref{QTT} is satisfied.

\subsection{Proof of Condition 2 of Theorem \ref{QTT}}\label{Condition2}

We prove Condition 2 using Lemma \ref{SuffCondFor2}. In our case, the adjoint pair is $${\cal S}:{\tt DG\cD M}\rightleftarrows{\tt DG\cD A}:\op{For}\;.$$ As announced in Subsection \ref{RSDA}, we omit $\bullet$, $\star$, and $\cO$, whenever possible. It is clear that every object $A\in{\tt D}={\tt DG\cD A}$ is fibrant. Hence, we can choose the identity as fibrant replacement functor, with the result that the latter is functorial.\medskip

As for the second condition of the lemma, we will show that {\it any} $\,\tt DG\cD A$-morphism $\zf:A\to B$ naturally factors into a weak equivalence followed by a fibration.\medskip

Since in the standard model structure on the category of differential graded commutative algebras over $\Q$, cofibrations are retracts of relative Sullivan algebras \cite{Hes}, the obvious idea is to decompose $\zf$ as $A\to A\0\cS V\to B$, where $i: A\to A\0\cS V$ is a (split minimal) relative Sullivan $\cD$-algebra, such that there is a projection $p: A\0\cS V\to B$, or, even better, a projection $\ze: V\to B$ in positive degrees. The first attempt might then be to use $$\ze:V=\bigoplus_{n>0}\bigoplus_{b_n\in B_n}\cD\cdot 1_{b_n}\ni 1_{b_n}\mapsto b_n\in B\;,$$ whose source incorporates a copy of the sphere $S^n$ for each $b_n\in B_n$, $n>0\,.$ However, $\ze$ is not a chain map, since in this case we would have $d_Bb_n=d_B \ze 1_{b_n}=0$, for all $b_n$. The next candidate is obtained by replacing $S^n$ by $D^n$: if $B\in {\tt DG\cD M}$, set $$P(B)=\bigoplus_{n>0}\bigoplus_{b_n\in B_n}D^n_{\bullet}\in {\tt DG\cD M}\;,$$ where $D^n_{\bullet}$ is a copy of the $n$-disc $$D^n_{\bullet}: \cdots \to 0\to 0\to \cD\cdot \mathbb{I}_{b_n} \to \cD\cdot s^{-1}\mathbb{I}_{b_n}\to 0\to \cdots\to 0\;.$$ Since $$P_n(B)=\bigoplus_{b_{n+1}\in B_{n+1}}\cD\cdot s^{-1}\mathbb{I}_{b_{n+1}}\oplus \bigoplus_{b_n\in B_n}\cD\cdot \mathbb{I}_{b_n}\;\; (n>0)\quad\text{and}\quad P_0(B)=\bigoplus_{b_1\in B_1}\cD\cdot s^{-1}\mathbb{I}_{b_1}\;,$$ the free non-negatively graded $\cD$-module $P(B)$ is projective in each degree, what justifies the chosen notation. On the other hand, the differential $d_P$ of $P(B)$ is the degree $-1$ square 0 $\cD$-linear map induced by the differentials in the $n$-discs and thus defined on $P_n(B)$ by $$d_P(s^{-1}\mathbb{I}_{b_{n+1}})=0\in P_{n-1}(B)\quad\text{and}\quad d_P(\mathbb{I}_{b_n})=s^{-1}\mathbb{I}_{b_n}\in P_{n-1}(B)\;$$ (see Lemma \ref{DiffGen}). The canonical projection $\ze:P(B)\to B\,$, is defined on $P_n(B)$, as degree 0 $\cD$-linear map, by $$\ze(s^{-1}\mathbb{I}_{b_{n+1}})=d_B(b_{n+1})\in B_n\quad\text{and}\quad\ze(\mathbb{I}_{b_n})=b_n\in B_n\;.$$ It is clearly a $\tt DG\cD M$-morphism and extends to a $\tt DG\cD A$-morphism $\ze:\cS(P(B))\to B$ (see Lemma \ref{MorpGen}).\medskip

We define now the aforementioned $\tt DG\cD A$-morphisms $i:A\to A\0 \cS(P(B))$ and $p:A\0 \cS(P(B))\to B$, where $i$ is a weak equivalence and $p$ a fibration such that $p\circ i=\zf\,.$ We set $i=\id_A\0 1$ and $p=\zm_B\circ (\zf\0 \ze)\,.$ It is readily checked that $i$ and $p$ are $\tt DG\cD A$-morphisms (see Proposition \ref{ProdTensProd}) with composite $p\circ i=\zf\,.$ Moreover, by definition, $p$ is a fibration in $\tt DG\cD A$, if it is surjective in degrees $n>0$ -- what immediately follows from the fact that $\ze$ is surjective in these degrees.\medskip

It thus suffices to show that $i$ is a weak equivalence in $\tt DG\cD A$, i.e., that $$H(i):H(A)\ni [a]\to [a\0 1]\in H\left(A\0{\cal S}(P(B))\right)$$ is an isomorphism of graded $\cD$-modules. Since $\tilde \imath:A\to A\0\cO$ is an isomorphism in $\tt DG\cD M$, it induces an isomorphism $$H(\tilde \imath):H(A)\ni[a]\to [a\0 1]\in H(A\0\cO)\;.$$ In view of the graded $\cD$-module isomorphism $$H(A\0{\cal S}(P(B)))\simeq H( A\0\cO)\oplus H(A\0{\cal S}^{\ast\ge 1}(P(B)))\;,$$ we just have to prove that \be H(A\0{\cal S}^{k\ge 1}(P(B)))=0\;\label{SCond1}\ee as graded $\cD$-module, or, equivalently, as graded $\cO$-module.\medskip

To that end, note that $$0\longrightarrow \ker^{k}{\frak S}\stackrel{\iota}{\longrightarrow}P(B)^{\0 k}\stackrel{\frak S}{\longrightarrow}(P(B)^{\0 k})^{\mathbb{S}_k}\longrightarrow 0\;,$$ where $k\ge 1$ and where $\frak S$ is the averaging map, is a short exact sequence in the Abelian category $\tt DG\cO M$ of differential non-negatively graded $\cO$-modules (see Appendix \ref{InvCoinv}, in particular Equation (\ref{SymOp})). Since it is canonically split by the injection $${\frak I}:(P(B)^{\0 k})^{\mathbb{S}_k}\to P(B)^{\0 k}\;,$$ and $$(P(B)^{\0 k})^{\mathbb{S}_k}\simeq {\cal S}^{k}(P(B))$$ as {\small DG} $\cO$-modules (see Equation (\ref{Alg2})), we get $$P(B)^{\0 k}\simeq \cS^k(P(B))\oplus \ker^k{\frak S}\quad\text{and}\quad A\0 P(B)^{\0 k}\simeq A\0\cS^k(P(B))\,\oplus\, A\0\ker ^k{\frak S}\;,$$ as {\small DG} $\cO$-modules. Therefore, it suffices to show that the {\small LHS} is an acyclic chain complex of $\cO$-modules.\medskip

We begin showing that $\cD=\cD_X(X)$, where $X$ is a smooth affine algebraic variety, is a flat module over $\cO=\cO_X(X)$. Note first that, the equivalence (\ref{ShVsSectqcMod1}) $$\zG(X,\bullet):{\tt qcMod}(\cO_X)\rightleftarrows {\tt Mod}(\cO):\widetilde{\bullet}$$ is exact and strongly monoidal (see remark below Equation (\ref{ShVsSectqcMod1})). Second, observe that $\cD_X$ is a locally free $\cO_X$-module, hence, a flat (and quasi-coherent) sheaf of $\cO_X$-modules, i.e., $\cD_X\0_{\cO_X}\bullet\,$ is exact in ${\tt Mod}(\cO_X)$. To show that $\cD\0_\cO\bullet$ is exact in ${\tt Mod}(\cO)$, consider an exact sequence $$0\to M'\to M\to M''\to 0$$ in ${\tt Mod}(\cO)$. From what has been said it follows that $$0\to \cD_X\0_{\cO_X}\widetilde{M'}\to \cD_X\0_{\cO_X}\widetilde{M}\to \cD_X\0_{\cO_X}\widetilde{M''}\to 0$$ is an exact sequence in ${\tt Mod}(\cO_X)$, as well as an exact sequence in ${\tt qcMod}(\cO_X)$ (kernels and cokernels of morphisms of quasi-coherent modules are known to be quasi-coherent). When applying the exact and strongly monoidal global section functor, we see that $$0\to\cD\0_\cO M'\to \cD\0_\cO M\to \cD\0_\cO M''\to 0$$ is exact in ${\tt Mod}(\cO)$.\medskip

Next, observe that
$$
H(A\0 P(B)^{\0 k})=\bigoplus_{n>0}\bigoplus _{b_n\in B_n} H(D^n_\bullet\0 A \otimes P(B)^{\otimes (k-1)})\;.
$$
To prove that each of the summands of the {\small RHS} vanishes, we apply K\"unneth's Theorem \cite[Theorem 3.6.3]{Wei93} to the complexes $D_\bullet^n$ and $A \otimes P(B)^{\otimes (k-1)}$, noticing that both, $D^n_\bullet$ (which vanishes, except in degrees $n,n-1$, where it coincides with $\cD$) and $d(D^n_\bullet)$ (which vanishes, except in degree $n-1$, where it coincides with $\cD$), are termwise flat $\mathcal{O}$-modules. We thus get, for any $m$, a short exact sequence
\begin{align}\nonumber
0\rightarrow \bigoplus_{p+q=m}H_p(D^n_\bullet)\otimes H_q(A \otimes P(B)^{\otimes (k-1)})&\rightarrow H_m(D_\bullet^n\0 A \otimes P(B)^{\otimes (k-1)})\rightarrow\\ \nonumber
&\bigoplus_{p+q=m-1} \op{Tor}_1(H_p(D_\bullet^n), H_q(A \otimes P(B)^{\otimes (k-1)}))\rightarrow 0\;.
\end{align}
Finally, since $D^n_\bullet$ is acyclic, the central term of this exact sequence vanishes, since both, the first and the third, do.\medskip

To completely finish checking the requirements of Lemma \ref{SuffCondFor2} and thus of Theorem \ref{QTT}, we still have to prove that the factorization $(i,p)=(i(\zf),p(\zf))$ of $\zf$ is functorial. In other words, we must show that, for any commutative $\tt DG\cD A$-square \be\label{InitSq}
\xymatrix{
A\ar[d]^{u} \ar[r]^{\zf}&B\ar[d]^{v\;\;\;,}\\
A'\ar[r]^{\zf'}&B'\\
}
\ee
there is a commutative $\tt DG\cD A$-diagram
\be\label{CompMor00}
\xymatrix{A\;\; \ar[d]_{u} \ar^{\sim}_{i(\zf)}  @{->} [r] & A\0\cS U \ar[d]^{w}\;\;\ar @{->>} [r]_{p(\zf)} & B \ar[d]^{v\;\;\;,}\\
A'\;\; \ar @{->} [r]^{\sim}_{i(\zf')} & A'\0\cS U'\;\; \ar @{->>} [r]_{p(\zf')} & B'\\
}
\ee
where we wrote $U$ (resp., $U'$) instead of $P(B)$ (resp., $P(B')$).

To construct the $\tt DG\cD A$-morphism $w$, we first define a $\tt DG\cD A$-morphism $\tilde{v}:\cS U\to \cS U'$, then we obtain the $\tt DG\cD A$-morphism $w$ by setting $w=u\0 \tilde{v}$.

To get the $\tt DG\cD A$-morphism $\tilde{v}$, it suffices, in view of Lemma \ref{MorpGen}, to define a degree 0 $\tt Set$-map $\tilde{v}$ on $G:=\{s^{-1}\mbi_{b_n},\mbi_{b_n}:b_n\in B_n,n>0\}$, with values in the differential graded $\cD$-algebra $(\cS U',d_{U'})$, which satisfies $d_{U'}\,\tilde{v}=\tilde{v}\,d_U$ on $G$. We set $$\tilde{v}(s^{-1}\mbi_{b_n})=s^{-1}\mbi_{v(b_n)}\in\cS U'\;\,\text{and}\;\,\tilde{v}(\mbi_{b_n})=\mbi_{v(b_n)}\in\cS U'\;,$$ and easily see that all the required properties hold.

We still have to verify that the diagram (\ref{CompMor00}) actually commutes. Commutativity of the left square is obvious. As for the right square, let $t:={a}\0 x_1\odot\ldots\odot x_k\in A\0\cS U$, where the $x_i$ are elements of $U$, and note that $$v\, p(\zf)(t)= v\, (\zm_B\circ (\zf\0 \ze))(t)=v\,\zf({a})\star v\,\ze(x_1)\star\ldots\star v\,\ze(x_k)$$ and $$p(\zf')w(t)=(\zm_{B'}\circ(\zf'\0\ze'))(u({a})\0 \tilde{v}(x_1)\odot\ldots\odot \tilde{v}(x_k))$$ $$=\zf'u({a})\star\,\ze'\, \tilde{v}(x_1)\,\star\,\ldots\,\star\, \ze'\, \tilde{v}(x_k)\;,$$ where $\star$ denotes the multiplication in $B'$. Since the square (\ref{InitSq}) commutes, it suffices to check that \be\label{ComRel}v\,\ze(x)=\ze'\,\tilde{v}(x)\;,\ee for any $x\in U\,.$ However, the $\cD$-module $U$ is freely generated by $G$ and the four involved morphisms are $\cD$-linear: it is enough that (\ref{ComRel}) holds on $G$ -- what is actually the case.

\subsection{Transferred model structure}

We proved in Theorem \ref{FinGenModDGDM} that $\tt DG\cD M$ is a finitely generated model category whose set of generating cofibrations (resp., trivial cofibrations) is \be\label{GenCof1} I=\{\iota_k: S^{k-1}_\bullet \to D^k_\bullet, k\ge 0\}\ee $(\,$resp., \be\label{GenTrivCof1}J=\{\zeta_k: 0 \to D^k_\bullet, k\ge 1\}\;)\;.\ee Theorem \ref{QTT} thus allows to conclude that:

\begin{theo}\label{FinGenModDGDA} The category $\tt DG\mathcal{D}A$ of differential non-negatively graded commutative $\cD$-algebras is a finitely $(\,$and thus a cofibrantly$\,)$ generated model category $(\,$in the sense of \cite{GS} and in the sense of \cite{Hov}$\,)$, with $\cS I=\{\cS \iota_k:\iota_k\in I\}$ as its generating set of cofibrations and $\cS J=\{\cS \zeta_k: \zeta_k\in J\}$ as its generating set of trivial cofibrations. The weak equivalences are the $\tt DG\cD A$-morphisms that induce an isomorphism in homology. The fibrations are the $\tt DG\cD A$-morphisms that are surjective in all positive degrees $p>0$.\end{theo}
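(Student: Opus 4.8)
The plan is to obtain the statement as a direct application of Quillen's transfer theorem (Theorem \ref{QTT}) to the adjoint pair $\cS:{\tt DG\cD M}\rightleftarrows{\tt DG\cD A}:\op{For}$ of (\ref{Ad}). Since ${\tt DG\cD M}={\tt Ch}_+(\cD)$ is, by Theorem \ref{FinGenModDGDM}, a finitely (hence cofibrantly) generated model category with generating cofibrations $I$ and generating trivial cofibrations $J$, the transfer theorem will --- once its two hypotheses are in force --- equip ${\tt DG\cD A}$ with a cofibrantly generated model structure whose generating cofibrations (resp. trivial cofibrations) are $\cS I=\{\cS\iota_k:\iota_k\in I\}$ (resp. $\cS J=\{\cS\zeta_k:\zeta_k\in J\}$), and in which a morphism $f$ is a weak equivalence (resp. fibration) precisely when $\op{For}f$ is one in ${\tt DG\cD M}$. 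Thus essentially the whole content reduces to verifying the two hypotheses of Theorem \ref{QTT} and then translating the transferred classes into the concrete descriptions claimed.

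Condition 1 --- that the right adjoint $\op{For}$ commute with sequential colimits --- is exactly Proposition \ref{Enrichment} together with the remark following it: for a non-zero ordinal, the colimit in ${\tt DG\cD A}$ is computed by enriching the underlying colimit in ${\tt DG\cD M}$, so $\op{For}$ preserves it (the failure for the zero ordinal being irrelevant to sequential colimits). Condition 2 --- that every cofibration with the {\small LLP} against all fibrations be a weak equivalence --- is the content of Subsection \ref{Condition2}, established via Lemma \ref{SuffCondFor2}: every object of ${\tt DG\cD A}$ is fibrant, so the identity is a functorial fibrant replacement, and the functorial factorization $\zf=p\circ i$ with $i:A\to A\0\cS(P(B))$ a weak equivalence and $p:A\0\cS(P(B))\to B$ a fibration supplies, upon specialization to the diagonal, the required natural path objects. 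With both hypotheses in hand, Theorem \ref{QTT} returns a cofibrantly generated model structure on ${\tt DG\cD A}$ in the sense of \cite{GS}, with the announced generating sets.

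It then remains to make the transferred classes explicit and to strengthen \emph{cofibrantly} to \emph{finitely} generated. Since $\op{For}$ leaves the underlying complex of $\cD$-modules --- and hence its degree-wise homology --- unchanged, a ${\tt DG\cD A}$-morphism $f$ has $\op{For}f$ inducing an isomorphism in homology iff $f$ does, and $\op{For}f$ surjective in positive degrees iff $f$ is; this yields the stated descriptions of the weak equivalences and fibrations. For the finiteness upgrade I would invoke the standard fact that a left adjoint carries small objects to small objects whenever its right adjoint preserves the relevant colimits: the domains and codomains of $I$ and $J$ are small in ${\tt DG\cD M}$ (bounded complexes of finitely presented $\cD$-modules, as noted in the proof of Theorem \ref{FinGenModDGDM}), and $\op{For}$ commutes with sequential colimits by Condition 1, so the domains and codomains of $\cS I$ and $\cS J$ are small relative to the transferred maps in ${\tt DG\cD A}$. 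This smallness is precisely what upgrades the raw transfer output to the finitely generated structure in the sense of both \cite{GS} and \cite{Hov}. The genuinely hard work --- the acyclicity computation behind Condition 2, resting on the flatness of $\cD$ over $\cO$ and on K\"unneth --- has already been discharged in Subsection \ref{Condition2}; within the present theorem the only real care needed is this smallness bookkeeping for the generators, which is exactly what separates the finitely generated conclusion from the merely cofibrantly generated one.
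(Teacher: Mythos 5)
Your proposal is correct and follows essentially the same route as the paper: Quillen's transfer theorem applied to the adjunction $\cS\dashv\op{For}$, with Condition 1 supplied by Proposition \ref{Enrichment}, Condition 2 by the factorization of Subsection \ref{Condition2} via Lemma \ref{SuffCondFor2}, and the finiteness upgrade obtained from the $n$-smallness of $D^k_\bullet$, $S^k_\bullet$, $0$ in $\tt DG\cD M$ combined with the adjointness property (\ref{Adjoint}) and the enrichment equation (\ref{EnrichmentEq}). Your appeal to the ``left adjoints preserve smallness when the right adjoint preserves the relevant colimits'' principle is exactly the chain of isomorphisms the paper writes out explicitly after the theorem statement.
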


The cofibrations will be described below.\medskip

Quillen's transfer principle actually provides a \cite{GS} cofibrantly generated (hence, a \cite{Hov} cofibrantly generated) \cite{GS} model structure on $\tt DG\cD A$ (hence, a \cite{Hov} model structure, if we choose for instance the functorial factorizations given by the small object argument). In fact, this model structure is finitely generated, i.e. (see Appendix \ref{CofGenModCat}), the domains and codomains of the maps in $\cS I$ and $\cS J$ are $n$-small $\tt DG\cD A$-objects, $n\in\N$, relative to $\op{Cof}$. Indeed, these sources and targets are $\cS D^k_\bullet$ ($k\ge 1$), $\cS S^k_\bullet$ ($k\ge 0$), and $\cO$. We already observed (see Theorem \ref{FinGenModDGDM}) that $D^k_\bullet$ ($k\ge 1$), $S^k_\bullet$ ($k\ge 0$), and $0$ are $n$-small $\tt DG\cD M$-objects with respect to all $\tt DG\cD M$-morphisms. If $\frak S_\bullet$ denotes any of the latter chain complexes, this means that the covariant Hom functor $\op{Hom}_{\tt DG\cD M}({\frak S}_\bullet,-)$ commutes with all $\tt DG\cD M$-colimits $\colim_{\zb<\zl}M_{\zb,\bullet}$ for all limit ordinals $\zl$. It therefore follows from the adjointness property (\ref{Adjoint}) and the enrichment equation (\ref{EnrichmentEq}) that, for any $\tt DG\cD A$-colimit $\colim_{\zb<\zl}A_{\zb,\bullet}$, we have $$\h_{\tt DG\cD A}(\cS {\frak S}_\bullet,\colim_{\zb<\zl}A_{\zb,\bullet})\simeq \h_{\tt DG\cD M}({\frak S}_\bullet,\op{For}(\colim_{\zb<\zl}A_{\zb,\bullet}))=$$ $$\h_{\tt DG\cD M}({\frak S}_\bullet,\colim_{\zb<\zl}\op{For}(A_{\zb,\bullet}))=\colim_{\zb<\zl}\h_{\tt DG\cD M}({\frak S}_\bullet,\op{For}(A_{\zb,\bullet}))\simeq$$ $$\colim_{\zb<\zl}\h_{\tt DG\cD A}(\cS {\frak S}_\bullet,A_{\zb,\bullet})\;.$$

\subsection{First insight into cofibrations}

The main idea in the above verification of the requirements of the transfer theorem is the decomposition of an arbitrary $\tt DG\cD A$-morphism $\zf:A\to B$ into a trivial `cofibration' $i:A\to A\0\cS U$ and a fibration $p:A\0\cS U\to B$. Indeed, it is implicit in Subsection \ref{Condition2} that the $\tt DG\cD A$-cofibrations are exactly the retracts of the relative Sullivan $\cD$-algebras and that $i$ is a split minimal relative Sullivan $\cD$-algebra. The former result is proven in \cite{BPP2}. The latter is almost obvious. Indeed, \be\label{U}U=P(B)=\bigoplus_{n>0}\bigoplus_{b_n\in B_n}D^n_\bullet\in\tt DG\cD M\ee with differential $d_U=d_P$ defined by \be\label{dU}d_U(s^{-1}\mathbb{I}_{b_n})=0\quad\text{and}\quad d_U(\mathbb{I}_{b_n})=s^{-1}\mathbb{I}_{b_n}\;.\ee Hence, $\cS U\in\tt DG\cD A$, with differential $d_S$ induced by $d_U$, and $A\0 \cS U\in\tt DG\cD A$, with differential \be\label{d}d_1=d_A\0 \id +\id\0 d_S\;.\ee Therefore, $i:A\to A\0\cS U$ is a $\tt DG\cD A$-morphism. Since $U$ is the free non-negatively graded $\cD$-module with homogeneous basis $$G=\{s^{-1}\mathbb{I}_{b_n}, \mathbb{I}_{b_n}:b_n\in B_n, n>0\}\;,$$ all the requirements of the definition of a split minimal {\small RS$\cD$A} are obviously satisfied, except that we still have to check the well-ordering, the lowering, and the minimality conditions.\medskip

Since every set can be well-ordered, we first choose a well-ordering on each $B_n$, $n>0$: if $\zl_n$ denotes the unique ordinal that belongs to the same $\cI$-equivalence class (see Appendix \ref{Small}), the elements of $B_n$ can be viewed as labelled by the elements of $\zl_n$. Then we define the following total order: the $s^{-1}\mathbb{I}_{b_1}$, $b_1\in B_1$, are smaller than the $\mathbb{I}_{b_1}$, which are smaller than the $s^{-1}\mathbb{I}_{b_2}$, and so on ad infinitum. The construction of an infinite decreasing sequence in this totally ordered set amounts to extracting an infinite decreasing sequence from a finite number of ordinals $\zl_1,\zl_1,\ldots,\zl_k$. Since this is impossible, the considered total order is a well-ordering. The lowering condition is thus a direct consequence of Equations (\ref{dU}) and (\ref{d}).\medskip

Let now $\{\zg_\za:\za \in J\}$ be the set $G$ of generators endowed with the just defined well-order. Observe that, if the label $\za$ of the generator $\zg_\za$ increases, its degree $\deg\zg_\za$ increases as well, i.e., that \be\label{Mini}\za\le\zb\quad \Rightarrow\quad \deg \zg_\za\le\deg\zg_\zb\;.\ee Eventually, as announced:

\begin{theo}\label{TrivCof-Fib} Any $\tt DG\cD A$-morphism $\zf:A\to B$ can be functorially decomposed into a weak equivalence $i:A\to A\0\cS U$ (see (\ref{U})), which is a split minimal {\small RS$\cD$A}, and a fibration $p:A\0\cS U\to B$.\end{theo}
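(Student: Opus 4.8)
The plan is to assemble the constructions and verifications already carried out, piece by piece, in Subsection~\ref{Condition2} and in the discussion of cofibrations preceding this statement: each clause of the theorem was in fact established there en route to checking Condition~2 of Theorem~\ref{QTT}, so the proof amounts to collecting these facts and recording that they hold simultaneously for the single factorization $(i,p)$. Concretely, given a $\tt DG\cD A$-morphism $\zf:A\to B$, I would set $U=P(B)$ as in~(\ref{U}), equip it with the differential $d_U$ of~(\ref{dU}), form $\cS U\in\tt DG\cD A$ and $A\0\cS U\in\tt DG\cD A$ with the differential $d_1=d_A\0\id+\id\0 d_S$ of~(\ref{d}), and define $i=\id_A\0 1$ and $p=\zm_B\circ(\zf\0\ze)$, where $\ze:\cS(P(B))\to B$ is the canonical projection. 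By Proposition~\ref{ProdTensProd} these are $\tt DG\cD A$-morphisms with $p\circ i=\zf$.

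First I would dispatch the two easy clauses. That $p$ is a fibration means surjectivity in every degree $n>0$, which is immediate from the fact that $\ze(\mbi_{b_n})=b_n$, so $\ze$, and hence $p$, is already degreewise surjective in positive degrees. That $i$ is a split minimal {\small RS$\cD$A} is the content of the cofibration discussion: $U$ is the free non-negatively graded $\cD$-module on the homogeneous basis $G=\{s^{-1}\mbi_{b_n},\mbi_{b_n}:b_n\in B_n,\,n>0\}$; the total order placing the $s^{-1}\mbi_{b_1}$ below the $\mbi_{b_1}$ below the $s^{-1}\mbi_{b_2}$, and so on, is a well-ordering because no infinite descending chain can be extracted from finitely many ordinals $\zl_1,\ldots,\zl_k$; the lowering condition~(\ref{Lowering}) follows at once from~(\ref{dU}) and~(\ref{d}); the split form~(\ref{split}) of the differential is exactly~(\ref{d}); and minimality~(\ref{minimal}) is the monotonicity~(\ref{Mini}) of degree in the label.

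The substantial clause, and the one I expect to be the main obstacle, is that $i$ is a weak equivalence. Here I would reduce, via the isomorphism of graded $\cD$-modules $H(A\0\cS(P(B)))\simeq H(A\0\cO)\oplus H(A\0\cS^{\ast\ge 1}(P(B)))$ and the fact that $H(\tilde\imath):H(A)\to H(A\0\cO)$ is an isomorphism, to proving the vanishing~(\ref{SCond1}). Splitting off the invariants $\cS^k(P(B))\simeq(P(B)^{\0 k})^{\mathbb{S}_k}$ as a direct summand of $P(B)^{\0 k}$ as {\small DG}~$\cO$-modules, it then suffices to show $A\0 P(B)^{\0 k}$ is acyclic. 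This is where the two genuine inputs enter: the flatness of $\cD$ over $\cO$ (proved via the exactness and strong monoidality of $\zG(X,\bullet)$ together with local freeness of $\cD_X$ over $\cO_X$), which guarantees that the discs $D^n_\bullet$ and their boundaries are termwise flat, and the Künneth theorem applied to $D^n_\bullet\0\bigl(A\0 P(B)^{\0(k-1)}\bigr)$, whose outer terms vanish because $D^n_\bullet$ is acyclic, forcing the middle term to vanish too.

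Finally I would record functoriality: given a commutative square~(\ref{InitSq}), one builds $\tilde v:\cS U\to\cS U'$ on generators by $s^{-1}\mbi_{b_n}\mapsto s^{-1}\mbi_{v(b_n)}$ and $\mbi_{b_n}\mapsto\mbi_{v(b_n)}$ (legitimate by Lemma~\ref{MorpGen}, since these intertwine the differentials), sets $w=u\0\tilde v$, and checks commutativity of~(\ref{CompMor00}); the left square is trivial and the right square reduces to the $\cD$-linear relation~(\ref{ComRel}), which need only be verified on the generating set $G$. The only real work, then, is the acyclicity argument for the weak-equivalence clause; everything else is bookkeeping over constructions already in place.
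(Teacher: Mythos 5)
Your proposal is correct and takes essentially the same approach as the paper: the theorem is precisely the assembly of the factorization $(i,p)$ constructed in Subsection~\ref{Condition2} (with $p\circ i=\zf$, the fibration property via surjectivity of $\ze$, the weak-equivalence property via flatness of $\cD$ over $\cO$ plus the K\"unneth argument, and the functoriality check on generators) together with the well-ordering, lowering, split, and minimality verifications from the cofibration discussion. Nothing is missing.
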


For the above-mentioned result on $\tt DG\cD A$-cofibrations, an explicit description of fibrant and cofibrant functorial replacement functors in $\tt DG\cD A$, as well as a model categorical Koszul-Tate resolution, we refer the reader to \cite{BPP2}.

\section{Appendices}

The following appendices do not contain new results but might have a pedagogical value. Various (also online) sources were used. Notation is the same as in the main part of the text.

\subsection{Appendix 1 -- Coherent and quasi-coherent sheaves of modules}\label{FinCondShMod}

A {\bf finitely generated $\cR$-module} is an object $\cP\in{\tt Mod}(\cR)$ that (as a sheaf of modules) is locally generated by a finite number of sections. More precisely, for any $x\in X$, there exists a neighborhood $U\ni x$, and {\it a finite number of sections} $s_1,\ldots,s_n\in\cP(U)$, such that the sheaf morphism $\zf:\cR^n|_U\to \cP|_U$, which is defined, for any $V\in{\tt Open}_U$, by $\zf_V:\cR(V)^n\ni (f^1,\ldots,f^n)\mapsto \sum_if^is_i|_V\in \cP(V)$, is epic. In other words, for any $x\in X$, there is a neighborhood $U\ni x$, an integer $n\in\N$, and an exact sequence of sheaves \be\label{FinGen}\cR^n|_U\to {\cal P}|_U\to 0\;.\ee Indeed, when choosing the definition (\ref{FinGen}), we recover the generating sections as follows. If $e_i=(0,\ldots,0,1,0,\ldots,0)\in\cR(U)^n$, we can set $s_i=\zf_U(e_i)\in\cP(U)$. Then, any tuple $(f^1,\ldots,f^n)\in\cR(V)^n$ reads $\sum_if^ie_i|_V$ and is mapped by $\zf_V$ to $\sum_if^is_i|_V\,.$\medskip

A {\bf finitely presented $\cR$-module} is an object $\cP\in{\tt Mod}(\cR)$ that is locally generated by a finite number of sections, which satisfy a finite number of relations. This means that, for any $x\in X$, there is a neighborhood $U\ni x$, and a finite number of sections $s_1,\ldots,s_n\in\cP(U)$, such that the induced morphism $\zf:\cR^n|_U\to \cP|_U$ is a sheaf epimorphism, and such that the sheaf $$\ker\zf=\ker\lp \cR^n|_U\ni (f^1,\ldots,f^n)\mapsto \sum_if^is_i\in \cP|_U\rp$$ of relations is finitely generated. In other words, for every $x\in X$, there is $U\ni x$, integers $n,m\in\N$, and an exact sequence of sheaves \be\label{FinPre}\cR^{m}|_U\to{\cR}^n|_U\to {\cP}|_U\to 0\;.\ee Of course, finitely presented implies finitely generated.

A {\bf coherent $\cR$-module} is an object $\cP\in{\tt Mod}(\cR)$ that is finitely generated and, for any open subset $U$ and any sections $t_1,\ldots,t_k\in\cP(U)$, the relation sheaf $$\ker\lp\cR^k|_U\ni (f^1,\ldots,f^k)\mapsto \sum_if^it_i\in \cP|_U\rp$$ is finitely generated. This means that $\cP$ is finitely generated and, for any open subset $U$, any $k\in\N$, and any sheaf morphism $\psi:\cR^k|_U\to\cP|_U$, the kernel sheaf \be\label{Coh}\ker\psi=\ker\lp\cR^k|_U\to\cP|_U\rp\ee is finitely generated. The category ${\tt cMod}(\cR)$ of coherent $\cR$-modules is reasonably behaved. It is closed under usual operations such as kernels, cokernels, finite direct sums... Actually, it is a full Abelian subcategory of the Abelian category ${\tt Mod}(\cR)$ (an Abelian subcategory $\tt S$ of an Abelian category $\tt C$ is a subcategory that is Abelian and such that any exact sequence in $\tt S$ is also exact in $\tt C$). Moreover, coherent always implies finitely presented, and, if $\cR$ is coherent (as module over itself), an arbitrary $\cR$-module is coherent if and only if it is finitely presented.\medskip

A {\bf quasi-coherent $\cR$-module} is an object $\cP\in{\tt Mod}(\cR)$ that is locally presented, i.e., for any $x\in X$, there is a neighborhood $U\ni x$, such that there is an exact sequence of sheaves
\be\label{QuaCoh}\cR^{K_U}|_{U}\to \cR^{J_U}|_{U}\to \cP|_{U}\to 0\;,\ee where $\cR^{K_U}$ and $\cR^{J_U}$ are (not necessarily finite) direct sums. Let us recall that an infinite direct sum of sheaves need not be a sheaf, so that a sheafification is required. The category ${\tt qcMod}(\cR)$ of quasi-coherent $\cR$-modules is not Abelian in general, but is Abelian in the context of Algebraic Geometry, i.e., if $\cR$ is the function sheaf of a scheme.\medskip

A {\bf locally free (resp., locally finite free, locally free of finite rank $r$) $\cR$-module} is an object $\cP\in{\tt Mod}(\cR)$, such that, for any $x\in X$, there is an open neighborhood $U\ni x$, a set $I$ (resp., a finite set $I$, a finite set $I$ of cardinality $r$), and an exact sequence of sheaves \be\label{LocFree}0\to \cR^I|_U\to\cP|_U\to 0\;.\ee It is clear that locally free implies quasi-coherent. However, any locally finite free $\cR$-module is coherent if and only if $\cR$ is coherent.\medskip

%It may be difficult to check whether $\cR$ is coherent or not. In the following, we consider geometric (mostly algebraic geometric) situations. In such cases, $\cR$ is the function sheaf $\cO=\cO_X$ of a variety or manifold $X$. For instance, if $X$ is a finite-dimensional complex manifold, $\cO_X$ is Noetherian, so coherent. On the other hand, if $X$ is a smooth manifold, then $\cO_X$ is not necessarily coherent; and there are schemes, even affine ones, such that $\cO_X$ is not coherent. But, if $X$ is a locally Noetherian scheme (i.e., a scheme covered by spectra of Noetherian rings), then $O_X$ is coherent. Of course, $\cO_X$ is always quasi-coherent, whatever $X$ is, since $\cO_X$ is locally free over itself.\medskip

The category of locally free modules is not Abelian, basically because cokernels are not locally free. However, if $\cR$ is a function sheaf $\cO_X$, locally free modules ${\tt lfMod}(\cO_X)$ (resp., locally free modules ${\tt lfrMod}(\cO_X)$ of finite rank) sit in broader Abelian categories:

\be\label{SeqAbelCatAlgGeo} {\tt lfMod}(\cO_X)\subset {\tt qcMod}(\cO_X)\subset {\tt Mod}(\cO_X)\;,\ee if $X$ is a scheme, and \be {\tt lfrMod}(\cO_X)\subset {\tt cMod}(\cO_X)\subset {\tt qcMod}(\cO_X)\;,\ee if $X$ is a Noetherian scheme (i.e., a scheme that is finitely covered by spectra of Noetherian rings). % Let us stress that one may run into trouble when considering coherent modules over non-Noetherian schemes.\medskip

\subsection{Appendix 2 -- $\cD$-modules}\label{D-modules}

We already indicated that $\cD$-modules are fundamental in algebraic analysis: they allow to apply methods of homological algebra and sheaf theory to the study of systems of {\small PDE}-s \cite{KS}.\medskip

We first explain the key idea of Proposition \ref{DModFlatConnSh} considering -- to simplify -- total sections instead of sheaves.\medskip

We denote by $\cD$ the ring of differential operators acting on functions of a suitable base space $X$, e.g., a finite-dimensional smooth manifold \cite{Cos}. A {$\cD$-module} $M\in{\tt Mod}(\cD)$ (resp., $M\in{\tt Mod}(\cD^{\op{op}})$) is a left (resp., right) module over the noncommutative ring $\cD$. Since {\it $\cD$ is generated by smooth functions $f\in\cO$ and smooth vector fields $\theta\in\Theta$, modulo the obvious commutation relations between these types of generators, a $\cD$-action on an $\cO$-module $M\in{\tt Mod}(\cO)$ is completely defined if it is given for vector fields, modulo the commutation relations}. More precisely, let $$\cdot\,:\cO\times M\ni(f,m)\mapsto f\cdot m\in M$$ be the $\cO$-action, and let \be\label{FC1}\nabla:\Theta\times M\ni (\theta,m)\mapsto \nabla_\theta m\in M\ee be an $\R$-bilinear `$\Theta$-action'. For $f\in\cO$ and $\theta,\theta'\in\Theta$, we then naturally extend $\nabla$ by defining the action $\nabla_{\theta\theta'}$ (resp., $\nabla_{\theta f}$) of the differential operator $\theta\theta'=\theta\circ\theta'$ (resp., $\theta f=\theta\circ f$) by $$\nabla_{\theta\theta'}:=\nabla_\theta\nabla_{\theta'}$$ (resp., $$\nabla_{\theta f}:=\nabla_\theta(f\,\cdot\,-))\;.$$ Since we thus define the action of an operator as the composite of the actions of the composing functions and vector fields, we get the compatibility condition \be\label{FC2}\nabla_{f\theta}=f\cdot\nabla_\theta\;,\ee and, as $\theta f=f\theta+\theta(f)$ (resp., $\theta \theta'=\theta'\theta+[\theta,\theta']$) -- where $\theta(f)$ (resp., $[\theta,\theta']$) denotes the Lie derivative $L_\theta f$ of $f$ with respect to $\theta$ (resp., the Lie bracket of the vector fields $\theta,\theta'$) -- , we also find the compatibility relations \be\label{FC3}\nabla_{\theta}(f\,\cdot\,-)=f\cdot \nabla_{\theta}+\theta(f)\,\cdot\,-\,\ee (resp., \be\label{FC4}\nabla_\theta\nabla_{\theta'}=\nabla_{\theta'}\nabla_\theta+\nabla_{[\theta,\theta']})\;.\ee In view of Equations (\ref{FC1}) -- (\ref{FC4}), {\it a $\cD$-module structure on $M\in{\tt Mod}(\cO)$ is the same as a flat connection on $M$}. Note that we implicitly worked out a left $\cD$-module structure. There is a similar result for right $\cD$-modules.\medskip

When resuming now our explanations given in Subsection \ref{D-ModulesAlgebras}, we understand that a morphism $\nabla$ of sheaves of $\K$-vector spaces satisfying the conditions (1) -- (3) is exactly a family of $\cD_X(U)$-modules $\cM_X(U)$, $U\in{\tt Open}_X$, such that the $\cD_X(U)$-actions are compatible with restrictions, i.e., is exactly a $\cD_X$-module structure on the considered sheaf $\cM_X$ of $\cO_X$-modules.\medskip

As concerns {\it examples}, it follows from what has been said that $\cO\in{\tt Mod}(\cD)$ with action $\nabla_\theta=L_\theta$, that top differential forms $\zW^{\op{top}}\in{\tt Mod}(\cD^{\op{op}})$ with action $\nabla_\theta=-L_\theta$, and that $\cD\in{\tt Mod}(\cD)\cap{\tt Mod}(\cD^{\op{op}})$ with action given by left and right compositions.

\subsection{Appendix 3 -- Sheaves versus global sections}\label{ShVsGlobSec}

In Classical Differential Geometry, the fundamental spaces (resp., operators), e.g., vector fields, differential forms... (resp., the Lie derivative, the de Rham differential...) are sheaves (resp., sheaf morphisms). Despite this sheaf-theoretic nature, most textbooks present Differential Geometry in terms of global sections and morphisms between them. Since these sections are sections of vector bundles (resp., these global morphisms are local operators), restriction and gluing is canonical (resp., the existence of smooth bump functions allows to localize the global morphisms in such a way that they commute with restrictions; e.g., for the de Rham differential, we have $$(d|_U\zw_U)|_V=\lp d(\za_V\zw_U)\rp|_V\quad{\text{and}}\quad d|_U\zw|_U=(d\zw)|_U\;,$$ where $\za_V$ is a bump function with constant value 1 in $V\subset U$ and support in $U$). Such global viewpoints are not possible in the real-analytic and holomorphic settings, since no interesting analytic bump functions do exist.\medskip

There is a number of well-known results on the equivalence of categories of sheaves and the corresponding categories of global sections, essentially when the topological space underlying the considered sheaves is an affine scheme or variety.\medskip

If $(X,\cO_X)$ is an affine algebraic variety, there is an equivalence \be\label{ShVsSectcMod} \zG(X,\bullet): {\tt cMod}(\cO_X)\rightleftarrows {\tt fMod}(\cO_X(X)):\cO_X\0_{\cO_X(X)}\bullet\;,\ee between the category ${\tt cMod}(\cO_X)$ of coherent $\cO_X$-modules and the category ${\tt fMod}(\cO_X(X))$ of finitely generated $\cO_X(X)$-modules \cite{Serre}. Similarly, for an affine scheme $(X,\cO_X)$, we have an equivalence \cite{Har} \be\label{ShVsSectqcMod}\zG(X,\bullet): {\tt qcMod}(\cO_X)\rightleftarrows {\tt Mod}(\cO_X(X)):\widetilde{\bullet}\ee between the category of quasi-coherent $\cO_X$-modules and the category of $\cO_X(X)$-modules. If $\cO_X(X)$ is Noetherian, the same functors define an equivalence between ${\tt cMod}(\cO_X)$ and ${\tt fMod}(\cO_X(X))$, just as in the case of an affine algebraic variety (the coordinate ring of an affine variety is Noetherian). It can easily be seen that the functors $\cO_X\0_{\cO_X(X)}\bullet$ and $\widetilde{\bullet}$ are isomorphic. \medskip

There exist similar equivalence results for locally free sheaves \cite{Serre}. Over an affine algebraic variety $X$ or an affine scheme $X=\op{Spec}R$, with $R$ Noetherian, the global section functor $\zG(X,\bullet)$ yields an equivalence of categories \be {\tt lfrMod}(\cO_X)\rightleftarrows {\tt pfMod}(\cO_X(X))\label{ShVsSectlfrMod}\;\ee between the category of locally free $\cO_X$-modules of finite rank and the category of projective finitely generated modules over $\cO_X(X)=R$.

Locally free sheaves of $\cO_X$-modules are viewed as algebraic vector bundles over $X$. However, many authors switch tacitly between locally free sheaves ${\tt lfrMod}(\cO_X)$ of $\cO_X$-modules of finite rank and finite rank vector bundles ${\tt rVB}_X$ over $X$, also in contexts where both concepts are independently defined. For a proof of this equivalence of categories over a premanifold $X$ (i.e., a smooth manifold that is not necessarily Hausdorff and second countable), see for instance \cite[Prop. 5.14]{Wed}.

In 1962, Swan proved a theorem similar to (\ref{ShVsSectlfrMod}) over a compact Hausdorff space $X$. The global continuous section functor induces an equivalence of categories \be {\tt rC^0VB}_X\rightleftarrows {\tt pfMod}(C^0_X(X))\label{Swan}\;\ee between the category of finite rank topological vector bundles on $X$ and the category of projective finitely generated modules over the continuous function algebra $C^0_X(X)$. In 2003, Nestruev \cite{Nes} extended this result to smooth manifolds: if $X$ is a smooth manifold, the global smooth section functor provides an equivalence of categories \be {\tt rC^\infty VB}_X\rightleftarrows {\tt pfMod}(C^\infty_X(X))\label{Swan}\;\ee between the category of finite rank smooth vector bundles on $X$ and the category of projective finitely generated modules over the smooth function algebra $C^\infty_X(X)$.

\subsection{Appendix 4 -- Model categories}\label{ModCat}

% {\bf Since the combination of Hovey's transfinite and Goerss-Schemmerhorn's finite standpoints is not really recommendable, we should envisage sticking to a unique, probably Hovey's, viewpoint.}

Model categories are a setting in which Homotopy Theory is well-developed. Homotopy Theory allows identifying, say, topological spaces, which, although not homeomorphic, still look similarly. Indeed, identification requirements that are weaker than homeomorphisms do exist. For instance, two {\bf homotopy equivalent} (resp., {\bf weakly homotopy equivalent}) spaces, i.e., two spaces related by two continuous maps that are inverses up to $C^0$-homotopies (resp., two spaces related by a continuous map that induces isomorphisms between all homotopy groups), are said to have the same homotopy type (resp., the same weak homotopy type): they may be considered as having the same basic shape.\medskip

Similar concepts exist in homological algebra. Since we study modules via their resolutions (chain complexes whose homology is the module under investigation), we are often not interested in the complex itself, but rather in its homology. Hence, we study complexes up to {\bf quasi-isomorphisms} (chain maps that induce an isomorphism in homology), or up to {\bf homotopy equivalences} (two chain maps that are inverses up to chain homotopies). In the homotopy category, we identify homotopic chain maps, so that homotopy equivalent chain complexes become isomorphic in the homotopy category. In the derived category, we then still quotien out the quasi-isomorphisms. The quasi-isomorphisms of homological algebra correspond to the weak homotopy equivalences of topology.\medskip

The notions of weak homotopy equivalence or quasi-isomorphism underly the axiomatic definition of a {\bf model category}. A model category, by definition, contains a class of morphisms called {\bf weak equivalences}, and these morphisms become isomorphisms upon passing to the associated homotopy category.\medskip

Let us first recall that a {\bf functorial factorization} in a category $\tc$ is a pair $(F,G)$ of endofunctors of the category ${\tt Map\,C}$ of maps of $\tt C$, such that any $f\in {\tt Map\,C}$ reads $f=G(f)\circ F(f)\,$. Hence, if $f:X\to Y$, we have $$X\stackrel{F(f)}{\longrightarrow} Z\stackrel{G(f)}{\longrightarrow} Y\;.$$ The action of the functor $F$ (resp., $G$) on a morphism $(u,v)$ between $f:X\to Y$ and $g:X'\to Y'$, i.e., on a commutative square
$$
\xymatrix{
X\ar[d]^{u} \ar[r]^{f}&Y\ar[d]^{v\;\;\;\;,}\\
X'\ar[r]^{g}&Y'\\
}
$$
is a morphism between $F(f)$ (resp., $G(f)$) and $F(g)$ (resp., $G(g)$). By functorial factorization we mean that there is a commutative diagram
$$
\xymatrix{X \ar[r]^{F(f)} \ar[d]_{u} & Z\ar[r]^{G(f)} \ar[d]^{w}&Y \ar[d]^{v\;\;\;\;.}\\
X'\ar[r]^{F(g)}& Z'\ar[r]^{G(g)}&Y'\\
}
$$

\begin{defi} A {\bf model category} is a category ${\tt M}$ together with three classes of morphisms, {\bf weak equivalences} (weq-s for short), {\bf fibrations}, and {\bf cofibrations}, and with two functorial factorizations $(\za,\zb)$ and $(\za',\zb')$, which satisfy the following axioms:
\begin{itemize}
\item {\bf MC1} (Limit axiom). The category ${\tt M}$ is closed under small limits and colimits.
\item {\bf MC2} (Retract axiom). The three classes of morphisms are closed under retracts. More precisely, if $f$ is a retract of $g$, i.e., if we have a commutative diagram as in Figure 1,
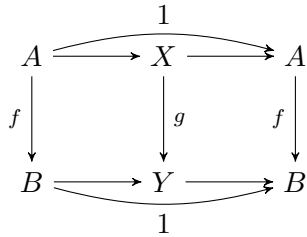
\begin{figure}[h]
\begin{center}
\begin{tikzpicture}
  \matrix (m) [matrix of math nodes, row sep=3em, column sep=3em]
    {  A & X & A \\
       B & Y & B \\ };
 \path[->]
 (m-1-1) edge node[below] {}  (m-1-2)
 (m-1-2) edge node[below] {} (m-1-3)
 (m-1-1) edge[bend left=15] node[above] {1} (m-1-3)
 (m-2-1) edge node[auto] {} (m-2-2)
 (m-2-2) edge node[auto] {} (m-2-3)
 (m-2-1) edge[bend right=15] node[below] {1} (m-2-3)
 (m-1-1) edge  node[left] {$\scriptstyle{f}$} (m-2-1)
 (m-1-2) edge  node[right] {$\scriptstyle{g}$} (m-2-2)
 (m-1-3) edge  node[left] {$\scriptstyle{f}$} (m-2-3);

 %(m-1-4) edge  node[right] {$\scriptstyle{\phi_{-1}}$} (m-2-3);
\end{tikzpicture}\label{fig retract}\caption{Retract diagram}
\end{center}
\end{figure} and if $g$ belongs to one of the morphism classes, then $f$ belongs to the same class.

\item {\bf MC3} (2 out of 3 axiom). If $f$ and $g$ are two composable morphisms, and two of the maps $f,g$, and $g\circ f$ are weak equivalences, then so is the third.
\item {\bf MC4} (Lifting axiom). In a commutative diagram as in Figure 2, where
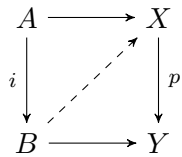
\begin{figure}[h]
\begin{center}
\begin{tikzpicture}
  \matrix (m) [matrix of math nodes, row sep=3em, column sep=3em]
    {  A & X  \\
       B & Y  \\ };
 \path[->,dashed]
 (m-2-1) edge  node[right] {$\scriptstyle$} (m-1-2);
 \path[->]
 (m-1-1) edge  (m-1-2);
 \path[->]
 (m-1-1) edge  node[left] {$\scriptstyle{i}$} (m-2-1);
 \path[->]
 (m-1-2) edge  node[right] {$\scriptstyle{p}$} (m-2-2);
 \path[->]
 (m-2-1) edge  (m-2-2);
\end{tikzpicture}\caption{Lifting diagram}\label{fig_lifting}
\end{center}
\end{figure} $i$ is a cofibration and $p$ a fibration, the lifting exists, if either $i$ or $p$ is trivial (a {\bf trivial (co)fibration} is a (co)fibration that is also a weak equivalence).

\item {\bf MC5} (Factorization axiom). For any morphism $f$, $\za(f)$ is a cofibration, $\zb(f)$ is a trivial fibration, $\za'(f)$ is a trivial cofibration, and $\zb'(f)$ is a fibration.
\end{itemize}
\end{defi}

Several comments are necessary.

Various definitions of model categories can be found in the literature. The above one is used by Hovey \cite{Hov}.

In fact, model categories were introduced by Quillen under the name of closed model categories. Quillen initially assumed only that all finite limits and colimits exist in a model category. Dwyer-Spalinski \cite{DS} use this axiom, Gel'fand-Manin \cite{GM} even assume only the existence of finite projective and inductive limits. The existence of all {\bf small limits and colimits} is required for instance in the texts of Goerss-Schemmerhorn \cite{GS} and Hovey.

Moreover, the factorization axiom {\small MC5} asks that any map can be factored into a cofibration followed by a trivial fibration, and into a trivial cofibration followed by a fibration. These factorizations are not always required to be functorial (see for instance Dwyer-Spalinski and Goerss-Schemmerhorn). However, if the model category is cofibrantly generated (see below), the `cofibration -- trivial fibration' and `trivial cofibration -- fibration' factorizations can be constructed by means of the small object argument (see below), and these factorizations are functorial. Hence, in most model categories, {\bf functorial factorizations} do exist. Hovey's axiom {\small MC5} not only asks that there exist two functorial factorizations, but assumes that a {\bf choice} has been made: his model categories come equipped with two such functorial factorizations.\medskip

Since a model category {\tt M} has all small limits and colimits, it has a terminal object $\star$ and an initial object $\emptyset$. As mentioned, a model category does not only contain weq-s $\stackrel{\sim}{\rightarrow}$, but has additional structure: cofibrations $\rightarrowtail$ (`good injections') and fibrations $\twoheadrightarrow$ (`good surjections'). Further, there are cofibrant and fibrant objects (`good objects'). We say that $Y\in {\tt M}$ is a {\bf cofibrant object} (resp., a {\bf fibrant object}), if the unique map $i_Y:\emptyset \to Y$ (resp., $p_Y:Y\to \star$) is a cofibration (resp., a fibration). Let now $Y\in{\tt M}$ be any object, i.e., not necessarily cofibrant or fibrant. If we apply the functorial factorization $(\za,\zb)$ to $i_Y$, we get a cofibration $\za(i_Y):\emptyset \rightarrowtail QY$ and a trivial fibration $q_Y:=\zb(i_Y):QY\stackrel{\sim}{\twoheadrightarrow} Y$. We refer to $QY$ as a {\bf cofibrant replacement} of $Y$ (`resolution', `{\small CW}-replacement'). A {\bf fibrant replacement} is defined dually.

If we say that $Q$ is a functor, we claim that, for any arrow $f:Y\to Y'$, we get an arrow $Qf:QY\to QY'$, such that composition and the units are respected. If we say that $Q$ is a functorial replacement functor, we claim that the replacement $QY \stackrel{\sim}{\twoheadrightarrow}Y$ is functorial, i.e., that $Q$ is a functor and that the functors $Q$ and $\id$ are related by a natural transformation, i.e., that the following diagram commutes: $$\begin{array}{ccccccccc} & QY & \stackrel{q_Y}{\twoheadrightarrow} & Y & \\Qf  & \downarrow &  & \downarrow & f\\ & QY' & \stackrel{q_{Y'}}{\twoheadrightarrow} & Y' &\end{array}\;.$$ Since we are given a functorial `cofibration -- trivial fibration' factorization $(\za,\zb)$, we actually get such a {\bf functorial cofibrant replacement functor}. Indeed, any $f:Y\to Y'$ induces a commutative square
$$\begin{array}{ccccc}& \emptyset &\stackrel{i_Y}{\longrightarrow} & Y &\\ \op{id} & \downarrow & & \downarrow & f\\ & \emptyset & \stackrel{i_{Y'}}{\longrightarrow} & Y' &\end{array}\;,$$ in view of the definition of an initial object. This square implements a commutative diagram
\be\label{FunReplFun}\begin{array}{ccccccccc}& \emptyset &\stackrel{\za(i_Y)}{\rightarrowtail} && QY & \stackrel{q_Y=\zb(i_Y)}{\twoheadrightarrow} & Y & \\ \op{id} & \downarrow & & w&\downarrow &  & \downarrow & f\\ & \emptyset &\stackrel{\za(i_{Y'})}{\rightarrowtail}  && QY' & \stackrel{q_{Y'}=\zb(i_{Y'})}{\twoheadrightarrow} & Y' &\end{array}\;.\ee We set $Qf:= w$, so that $Q$ becomes an endofunctor. As the {\small RHS} square in the last diagram commutes, $q$ is actually a natural transformation between $Q$ and $\id$, and $Q$ is a functorial cofibrant replacement functor. The {\bf functorial fibrant replacement functor} $R$ is defined similarly from $(\za',\zb')$.\medskip

Let us still mention three basic results on model categories that will be used implicitly.

\begin{enumerate}
\item Two of the distinguished classes determine the third.
\item All three distinguished classes are closed under composition.
\item A functorial cofibrant or fibrant replacement functor respects weq-s.
\end{enumerate}

Indeed, it can be shown that the class $\op{Cof}$ of all cofibrations is not only contained in the class $\op{LLP(TrivFib)}$ of those maps that have the left lifting property with respect to all trivial fibrations, but that \be\label{Cof1}\op{Cof}=\op{LLP(TrivFib)}\;.\ee Similarly weq-s and cofibrations determine fibrations: \be\label{Fib1}\op{Fib}=\op{RLP(TrivCof)}\;.\ee Moreover, \be\label{TrivCofFib1}\op{TrivCof}=\op{LLP(Fib)}\quad\text{and}\quad\op{TrivFib}=\op{RLP(Cof)}\;.\ee
The 2 out of 3 axiom shows that $f$ is a weq if and only if $\za(f)$ is a trivial cofibration, or, if and only if $\zb'(f)$ is a trivial fibration. Hence, cofibrations and fibrations determine weq-s.\medskip

It follows from the characterizations (\ref{Cof1}), (\ref{Fib1}), and (\ref{TrivCofFib1}) that all three distinguished classes are closed under composition. Further, Diagram (\ref{FunReplFun}) shows that the functorial cofibrant replacement functor $Q$ transforms a weq into a weq.

\subsection{Appendix 5 -- Smallness}\label{Small}

In the following, we use the {\bf axiom of choice}, which claims that, for any family $(S_i)_{i\in I}$ of non-empty sets, we can choose a unique element $(s_i)_{i\in I}$ in each set. Although there is no proof of the existence in whole generality of a choice function $f(S_i)=s_i$, for all $i\in I\,,$ the axiom of choice is accepted by a majority of mathematicians.

\subsubsection{Ordinals}

A {\bf well-ordered set} is totally ordered set that does not contain any infinite decreasing sequence. An {\bf order-isomorphism} between well-ordered sets $W_1,W_2$ (the notion can even be defined between partially ordered sets) is a bijection $f:W_1\to W_2$ such that $x\le y$ if and only if $f(x)\le f(y)$. This condition is equivalent to asking that $f$ and $f^{-1}$ be strictly increasing. Being order-isomorphic is an equivalence $\cI$ in well-ordered sets. The $\cI$-equivalence classes are the {\bf ordinal numbers}. More precisely, each well-ordered set is order-isomorphic to a unique ordinal number, which is then identified with its equivalence class.\medskip

The finite ordinals are the non-negative integers. An ordinal can be viewed as the well-ordered set of all (strictly) smaller ordinals: \be\label{FinOrd}0=\emptyset, 1=\{0\}, \ldots, n=\{0<1<\ldots<n-1\}, \ldots, \zw=\{0<1<\ldots\}\;.\ee The ordinal $\zw$ is the first non-finite ordinal. The next non-finite ordinals are \be\label{InfOrd}\zw, \zw + 1, \zw + 2, \ldots, \zw\cdot 2, \zw\cdot 2 + 1,\ldots, \zw^2, \ldots, \zw^3, \ldots, \zw^\zw, \ldots\;\ee We also often think of an ordinal as a category where there is a unique map from $\za$ to $\zb$ if and only if $α\za \le \zb\,$.\medskip

In the list (\ref{FinOrd}), $\zw$ is a limit ordinal and all other non-zero ordinals are successor ordinals. Actually, any ordinal is either {\bf zero}, or a {\bf successor ordinal}, or a {\bf limit ordinal}. More precisely, a {\bf limit ordinal} is an ordinal $\za$ such that there exists an ordinal $\zb<\za$, and whenever $\zb<\za$ there exists an ordinal $\zg$ such that $\zb<\zg<\za$. Although not every ordinal is a successor, every ordinal has a successor.

\subsubsection{Cardinals}

The {\bf well-ordering theorem}, which is equivalent to the axiom of choice, states that every set can be well-ordered. \medskip

Two sets $S_1,S_2$ are {\bf equinumerous} if there exists a bijection $f:S_1\to S_2$ between them. Being equinumerous is an equivalence $\cE$ in the class of sets. The {\bf cardinality of a set} is its $\cE$-equivalence class. For any given set, we can choose well-orderings, thus obtaining well-ordered sets. The latter are equinumerous since they have the same underlying set. Each one of these well-ordered sets is order-isomorphic to a unique ordinal. However, these ordinals, although equinumerous and thus of same cardinality, are not necessarily order-isomorphic, i.e., they are potentially different. We identify the $\cE$-equivalence class of a set with the smallest ordinal in this class: the cardinality of a set thus coincides with this smallest ordinal. The cardinalities of sets are the {\bf cardinal numbers}.\medskip

The finite cardinals are the cardinalities of the finite sets, i.e., they are the non-negative integers. The first non-finite cardinal is the cardinality $\aleph_0$ of the set $\N$ of natural numbers (infinite countable set). The cardinality of the set $\R$ of real numbers (infinite uncountable set) is $2^{\aleph_0}$, i.e., it is equal to the cardinality of the set of all subsets of $\N$. The {\bf continuum hypothesis} says that there is no set whose cardinality $\aleph_1$ is strictly between the cardinality $\aleph_0$ of the integers and the cardinality $2^{\aleph_0}$ of the real numbers: $\aleph_1 = 2^{\aleph_0}$. The non-finite or transfinite cardinals are denoted by \be\label{InfCard}\aleph_0, \aleph_1, \aleph_2,\ldots\;\ee As mentioned above, different ordinals may have the same cardinality. For instance, all the ordinals of (\ref{InfOrd}) (uncountably many countably infinite ordinals) have cardinality $\aleph_0$ (unique countably infinite cardinal). According to what we said above, we identify $\aleph_0$ with $\zw$. Further, the set of all countable ordinals constitutes the first uncountable ordinal $\zw_1$, which we identify with $\aleph_1$.

\subsubsection{Filtration of an ordinal with respect to a cardinal}

This concept is the key of all smallness issues that are considered in the following. An {\bf ordinal $\zl$ is filtered with respect to a cardinal $\zk$} (we say also that $\zl$ is $\zk$-filtered), if $\zl$ is a limit ordinal and \be\label{KeySmall}A\subset \zl\;\text{and}\;|A|\le \zk\quad \Rightarrow\quad \sup A < \zl\;,\ee i.e., the supremum of a subset of $\zl$ of cardinality at most $\zk$ is smaller than $\zl$. For instance, let $\zk$ be the cardinal $\zw=\aleph_0$, and let $\zl$ be the limit ordinal $\zw\cdot 2=\{0,1,\ldots,\zw,\zw+1,\ldots\}$. If $A=\{\zw,\zw+1,\ldots\}$, the assumptions are satisfied, but $\sup A=\zw\cdot 2=\zl$, so that $\zl$ is not $\zk$-filtered. {\it The condition `$\zl$ is $\zk$-filtered' is actually a largeness condition for $\zl$ with respect to $\zk$}. {\bf If $\zl$ is $\zk$-filtered for $\zk>\zk'$, then $\zl$ is also $\zk'$-filtered}. For an {\it infinite cardinal} $\zk$ (e.g., for the above considered $\zk=\zw=\aleph_0$), the smallest $\zk$-filtered ordinal is the first cardinal $\zk_1$ larger than $\zk$ (in our example $\zk_1=\aleph_1$; actually any successor cardinal is $\zk$-filtered, but there are also non-cardinal ordinals that are $\zk$-filtered, e.g., $\zk_1\cdot 2$). For a {\it finite cardinal} $\zk$, a $\zk$-filtered ordinal is just a limit ordinal.

\subsubsection{Small objects}

The definition varies from author to author. We stick to the definition of \cite{Hov} and compare it with the definition of \cite{GS}.\medskip

Smallness of an object $A$ is defined with respect to a category $\tt C$ (assumed to have all small colimits), a class of morphisms $W$ in $\tt C$, and a cardinal $\zk$ (that can depend on $A$). The point is that the covariant Hom-functor $${\tt C}(A,\bullet):=\op{Hom}_{\tt C}(A,\bullet)$$ commutes with limits, but usually not with colimits. However, {\it if the considered sequence is sufficiently large with respect to $A$, then commutation may be proven}. More precisely, if $A\in\tt C\,,$ we consider the colimits of all the sequences that are large enough with respect to some cardinal $\zk$ (possibly $\zk(A)$) and have their arrows in $W$, i.e., the colimits of all the $\zl$-sequences with arrows in $W$ for all $\zk$-filtered ordinals $\zl$, and try to prove that the covariant Hom-functor ${\tt C}(A,\bullet)$ commutes with these colimits. In this case, we say that {\bf $A\in\tt C$ is small with respect to $\zk$ and $W$}.\medskip

To provide deeper understanding, we show that any set $A\in\tt Set$ is small with respect to $\zk=|A|$ and all morphisms in $\tt Set$. It suffices to prove that, for any $\zk$-filtered ordinal $\zl$ and any $\zl$-sequence $(X_\zb)_{\zb<\zl}$, the canonical map $$c:\op{colim}_{\zb<\zl}{\tt Set}(A,X_\zb)\ni [f_\zb] \mapsto \zf_\zb\circ f_\zb\in{\tt Set}(A,{\op{colim}_{\zb<\zl}X_\zb)}\;$$ is an isomorphism. Let us recall that a $\zl$-sequence in $\tt Set$ is a colimit respecting functor $\zl\to {\tt Set}$, so that we actually deal with a direct system and a direct limit. This direct limit $\colim_{\zb<\zl}X_\zb$ has a simple construction $\coprod_{\zb<\zl} X_\zb/\sim$, where notation is self-explaining, and so has the {\small LHS} colimit. The canonical map $c$ -- whose existence is guaranteed by the universality property of a colimit -- is then obtained as indicated, where $\zf_\zb$ denotes the map from $X_\zb$ to the direct limit.

Let now $f\in{\tt Set}(A,\op{colim}_{\zb<\zl}X_\zb)$. For any $a\in A$, $f(a)$ is a class that has a representative in some $X_{\zb(a)}$. The idea is to find an $X_\zg$ that contains all the `images'. We have $S:=\{\zb(a):a\in A\}\subset \zl$ and $|S|\le|A|=\zk$. Hence, $\zg:=\sup S<\zl$ and, for any $a$, $f(a)$ is a class with representative in $X_\zg$: $f$ defines a map $\tilde{f}\in{\tt Set}(A,X_\zg)$ and a class $[\tilde f]\in\op{colim}_{\zg<\zl}{\tt Set}(A,X_\zg)$, whose image by $c$ is $\zf_\zg\circ\tilde{f}=f$. It now suffices to prove that $c$ is not only surjective, but also injective. This proof is similar.\medskip

Note that `$A\in\tt C$ is $\zk$-small with respect to $W$' roughly means that a map from $A$ to the colimit of a sequence with arrows in $W$ that is indexed by a $\zk$-filtered ordinal, or, better, from $A$ to a sufficiently long composition of arrows in $W$, factors through some stage of this composition. Further, we were able to prove commutation of ${\tt Set}(A,\bullet)$ with $\zl$-colimits, only because we were allowed to choose $\zl$ sufficiently large with respect to $\zk=|A|\,.$ It follows that, {\bf if $\zk<\zk'$, then $\zk$-smallness implies $\zk'$-smallness} ($\zk'$-filtered ordinals are also $\zk$-filtered).\medskip

Eventually, we say that $A\in\tt C$ is {\bf small relative to $W$}, if it is $\zk$-small relative to $W$ for some cardinal $\zk$, and we say that $A$ is {\bf small}, if it is small relative to all the morphisms of the underlying category $\tt C$. An object in ${\tt C}\cap {\tt D}$ can be small in $\tt C$ but not in $\tt D$.\medskip

In \cite{GS}, `small' (with respect to $W$) means `sequentially small': the covariant Hom-functor commutes with the colimits of the $\zw$-sequences. This requirement is analogous to `$n$-small', i.e., small relative to a finite cardinal $n\in\N$: the covariant Hom-functor commutes with the colimits of the $\zl$-sequences for all limit ordinals $\zl$. In \cite{Hov}, `small' (relative to $W$) means, as just mentioned, $\zk$-small for some $\zk$: the covariant Hom-functor commutes with the colimits of all the $\zl$-sequences for all the $\zk$-filtered ordinals $\zl$. In view of what has been said above, it is clear that $n$-small implies $\zk$-small, for any $\zk>n$.

\subsection{Appendix 6 -- Cofibrantly generated model categories}\label{CofGenModCat}

We give two versions of the definition of a cofibrantly generated model category, a finite definition \cite{GS} and a transfinite \cite{Hov} one.

\subsubsection{Finite definition}

A model category is {\bf cofibrantly generated} \cite{GS}, if there exist {\it sets} of morphisms $I$ and $J$, which generate the cofibrations and the trivial cofibrations, respectively, i.e., more precisely, if there are sets $I$ and $J$ such that

\begin{enumerate}

\item the source of every morphism in $I$ is sequentially small with respect to the class $\op{Cof}$ and $\op{TrivFib}=\op{RLP}(I)\,$,

\item the source of every morphism in $J$ is sequentially small with respect to the class $\op{TrivCof}$ and $\op{Fib}=\op{RLP}(J)\,$.

\end{enumerate}
It then follows that $I$ and $J$ are actually the generating cofibrations and the generating trivial cofibrations: $$\op{Cof}=\op{LLP}(\op{RLP}(I))\quad\text{and}\quad\op{TrivCof}=\op{LLP}(\op{RLP}(J))\;.$$

\subsubsection{Transfinite composition of pushouts}

The transfinite definition of a cofibrantly generated model category contains a transfinite smallness condition, namely that the domains of the maps in $I$ and $J$ are small ($\zk$-small for some fixed $\zk$) relative to ``transfinite compositions of pushouts of arrows in $I$ and $J$'', respectively.\medskip

We briefly explain this smallness requirement \cite{Hov}. Let $\tt C$ be a category that has all small colimits, let $\zl$ be an ordinal, and let $X:\zl\to \tt C$ be a {\bf $\zl$-sequence in $\tt C$}, i.e., a colimit respecting functor from the category $\zl$ to the category $\tt C$. Usually this diagram is denoted by $X_0\to X_1\to\ldots\to X_\zb\to \ldots$ It is natural to refer to the map $$X_0\to \op{colim}_{\zb<\zl}X_\zb$$ as {\bf the composite of the $\zl$-sequence} $X$. If $W$ is a class of morphisms in $\tt C$ (in the following often the set $I$ or the set $J$) and every map $X_\zb\to X_{\zb+1}$, $\zb+1<\zl$, is in $W$, we refer to the composite $X_0\to \op{colim}_{\zb<\zl}X_\zb$ as a {\bf transfinite composition of maps in $W$}.\medskip

Let us also recall that, if we have a commutative square in $\tt C$, the right down arrow is said to be the {\bf pushout} of the left down arrow.\medskip

We now describe a subclass of $\op{LLP}(\op{RLP}(W))$ that will be denoted by $W$-cell. An element of {\bf $W$-cell} is a {\bf transfinite composition of pushouts of arrows in $W$}. In other words, a $\tt C$-map $f: A\to B$ is in $W$-cell, if there is an ordinal $\zl$ and a $\zl$-sequence $X:\zl\to \tt C$ such that $f$ is the composite of $X$ and such that, for each $\zb+1<\zl$, there is a pushout square in which the right down arrow is $X_\zb\to X_{\zb+1}$ and the left down arrow belongs to $W$.

\subsubsection{Transfinite definition}

A model category is {\bf cofibrantly generated} \cite{Hov}, if there exist sets $I$ and $J$ of maps such that

\begin{enumerate}
\item the domains of the maps in $I$ are small ($\zk$-small for some fixed $\zk$) relative to $I$-cell,

\item the domains of the maps in $J$ are small ($\zk$-small for some fixed $\zk$) relative to $J$-cell,

\item $\op{TrivFib}=\op{RLP}(I)\,$, and

\item $\op{Fib}=\op{RLP}(J)\,$.
\end{enumerate}

The condition on the domains of the maps $I\ni i:A\to B$ (resp., $J\ni j:A\to B$) is that there is a cardinal $\zk$, such that ${\tt C}(A,\bullet)$ commutes with the colimits of all the sequences, which are indexed by a $\zk$-filtered ordinal, and whose arrows are in $I$-cell (resp., $J$-cell), i.e., are transfinite compositions of pushouts of arrows in $I$ (resp., $J$).

%(Goerss/Schemmerhorn - paper published in Contemporary Mathematics as Introductory Lecture Series of a Summer School).\medskip
% The definition and the recognition theorem for a cofibrantly generated model category contain the requirement that $I$ and $J$ permit the small object argument, which means that all domains of maps in $I$ and $J$ are small ``relative to transfinite composites of pushouts of arrows in $I$ and $J$, respectively''. The small object argument itself is a transfinite functorial construction of a factorization system (e.g. cof, triv fib; triv cof, fib; $L$, $R$;...)\bigskip

\subsubsection{Small object argument}

The small object argument is a functorial construction of a factorization system (e.g., into a cofibration and a trivial fibration, or, into a trivial cofibration and a fibration).\medskip

\begin{theo}[Small Object Argument] Let $\tt C$ be a category with all small colimits and let $W$ be a set of $\tt C$-maps. If the domains of the maps in $W$ are small $(\,$$\zk$-small for some fixed $\zk$$\,)$ with respect to $W$\text{-\em cell}, there exists a functorial factorization $(\za,\zb)$, such that, for any $\tt C$-map $f:A\to B$, the map $\za(f):A\to C$ $(\,$resp., $\zb(f):C\to B$$\,)$ is in $$W\text{-\em cell}\subset \op{LLP}(\op{RLP}(W))$$ $(\,$resp., in $\op{RLP}(W)$$\,)$.\label{SOA}\end{theo}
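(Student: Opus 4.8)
The plan is to build $(\za,\zb)$ by Quillen's transfinite small object construction, iterating a single cell-attachment step that freely adjoins solutions to every lifting problem posed by the maps of $W$. \textbf{One-step construction.} Given an object $X$ equipped with a map $p\colon X\to B$, let $S$ be the set of all commutative squares
$$
\begin{array}{ccc}
C_g & \to & X\\
\downarrow & & \downarrow\\
D_g & \to & B
\end{array}
$$
whose left edge is some $g\colon C_g\to D_g$ in $W$ and whose right edge is $p$; this is a genuine set because $W$ is a set and $\tt C$ is locally small. Taking the coproduct of the left edges over $s\in S$ and forming the pushout along the induced map $\coprod_{s}C_{g(s)}\to X$ produces an object $X'$, an arrow $X\to X'$ that is a pushout of a coproduct of maps in $W$, and, via the universal property together with the bottom edges $D_{g(s)}\to B$, an induced map $p'\colon X'\to B$ extending $p$.

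\textbf{Transfinite iteration.} Then I would fix once and for all a $\zk$-filtered ordinal $\zl$ and define a $\zl$-sequence $X\colon\zl\to\tt C$ by $X_0=A$, $p_0=f$, by $X_{\zb+1}=X_\zb'$ and $p_{\zb+1}=p_\zb'$ at successor stages, and by $X_\zb=\colim_{\zg<\zb}X_\zg$ (with $p_\zb$ induced by universality) at limit stages. Writing $C=\colim_{\zb<\zl}X_\zb$ and denoting the structure maps by $\zf_\zb\colon X_\zb\to C$, I set $\za(f)\colon A\to C$ to be the composite of this $\zl$-sequence and $\zb(f)=p_\zl\colon C\to B$, so that $\zb(f)\circ\za(f)=f$. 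Since each $X_\zb\to X_{\zb+1}$ is a pushout of a coproduct of arrows in $W$ and is therefore in $W$-cell, the composite $\za(f)$ is a transfinite composition of pushouts of arrows in $W$, i.e. $\za(f)\in W$-cell. The asserted inclusion $W\text{-cell}\subseteq\op{LLP}(\op{RLP}(W))$ then follows from the trivial inclusion $W\subseteq\op{LLP}(\op{RLP}(W))$ together with the standard closure of any class $\op{LLP}(\op{RLP}(W))$ under coproducts, cobase change, and transfinite composition.

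\textbf{Lifting property and functoriality.} The crux is to show $\zb(f)\in\op{RLP}(W)$, and this is the step where the smallness hypothesis is indispensable; I expect it to be the main obstacle. Consider a lifting problem with left edge $g\in W$, top edge $u\colon C_g\to C$, bottom edge $v\colon D_g\to B$, and right edge $\zb(f)$. Because $C_g$ is $\zk$-small relative to $W$-cell and $\zl$ is $\zk$-filtered, the arrow $u$ into the $\zl$-colimit $C$ factors through some stage as $u=\zf_\zb\circ\tilde u$ with $\tilde u\colon C_g\to X_\zb$. The pair $(\tilde u,v)$ is then one of the squares indexing the pushout that builds $X_{\zb+1}$, so that pushout provides a component $D_g\to X_{\zb+1}$; composing it with $\zf_{\zb+1}$ yields a map $\ell\colon D_g\to C$. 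The pushout relation gives $\ell\circ g=\zf_{\zb+1}\circ(X_\zb\to X_{\zb+1})\circ\tilde u=\zf_\zb\circ\tilde u=u$, while compatibility of this component with $p_{\zb+1}$ gives $\zb(f)\circ\ell=p_{\zb+1}\circ(D_g\to X_{\zb+1})=v$, so $\ell$ solves the lifting problem and $\zb(f)\in\op{RLP}(W)$. Finally, $(\za,\zb)$ is functorial because coproducts, pushouts, and colimits are functorial: a morphism $(u,v)\colon f\to f'$ induces compatible maps $X_\zb\to X_\zb'$ at every stage, hence a map $C\to C'$ commuting with both $\za$ and $\zb$, and the common choice of $\zl$ makes this assignment strictly functorial.
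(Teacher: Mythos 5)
Your proof is correct and is exactly the standard transfinite small object argument; the paper itself does not prove Theorem \ref{SOA} but defers to \cite{DS} (and \cite{Hov}), whose proof proceeds just as yours does: the one-step pushout of a coproduct of all lifting squares, iteration over a fixed $\zk$-filtered ordinal $\zl$, smallness of the domains to factor any map $C_g\to \colim_{\zb<\zl}X_\zb$ through some stage, and functoriality from the uniform choice of $\zl$. The only steps worth spelling out are the routine lemmas that a pushout of a coproduct of maps in $W$ is itself in $W$-cell (attach the cells one at a time along a well-ordering of the indexing set) and that a transfinite composition of maps in $W$-cell is again in $W$-cell, which together justify your claims that $\za(f)\in W$-cell and that the sequence $(X_\zb)_{\zb<\zl}$ has arrows in $W$-cell, as required to invoke the smallness hypothesis.
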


If $\tt C$ is a cofibrantly generated model category in the sense of \cite{Hov} and $W=I$ (resp., $W=J$), the theorem shows that there is a functorial factorization $(\za,\zb)$ (resp., $(\za',\zb')$), such that any map $f$ in $\tt C$ factors into a map $\za(f)\in I\text{-cell}\subset \op{Cof}$ and a map $\zb(f)\in \op{TrivFib}$ $(\,$resp., into a map $\za'(f)\in J\text{-cell}\subset\op{TrivCof}$ and a map $\zb'(f)\in \op{Fib}$$\,)$.\medskip

In fact, the functorial factorization of Theorem \ref{SOA} is constructed via a possibly transfinite induction. It turns out that it suffices that the covariant Hom-functors ${\tt C}(A,\bullet)$, obtained from the sources $A$ of the maps in $W$, commutate with the colimits of the $\zl$-sequences, for some $\zk$-filtered ordinal $\zl$. In the case $\zk=n\in\N$, it is customary to choose the smallest $\zl$, i.e., $\zl=\zw$. In other words, it is enough to assume that the sources $A$ be sequentially small. This justifies the finite definition of a cofibrantly generated model category \cite{GS}.\medskip

An excellent explanation of the Small Object Argument can be found in \cite{DS}.\medskip

It is clear that the finite definition \cite{GS} is stronger than the transfinite one \cite{Hov}. First, $n$-smallness implies $\zk$-smallness, and, second, smallness with respect to $\op{Cof}$ (resp., $\op{TrivCof}$) implies smallness with respect to $I$-cell (resp., $J$-cell).\medskip

The model structures we study in the present paper will all be {\it finitely} generated. A {\bf finitely generated model structure} is a cofibrantly generated model structure \cite{Hov}, such that $I$ and $J$ can be chosen so that their sources and targets are $n$-small, $n\in\N$, relative to $\op{Cof}$. This implies in particular that our model structures are cofibrantly generated in the sense of \cite{GS}.

\subsection{Appendix 7 -- Invariants versus coinvariants}\label{InvCoinv}

If $G$ is a (multiplicative) group and $k$ a commutative unital ring, we denote by $k[G]$ the group $k$-algebra of $G$ (the free $k$-module made of all formal finite linear combinations $\sum_{g\in G} r(g)\, g$ with coefficients in $k$, endowed with the unital ring multiplication that extends the group multiplication by linearity).\medskip

In the following, we use notation of Subsection \ref{Adjunction}. Observe that $\0^n_\cO M_\bullet$ is a module over the group $\cO$-algebra $\cO[\mathbb{S}_n]$, where $\mathbb{S}_n$ denotes the $n$-th symmetric group. There is an $\cO$-module isomorphism \be\label{Iso1}{\cal S}_\cO^n M_\bullet=\0_\cO^n M_\bullet/{\cal I}\cap \0_\cO^n M_\bullet\simeq (\0_\cO^n M_\bullet)_{\mathbb{S}_n}:=\0_\cO^n M_\bullet/\langle T-\zs\cdot T\ra\;,\ee where $(\0_\cO^n M_\bullet)_{\mathbb{S}_n}$ is the $\cO$-module of $\mathbb{S}_n$-coinvariants and where the denominator is the $\cO$-submodule generated by the elements of the type $T-\zs\cdot T$, $T\in\0_\cO^n M_\bullet$, $\zs\in\mathbb{S}_n$ (a Koszul sign is incorporated in the action of $\zs$). It is known that, since the cardinality of $\mathbb{S}_n$ is invertible in $\cO$, we have also an $\cO$-module isomorphism \be\label{Iso2}(\0_\cO^n M_\bullet)_{\mathbb{S}_n}\simeq(\0_\cO^n M_\bullet)^{\mathbb{S}_n}:=\{T\in \0_\cO^n M_\bullet: \zs\cdot T=T,\forall \zs\in\mathbb{S}_n\}\;\ee between the $\mathbb{S}_n$-coinvariants and the $\mathbb{S}_n$-invariants. The averaging map or graded symmetrization operator \be\label{SymOp}{\frak S}:\0_\cO^n M_\bullet\ni T\mapsto \frac{1}{n!}\sum_{\zs\in\mathbb{S}_n}\zs\cdot T\in(\0_\cO^n M_\bullet)^{\mathbb{S}_n}\;\ee coincides with identity on $(\0_\cO^n M_\bullet)^{\mathbb{S}_n}$, what implies that it is surjective. When viewed as defined on coinvariants $(\0_\cO^n M_\bullet)_{\mathbb{S}_n}\,$, it provides the mentioned isomorphism (\ref{Iso2}). It is straightforwardly checked that the graded symmetric multiplication $\vee$ on $(\0_\cO^\ast M_\bullet)^{\mathbb{S}_\ast}$, defined by \be\label{Mult2}{\frak S}(S)\vee {\frak S}(T)={\frak S}({\frak S}(S)\0 {\frak S}(T))\;,\ee endows $(\0_\cO^\ast M_\bullet)^{\mathbb{S}_\ast}$ with a {\small DG} $\cD$-algebra structure, and that the $\cO$-module isomorphism \be\label{Alg2}{\cal S}_\cO^\ast M_\bullet\simeq \{T\in \0_\cO^\ast M_\bullet: \zs\cdot T=T,\forall \zs\in\mathbb{S}_\ast\}\ee is in fact a $\tt DG\cD A$-isomorphism.

\subsection{Appendix 8 -- Relative Sullivan algebras}\label{RSKA}

This Appendix contains background information on Rational Homotopy Theory and on relative Sullivan algebras.

\subsubsection{Rational Homotopy Theory}\label{RHT}

The idea of Homotopy Theory is to identify two topological spaces when they have the same basic shape although they may not be homeomorphic. For instance, the first homotopy group is known to encode information about holes. Hence, the suggestion to identify spaces that are weakly homotopy equivalent, i.e., that are related by a continuous map that induces isomorphisms between all homotopy groups.\medskip

Since the difficulty with homotopy groups is torsion, it seems natural to try to eliminate torsion elements. We confine ourselves to simply-connected spaces, i.e., path-connected topological spaces with trivial first homotopy group. Since the higher homotopy groups $\zp_n(X)$, $n>1$, of simply-connected spaces $X$ are Abelian, all homotopy groups $\zp_n(X)$, $n\ge 1$, are $\Z$-modules. Their tensor products $\Q\0_\Z\zp_n(X)$ over $\Z$ with the field $\Q$ of rational numbers are $\Q$-vector spaces. The point is that the natural inclusion $$\zp_n(X)\ni c\mapsto 1\0 c\in \Q\0_\Z \zp_n(X)$$ sends torsion elements to zero. Indeed, if, in $\zp_n(X)$, we have $zc=0$ for some $z\in\Z\setminus\{0\}$, then, in the $\Q$-vector space $\Q\0_\Z \zp_n(X)$, we get $$0=1\0 zc=z\0 c=z(1\0 c)\;,$$ so that $1\0 c=0.$ This observation justifies the replacement of the homotopy groups $\zp_n(X)$ by the $\Q$-vector spaces $\Q\0_\Z\zp_n(X)$. In Rational Homotopy Theory, we then identify simply-connected topological spaces that are related by a weak rational homotopy equivalence, i.e., by a continuous map that induces isomorphisms between all rationalized homotopy groups $\Q\0_\Z\zp_\star(-)\,$.

\subsubsection{Algebraic models}

In view of Section \ref{RHT}, the important category is the homotopy category. The categories of topological spaces and simplicial sets are known to have equivalent homotopy categories. Hence, simplicial sets are purely combinatorial models of the homotopy classes of topological spaces. Kan (1958) constructed algebro-combinatorial models for classical Homotopy Theory: simplicial groups. There exists a spectral sequence for homotopy groups of a simplicial group that starts with the homotopy groups of a simplicial {\small LA}. In Rational Homotopy Theory, {\small DGLA}-s and {\small DGCA}-s were used as models: Quillen (1969) (resp., Sullivan (1977)) associated to certain simply-connected topological spaces a {\small DGLA} (resp., a {\small DGCA}) over $\Q$, which knows about the rational homotopy type of the space.\medskip

More precisely, a simply-connected topological space is called rational, if its homotopy groups are not only $\Z$-modules but $\Q$-vector spaces. It turns out that for any simply-connected space $X$ there is a continuous map $f:X\to X_\Q\,$, whose target is a simply-connected rational space, and which induces isomorphisms between all rationalized homotopy groups. Hence, from the standpoint of Rational Homotopy Theory, it is enough to study simply-connected rational spaces.\medskip

What Quillen actually proved in 1969 is that the homotopy categories of simply-connected rational topological spaces and of connected {\small DGLA}-s over $\Q$ are equivalent.\medskip

Similarly, Sullivan showed in 1977 that there exists a categorical equivalence between the homotopy categories of simply-connected rational topological spaces with finite Betti numbers and of {\small DGCA}-s $(A^\bullet,d)$ over $\Q$, whose cohomology spaces satisfy $H^0(A^\bullet,d)=\Q$, $H^1(A^\bullet,d)=0$, and $H^n(A^\bullet,d)$ is finite-dimensional for any $n$. This equivalence is implemented by an adjoint pair. The left adjoint assigns to any space $X$ of the underlying source category, a {\small DGCA} $(A^\bullet(X),d)$ in the underlying target category. The model $A^{\bullet}(X)$ is usually large, but can be replaced by a smaller one: there is a quasi-isomorphism of {\small DGCA}-s $\zf:(\w V^\bullet,d)\to (A^\bullet(X),d)$, where $\w V^\bullet$ is the free {\small GC} $\Q$-algebra over a graded $\Q$-vector space $V^\bullet=\bigoplus_{i}V^i$ and where $d V^\bullet\subset \w^{\ge 2}V^\bullet$. The {\small DGCA} $(\w V^\bullet,d)$, which is unique up to isomorphism, is the Sullivan minimal model of $X$.

\subsubsection{Sullivan algebras}

A {\bf Sullivan algebra} is a quasi-free {\small DGCA} $(\w V^\bullet,d)$ over a {\small GV} (graded vector space) $V^\bullet$ (over $\Q$ or any other field $\K$ of characteristic zero), whose differential $d$ satisfies some lowering condition. Quasi-free means free as {\small GCA} but not necessarily free as {\small DGCA}. Recall that in Remark \ref{FreeDGDA}, we emphasized that $\cS M_\bullet$ is the free {\small DGCA} over the {\small DGM} (differential graded module) $M_\bullet\,$. The point is that in the case of $\w V^\bullet$, the space $V^\bullet$ is not a {\small DGV}. On the contrary, the mentioned lowering condition requires roughly that $V^\bullet$ be endowed with an increasing filtration $V(\bullet)$, such that $dV(k)\subset \w V(k-1)\,$.\medskip

A Sullivan model of a {\it \small DGCA} $(A^\bullet, d)$ is, as above, a quasi-isomorphism $\zf:(\w V^\bullet,d)\to(A^\bullet,d)$ of {\small DGCA}-s from a Sullivan algebra $(\w V^\bullet,d)$ to $(A^\bullet,d)\,$. {\it Morphisms of {\small DGCA}-s} $f:(B^\bullet,d)\to (A^\bullet,d)$ are modeled by relative Sullivan algebras $(B^\bullet\0 \w V^\bullet,d)\,.$ {\bf Relative Sullivan algebras} generalize Sullivan algebras. The key point for relative Sullivan algebras is similar to the one for Sullivan algebras. Just as in the latter case $d$ does not stabilize $V^\bullet$, it does in the former not stabilize $\w V^\bullet$: whereas $B^\bullet$ is always a sub-{\small DGCA} of the relative Sullivan algebra $(B^\bullet\0\w V^\bullet,d)$, the factor $\w V^\bullet$ is usually not. In particular, a relative Sullivan algebra is mostly not the tensor product of two {\small DGCA}-s.\medskip

Details on relative Sullivan algebras over a field can be found in \cite{FHT} and \cite{Hes}. In Subsection \ref{RSDA} of the present text, we define relative Sullivan algebras over the ring of differential operators.


\begin{thebibliography}{Dillo 83}

\bibitem[BD04]{BD} A. Beilinson and V. Drinfeld, {\em Chiral algebras}, American Mathematical Society Colloquium Publications, {\bf 51}, American Mathematical Society, Providence, RI, 2004.

\bibitem[BPP15b]{BPP2} G. di Brino, D. Pi\v{s}talo, and N. Poncin, {\em Model Categorical Koszul-Tate resolution}, to appear in ArXiv

\bibitem[Cos11]{Cos} K. Costello, {\em Renormalization and Effective Field Theory}, Mathematical Surveys and Monographs Volume, {\bf 170}, American Mathematical Society, 2011.

\bibitem[DS96]{DS} W. G. Dwyer, J. Spalinski, \emph{Homotopy theories anad model categories}, Springer, 1996.

\bibitem[FHT01]{FHT} Y. F\'elix, S. Halperin, J.-C. Thomas, {\em Rational Homotopy Theory}, Graduate Texts in Mathematics, {\bf 205}, Springer, 2001.

\bibitem[GM96]{GM} S. I. Gelfand, Y. I. Manin \emph{Methods of Homological Algebra}, Springer, 1996.

\bibitem[GS06]{GS} P. G. Goerss, K. Schemmerhorn, \emph{Model Categories and Simplicial Methods}, arXiv:math/0609537.

\bibitem[Gro61]{Gro} A. Grothendieck, \emph{\'{E}l\'ements de g\'eom\'etrie alg\'ebrique: II. \'{E}tude globale \'el\'ementaire de quelques classes de morphismes}, Publ. Math. IH\'{E}S {\bf 8}, 1961, 5-222.

\bibitem[Har97]{Har} R. Hartshorne, {\em Algebraic Geometry}, Graduate Texts in Mathematics {\bf 52}, Springer, 1997.

\bibitem[Hes00]{Hes} K. Hess, \emph{Rational Homotopy Theory: A Brief introduction}, 2000.

\bibitem[Hir00]{Hir} P. Hirschhorn, {\em Model Categories and Their Localizations}, Mathematical Surveys and Monographs {\bf 99}, American Mathematical Society, 2000.

\bibitem[HTT08]{HTT} R. Hotta, K. Takeuchi, and T. Tanisaki, {\em $\cD$-Modules, Perverse Sheaves, and Representation Theory}, Progress in Mathematics, {\bf 236}, Birkh\"auser, 2008.

\bibitem[Hov07]{Hov} M. Hovey, \emph{Model Categories}, American Mathematical Society, 2007.

\bibitem[KS90]{KS} M. Kashiwara, P. Schapira, {\em Sheaves on Manifolds}, Springer Science and Business Media, 1990.

\bibitem[KRO07]{Hopf} L. H. Kauffman, D. E. Radford, F. J. Oliveira Souza, {\em Hopf Algebras and Generalizations}, Contemporary Mathematics, {\bf 441}, 2007.

\bibitem[Mum99]{Mum} D. Mumford, {\em The Red Book of Varieties and Schemes}, Lecture Notes in Mathematics {\bf 1358}, Springer, 1999.

\bibitem[Nes03]{Nes} J. Nestruev, {\em Smooth manifolds and observables}, Graduate texts in mathematics {\bf 220}, Springer, 2003.

\bibitem[Pau11]{Paugam} F. Paugam, {\em Histories and observables in covariant field theory}, J. Geo. Phys., {\bf 61 (9)}, 2011, 1675-1702.

\bibitem[PP16]{PP} D. Pi\v{s}talo, N. Poncin, {On four Koszul-Tate resolutions}, to appear in ArXiv, 2016.

\bibitem[Sch12]{Scha} P. Schapira, {\em D-modules}, lecture notes,\\ {\tt http://www.math.jussieu.fr/$\sim$schapira/lectnotes/Dmod.pdf.}

\bibitem[Sch94]{Schn} J.-P. Schneiders {\em An introduction to $\cD$-modules}, Bulletin de la Soci\'et\'e Royale des Sciences de Li\`ege, 1994.

\bibitem[Ser55]{Serre}  J.-P. Serre, {\it Faisceaux Alg\'ebriques Coh\'erents}, Ann. Math., 2nd Ser., {\bf 61 (2)}, 1955, 197-278.

\bibitem[Swa62]{Swan} R. G. Swan, {\em Vector Bundles and Projective Modules}, Transactions of the American Mathematical Society {\bf 105 (2)}, 1962, 264-277.

\bibitem[Wed14]{Wed} T. Wedhorn, {\em Manifolds, sheaves, and cohomology}, University of Paderborn.

\bibitem[Wei93]{Wei93} C. A. Weibel, \emph{An introduction to homological algebra}, Cambridge studies in advanced mathematics, {\bf 38}, Cambridge University Press, {\tt ISBN 0-521-55987-1}.

\end{thebibliography}
\end{document}